\newtheorem{thm}{Theorem}[section]
\newtheorem{theorem}[thm]{Theorem}
\newtheorem{lem}[thm]{Lemma}
\newtheorem{remark}[thm]{Remark}
\newtheorem{lemma}[thm]{Lemma}
\newtheorem{cor}[thm]{Corrollary}
\newtheorem{definition}[thm]{Definition}
\newenvironment{pfoflem}[1]
{\par\vskip2\parsep\noindent{\sc Proof of Lemma\ #1. }}{{\hfill
$\Box$}
\par\vskip2\parsep}
\newcommand{\dominotilings}{X}
\newcommand{\prob}{{\mu}}
\newcommand{\tiles}{\text{Tiles}}
\newcommand{\cycles}{\text{Cycles}}
\newcommand{\set}{B\cap \tilde x_{2^kn}}
\newcommand{\markminusone}{-1}
\newcommand{\markzero}{0}
\newcommand{\markone}{1}
\newcommand{\marktwo}{2}
\newcommand{\markthree}{3}
\newcommand{\markfour}{4}
\newcommand{\markfive}{5}
\newcommand{\marksix}{6}
\newcommand{\markseven}{7}
\newcommand{\markeight}{8}
\newcommand{\newmarkminusone}{\circle{.1}}
\newcommand{\newmarkzero}{{\circle*{.1}}}
\newcommand{\newmarkone}{\circle*{.1}}
\newcommand{\newmarktwo}{\circle{.1}}
\newcommand{\newmarkthree}{\circle{.1}}
\newcommand{\newmarkfour}{\circle{.1}}
\newcommand{\newmarkfive}{\circle*{.1}}
\newcommand{\newmarksix}{\circle{.1}}
\newcommand{\newmarkseven}{\circle*{.1}}
\newcommand{\newmarkeight}{\circle*{.1}}
\newcommand{\colorprob}{{\mathbb P}}
\newcommand{\cyl}{Cyl}
\newcommand{\num} {\# \text{ of domino tilings}}
\newcommand{\agree} {\text{Agree}}
\newcommand{\agreedcolors} {\text{Colors that Agree}}
\newcommand{\colors} {\text{Total Colors}}
\newcommand{\sq} {\text{Square}}
\newcommand{\ann} {\text{Annulus}}
\newcommand{\R}{\mathbb{R}}
\newcommand{\Z}{\mathbb{Z}}
\newcommand{\N}{\mathbb{N}}
\newcommand{\halfz}{\text{Colored Points}_x}
\newcommand{\halfA}{\text{Colored Points}_A}
\newcommand{\halfD}{\text{Colored Points}_D}
\newcommand{\halfDprime}{\text{Colored Points}_{D'}}
\newcommand{\halfB}{\text{Colored Points}_B}
\newcommand{\colorsD}{\text{Colors(D)}}
\newcommand{\be}{\begin{equation}}
\newcommand{\ee}{\end{equation}}
\title{Subshifts of finite type which have completely positive entropy}
\date{\today}
\author{Christopher Hoffman}
\begin{document}
\maketitle
\thanks{\begin{center}In memory of Dan Rudolph, 1949-2010\end{center}}

\abstract{ Domino tilings have been studied extensively for both
their statistical properties \cite{cohn}, \cite{kenyon2},
\cite{fields} and their dynamical properties \cite{bp}.  We
construct a subshift of finite type using matching rules for several
types of dominos.  We combine the previous results about domino
tilings to show that our subshift of finite type has a measure of maximal entropy with which the subshift has completely
positive entropy but is not isomorphic to a Bernoulli shift. }

\section{Introduction}

Subshifts of finite type are a fundamental object of study in
dynamics. A {\bf $\Z^d$ subshift of finite type} is defined by a
finite set $A$ and a finite list of forbidden words
$\text{Forbidden} \subset A^{[-n,n]^d}$.  The state space is $S
\subset A^{\Z^d}$ such that
$$S=\left\{s \in A^{\Z^d}:\  T_w(s) \not \in \text{ Forbidden }\  \forall w \in \Z^d \right\}$$
where the shift maps $T_w: A^{\Z^d} \to A^{\Z^d}$ are defined by
$(T_w(a))_v=a_{v+w}.$

 The {\bf topological entropy}, $k$, of a subshift of
finite type is defined to be
$$h=\lim_{k \to \infty} \frac{\log_2 \text{Admissible($k$)}}{(2k+1)^d}$$
where $\text{Admissible($k$)}$ be the number of words in
$A^{[-k,k]^d}$ that do not contain a forbidden word.

Subshifts of finite type are fundamentally topological objects.
However the study of subshifts of finite type often includes measure-theoretic question. This is possible because for every subshift of
finite type there exists an invariant measure with measure-theoretic entropy equal to
the topological entropy \cite{lindbook}. Using these measures
we can study the ergodic theoretic properties of a subshift of
finite type with respect to its measures of maximal entropy.

The ergodic theoretic properties of one dimensional subshifts of
finite type are well understood. The state space $S$ is non-empty if
and only if it contains periodic points.  Also there is an algorithm
to calculate the topological entropy of a one dimensional subshift
of finite type. Under very mild conditions a one dimensional
subshifts of finite type has a unique measure of maximal entropy.
Finally if a one dimensional subshift of finite type is mixing with
respect to its measure of maximal entropy then it is measurably
isomorphic to a Bernoulli shift.  See \cite{lindbook} for more
details about subshifts of finite type.

In contrast two (and higher) dimensional subshifts of finite type
may have very different behaviors.  In fact none of the properties
listed above necessarily apply to all two dimensional subshifts of
finite type. For instance given an alphabet and a list of forbidden words
it may be a difficult
problem to determine if the state space $S$ is empty or not. In fact there exists subshifts of finite type for
which $S$ is not empty but $S$ contains no periodic points. Because
of this there is no algorithm which can determine whether a subshift
of finite type has nonempty state space
\cite{wangtile}. It can also be difficult to
calculate the topological entropy of a subshift of finite type even
for some of the simplest subshifts of finite type (such as the hard
sphere model).

The measure theoretic properties of two dimensional subshifts of
finite type can also be quite complicated.
 Ledrappier showed that there are $\Z^2$ subshifts of finite type which
 are mixing but not mixing of all orders \cite{L}.  It remains a long
 standing open question as to whether there are actions of $\Z$
 which are mixing but not mixing of all orders.
 Burton and Steif used ideas from statistical
 physics to show that there are strongly irreducible subshifts of
 finite type with multiple measures of maximal entropy, and
 these measures of maximal entropy are not weak mixing \cite{bs}.

One particular subshift of finite type that has been very well
studied is the domino tiling of the plane \cite{bp} \cite{kenyon2}.
We will construct a subshift of finite type that is a variant of the
domino tiling of the plane which we call the {\bf colored domino
tiling}. We make use of some of the results about the domino tiling
of the plane to analyze the ergodic theoretic properties of the
colored domino tiling.  A transformation has {\bf completely
positive entropy} if every nontrivial factor of the transformation
has positive entropy.  We will show that the colored domino tiling
has completely positive entropy but is not isomorphic to a Bernoulli
shift.

The rest of this paper is organized as follows.
 In the next section we review some of the results about the domino tiling.
 In Section \ref{construction} we construct a zero entropy extension of the domino tiling which we call the colored domino tiling.
 In Section \ref{subshift} we construct the subshift of finite type.
 In Section \ref{entropy} we calculate the entropy of the subshift of finite type and identify a measure of maximal entropy.
 In Section \ref{same} we show the connection between the two processes.
 Then in Section \ref{isk} we show that our subshift of finite type has completely positive entropy.
 Finally in Section \ref{notbernoulli} we show that it is not isomorphic to a Bernoulli shift.

We conclude this section with an open question.  The subshift of finite type that we construct has multiple measures of maximal entropy.  We show that with respect to one of them the subshift has completely positive entropy but is not isomorphic to a Bernoulli shift.  With other measures of maximal entropy the subshift is isomorphic to a Bernoulli shift.  This leads us to the question:  Does there exist a subshift of finite type which has a unique measure of maximal entropy and with respect to that measure the subshift has completely positive entropy but is not isomorphic to a Bernoulli shift?  We believe the answer to be yes and that the techniques in this paper could be extended to construct such an example.

\section{Domino Tilings and the Height Function}
\label{domino}

A {\bf domino tiling} is a map $x$ from $(\Z+\frac12)^2 \to
(\Z+\frac12)^2$ such that
\begin{enumerate}
\item $||x(u)-u||_1=1$ for all $u$ and
\item $x(u)=v$ iff $x(v)=u$.
\end{enumerate}
We call this a domino tiling because we can think of this as for
each $u\in (\Z+\frac12)^2$ there is a 2 by 1 domino whose two
squares are centered at $u$ and $x(u)$.

We let $\dominotilings$ be the space of all domino tilings.  There are two natural shift operations on $\dominotilings$ given by
\begin{enumerate}
\item $T_{\text{left}}(x)(u)=x(u+(1,0))-(1,0)$ for all $x \in \dominotilings$ and $u\in (\Z+\frac12)^2$.
\item $T_{\text{down}}(x)(u)=x(u+(0,1))-(0,1)$ for all $x \in \dominotilings$ and $u\in (\Z+\frac12)^2$.
\end{enumerate}


Burton and Pemantle studied the ergodic theoretic properties of the
domino tiling \cite{bp}.

\begin{theorem} \cite{bp} \label{dominoisbernoulli}
There is a unique measure of maximal entropy $\prob$ on $\dominotilings$. The action   $(\dominotilings,\prob,T_{\text{left}},T_{\text{down}})$ is isomorphic to a $\Z^2$ Bernoulli shift.
\end{theorem}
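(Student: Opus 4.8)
The plan is to exploit the exact solvability of the dimer model: realize $\prob$ as a determinantal point process, identify it as the unique slope-zero translation-invariant Gibbs state via the variational principle, and then deduce Bernoullicity from the Ornstein--Weiss theory of $\Z^2$ actions. First I would set up the determinantal structure. Orienting the edges of the square lattice whose vertices are the tile centers $(\Z+\frac12)^2$ and introducing the associated Kasteleyn operator $K$, one represents the dimer (domino) model as a determinantal point process on tiles: for any finite collection of tiles $e_1,\dots,e_m$, the probability that a $\prob$-sample contains all of them equals $\det\bigl[K^{-1}(e_i,e_j)\bigr]_{i,j\le m}$, where $K^{-1}$ is the coupling function, the inverse of $K$ as an operator on $\ell^2((\Z+\frac12)^2)$. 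Because $K$ is translation invariant, $K^{-1}(u,v)$ is given by an explicit Fourier integral over the two-torus and decays like $C/\|u-v\|$. This kernel is the analytic engine for the whole argument, controlling both the entropy and the mixing.

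Next I would identify $\prob$ and prove uniqueness. By the variational principle for $\Z^2$ subshifts of finite type, every measure of maximal entropy is a translation-invariant Gibbs measure for the trivial dimer interaction, and such measures are parametrized by their slope, the mean change of the associated height function per unit lattice step. The specific entropy is a strictly concave function of the slope, maximized uniquely at slope $(0,0)$ by the symmetry of the square lattice, so any maximal entropy measure must have slope zero. The slope-zero translation-invariant Gibbs measure is exactly the determinantal measure above, hence unique; ergodic decomposition then upgrades this to uniqueness among all invariant measures, producing $\prob$, whose entropy equals the classical value $G/\pi$ with $G$ Catalan's constant.

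Finally, for Bernoullicity I would verify the \emph{very weak Bernoulli} property and appeal to the Ornstein--Weiss theorem that a very weak Bernoulli $\Z^2$ process is measurably isomorphic to a Bernoulli shift. The key observation is that conditioning a determinantal process on the configuration outside a large box $B_R$ again yields a determinantal process, whose kernel inside a concentric smaller box $B_r$ (with $r\ll R$) differs from $K^{-1}$ by an error governed by the decay of $K^{-1}$ across the annulus $B_R\setminus B_r$. Converting this kernel estimate into an explicit coupling, one shows that the conditional law on $B_r$ is close in $\bar d$ to the unconditional law, uniformly in the boundary data, and then averages these discrepancies along a F\o lner sequence of boxes to obtain very weak Bernoulli.

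The hardest step will be this last one. The correlation decay here is only polynomial, $K^{-1}(u,v)\sim 1/\|u-v\|$, rather than exponential, so the perturbation bounds on the conditioned kernel and on the resulting determinants must be pushed through quantitatively, and one must show the $\bar d$-discrepancies are small enough to survive averaging along the F\o lner sequence. Manufacturing the coupling of the conditioned and unconditioned processes directly from the determinantal kernel perturbation is the genuine technical crux; everything else is either classical Kasteleyn theory or a soft application of the variational principle together with Ornstein--Weiss.
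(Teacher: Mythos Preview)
The paper does not prove this theorem at all: it is stated with a citation to Burton and Pemantle \cite{bp} and is used as a black box throughout. So there is nothing in the paper to compare your argument against beyond the attribution.

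That said, your sketch is a reasonable outline of how the cited result is actually obtained. Burton and Pemantle work with transfer impedances, which is exactly the determinantal/Kasteleyn structure you describe, and they deduce Bernoullicity via Ornstein--Weiss theory from quantitative correlation-decay estimates on $K^{-1}$. One caveat: your uniqueness argument, phrased in terms of the slope parametrization of translation-invariant Gibbs measures and strict concavity of the surface tension, is really the later viewpoint of Cohn--Kenyon--Propp and Sheffield rather than the original Burton--Pemantle argument; Burton and Pemantle established uniqueness more directly from the explicit local statistics. Either route works, but if you are reconstructing \cite{bp} specifically you should be aware that the slope/variational-principle machinery postdates it. The genuinely delicate point you flag --- passing from $1/\|u-v\|$ kernel decay to a $\bar d$-coupling strong enough for very weak Bernoulli --- is indeed the technical heart, and it is where Burton and Pemantle do the real work; your proposal correctly identifies it but would need the explicit estimates from their paper (or an equivalent) to be complete.
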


The {\bf height function} $h_x$ of a domino tiling $x$ is an integer
valued function on $\Z^2$. It changes by 1 along each edge of the
graph that is on the boundary of a domino and changes by 3 along
each edge of the graph that bisects a domino.

More precisely for a tiling $x$ the height function $h_x:\Z^2 \to \Z$
such that for any $z, z' \in \Z^2$ with $||z-z'||_1=1$

\begin{enumerate}
\item $|h_x(z)-h_x(z')|=3$ if there is a  $u$ such that $u+x(u)=z+z'$
\item $|h_x(z)-h_x(z')|=1$ if there is no $u$ such that $u+x(u)=z+z'$.
\end{enumerate}

If we further require that $h_x(0,0)=0$ then there are exactly two
choices for $h$.  We pick one arbitrarily by first putting a
checkerboard pattern on the plane with a white in the square between
(0,0) and (1,1).  Then we say pick the height function so that if
$v,w \in \Z^2$ and the edge between $v$ and $w$ bisects a domino in
$x$ then $h_x(v)-h_x(w)=3$ if moving from $v$ to $w$ there is a
white square on your left and $h_x(v)-h_x(w)=-3$ if moving from $v$
to $w$ there is a white square on your right.
 The height function has been extensively studied \cite{height1} \cite{kenyon1}
\cite{kenyon2} \cite{kenyon4} \cite{sheffield} \cite{thurston}.
In Figure \ref{bayh} we show an example of a domino tiling $y$ and its corresponding height function.

\setlength{\unitlength}{12mm}
\newsavebox{\mysquare}
\savebox{\mysquare}{\textcolor{yellow}{\rule{12mm}{12mm}}}

\begin{figure}

\begin{picture}(12,10)
 \put(2,0){\usebox{\mysquare}} \put(4,0){\usebox{\mysquare}} \put(8,0){\usebox{\mysquare}}

\put(0,2){\usebox{\mysquare}} \put(2,2){\usebox{\mysquare}} \put(4,2){\usebox{\mysquare}} \put(6,2){\usebox{\mysquare}} \put(8,2){\usebox{\mysquare}}  \put(10,2){\usebox{\mysquare}}

\put(0,4){\usebox{\mysquare}} \put(2,4){\usebox{\mysquare}} \put(4,4){\usebox{\mysquare}} \put(6,4){\usebox{\mysquare}} \put(8,4){\usebox{\mysquare}}  \put(10,4){\usebox{\mysquare}}

\put(0,6){\usebox{\mysquare}} \put(2,6){\usebox{\mysquare}} \put(4,6){\usebox{\mysquare}} \put(6,6){\usebox{\mysquare}} \put(8,6){\usebox{\mysquare}}  \put(10,6){\usebox{\mysquare}}

\put(2,8){\usebox{\mysquare}} \put(4,8){\usebox{\mysquare}} \put(6,8){\usebox{\mysquare}} \put(8,8){\usebox{\mysquare}}

\put(1,1){\usebox{\mysquare}} \put(3,1){\usebox{\mysquare}}  \put(5,1){\usebox{\mysquare}} \put(7,1){\usebox{\mysquare}} \put(9,1){\usebox{\mysquare}}

\put(1,3){\usebox{\mysquare}} \put(3,3){\usebox{\mysquare}}  \put(5,3){\usebox{\mysquare}} \put(7,3){\usebox{\mysquare}} \put(9,3){\usebox{\mysquare}}

\put(1,5){\usebox{\mysquare}} \put(3,5){\usebox{\mysquare}}  \put(5,5){\usebox{\mysquare}} \put(7,5){\usebox{\mysquare}} \put(9,5){\usebox{\mysquare}}

\put(1,7){\usebox{\mysquare}} \put(3,7){\usebox{\mysquare}}  \put(5,7){\usebox{\mysquare}} \put(7,7){\usebox{\mysquare}} \put(9,7){\usebox{\mysquare}}

  \put(1,0){\line(1,0){1}}  \put(2,0){\line(1,0){1}}  \put(3,0){\line(1,0){1}}     \put(7,0){\line(1,0){1}}  \put(8,0){\line(1,0){1}}

  \put(2,1){\line(1,0){1}}  \put(3,1){\line(1,0){1}}  \put(4,1){\line(1,0){1}}    \put(5,1){\line(1,0){1}}  \put(6,1){\line(1,0){1}}   \put(9,1){\line(1,0){1}}  \put(10,1){\line(1,0){1}}
  \put(0,2){\line(1,0){1}}  \put(1,2){\line(1,0){1}}   \put(5,2){\line(1,0){1}}  \put(5,2){\line(1,0){1}}  \put(6,2){\line(1,0){1}}  \put(7,2){\line(1,0){1}}  \put(8,2){\line(1,0){1}}  \put(9,2){\line(1,0){1}}  \put(10,2){\line(1,0){1}}
  \put(0,3){\line(1,0){1}}  \put(1,3){\line(1,0){1}}  \put(2,3){\line(1,0){1}}  \put(3,3){\line(1,0){1}}  \put(4,3){\line(1,0){1}}  \put(5,3){\line(1,0){1}}  \put(5,3){\line(1,0){1}}  \put(6,3){\line(1,0){1}}  \put(7,3){\line(1,0){1}}  \put(8,3){\line(1,0){1}}  \put(9,3){\line(1,0){1}}  \put(10,3){\line(1,0){1}}
  \put(0,4){\line(1,0){1}}  \put(1,4){\line(1,0){1}}    \put(3,4){\line(1,0){1}}  \put(4,4){\line(1,0){1}}   \put(8,4){\line(1,0){1}}  \put(9,4){\line(1,0){1}}  \put(10,4){\line(1,0){1}}
    \put(2,5){\line(1,0){1}}  \put(3,5){\line(1,0){1}}  \put(4,5){\line(1,0){1}}  \put(5,5){\line(1,0){1}}    \put(6,5){\line(1,0){1}}  \put(7,5){\line(1,0){1}}  \put(9,5){\line(1,0){1}}  \put(10,5){\line(1,0){1}}
  \put(0,6){\line(1,0){1}}  \put(1,6){\line(1,0){1}}  \put(2,6){\line(1,0){1}}  \put(3,6){\line(1,0){1}}  \put(4,6){\line(1,0){1}}  \put(5,6){\line(1,0){1}}  \put(5,6){\line(1,0){1}}  \put(6,6){\line(1,0){1}}  \put(7,6){\line(1,0){1}}  \put(8,6){\line(1,0){1}}  \put(9,6){\line(1,0){1}}  \put(10,6){\line(1,0){1}}
  \put(0,7){\line(1,0){1}}  \put(1,7){\line(1,0){1}}  \put(2,7){\line(1,0){1}}  \put(3,7){\line(1,0){1}}   \put(5,7){\line(1,0){1}}  \put(6,7){\line(1,0){1}}  \put(8,7){\line(1,0){1}}  \put(9,7){\line(1,0){1}}  \put(10,7){\line(1,0){1}}
    \put(1,8){\line(1,0){1}}  \put(2,8){\line(1,0){1}}    \put(4,8){\line(1,0){1}}  \put(5,8){\line(1,0){1}}  \put(6,8){\line(1,0){1}}  \put(7,8){\line(1,0){1}}    \put(9,8){\line(1,0){1}}  \put(10,8){\line(1,0){1}}
    \put(1,9){\line(1,0){1}}  \put(2,9){\line(1,0){1}}  \put(3,9){\line(1,0){1}}  \put(4,9){\line(1,0){1}}  \put(5,9){\line(1,0){1}}  \put(6,9){\line(1,0){1}}  \put(7,9){\line(1,0){1}}  \put(8,9){\line(1,0){1}}

    \put(1,0){\line(0,1){1}}  \put(2,0){\line(0,1){1}}   \put(4,0){\line(0,1){1}}      \put(7,0){\line(0,1){1}}  \put(8,0){\line(0,1){1}}  \put(9,0){\line(0,1){1}}
  \put(1,1){\line(0,1){1}}  \put(2,1){\line(0,1){1}}  \put(3,1){\line(0,1){1}}  \put(4,1){\line(0,1){1}}  \put(5,1){\line(0,1){1}}  \put(7,1){\line(0,1){1}}  \put(8,1){\line(0,1){1}} \put(9,1){\line(0,1){1}}   \put(11,1){\line(0,1){1}}
  \put(0,2){\line(0,1){1}}   \put(2,2){\line(0,1){1}}  \put(3,2){\line(0,1){1}}  \put(4,2){\line(0,1){1}} \put(5,2){\line(0,1){1}}  \put(7,2){\line(0,1){1}}    \put(9,2){\line(0,1){1}}   \put(11,2){\line(0,1){1}}
  \put(0,3){\line(0,1){1}}  \put(2,3){\line(0,1){1}}  \put(3,3){\line(0,1){1}}  \put(5,3){\line(0,1){1}}  \put(6,3){\line(0,1){1}}  \put(7,3){\line(0,1){1}}  \put(8,3){\line(0,1){1}}  \put(10,3){\line(0,1){1}}
  \put(0,4){\line(0,1){1}}  \put(1,4){\line(0,1){1}}  \put(2,4){\line(0,1){1}}  \put(3,4){\line(0,1){1}}     \put(5,4){\line(0,1){1}}  \put(6,4){\line(0,1){1}}  \put(7,4){\line(0,1){1}}  \put(8,4){\line(0,1){1}}  \put(9,4){\line(0,1){1}}  \put(11,4){\line(0,1){1}}
  \put(0,5){\line(0,1){1}}  \put(1,5){\line(0,1){1}}  \put(2,5){\line(0,1){1}}  \put(4,5){\line(0,1){1}}  \put(6,5){\line(0,1){1}}  \put(8,5){\line(0,1){1}}  \put(9,5){\line(0,1){1}}  \put(11,5){\line(0,1){1}}
  \put(0,6){\line(0,1){1}}  \put(2,6){\line(0,1){1}}  \put(4,6){\line(0,1){1}}    \put(5,6){\line(0,1){1}}  \put(7,6){\line(0,1){1}}  \put(8,6){\line(0,1){1}}  \put(10,6){\line(0,1){1}}
  \put(1,7){\line(0,1){1}}  \put(3,7){\line(0,1){1}}  \put(4,7){\line(0,1){1}}    \put(5,7){\line(0,1){1}}    \put(7,7){\line(0,1){1}}  \put(8,7){\line(0,1){1}}  \put(9,7){\line(0,1){1}}  \put(11,7){\line(0,1){1}}
  \put(1,8){\line(0,1){1}}  \put(3,8){\line(0,1){1}}  \put(4,8){\line(0,1){1}}  \put(6,8){\line(0,1){1}}  \put(8,8){\line(0,1){1}}  \put(9,8){\line(0,1){1}}

\put(0.1,.1){} \put(1.1,.1){0}  \put(2.1,.1){1}  \put(3.1,.1){0}  \put(4.1,.1){2}  \put(5.1,.1){}  \put(6.1,.1){}  \put(7.1,.1){4}  \put(8.1,.1){5}  \put(9.1,.1){4}  \put(10.1,.1){}  \put(11.1,.1){}

\put(0.1,1.1){} \put(1.1,1.1){-1}  \put(2.1,1.1){2}  \put(3.1,1.1){3}  \put(4.1,1.1){2}  \put(5.1,1.1){3}  \put(6.1,1.1){2}  \put(7.1,1.1){3}  \put(8.1,1.1){6}  \put(9.1,1.1){3}  \put(10.1,1.1){2}  \put(11.1,1.1){3}

\put(0.1,2.1){1} \put(1.1,2.1){0}  \put(2.1,2.1){1}  \put(3.1,2.1){4}  \put(4.1,2.1){1}  \put(5.1,2.1){4}  \put(6.1,2.1){5}  \put(7.1,2.1){4}  \put(8.1,2.1){5}  \put(9.1,2.1){4}  \put(10.1,2.1){5}  \put(11.1,2.1){4}

\put(0.1,3.1){2} \put(1.1,3.1){3}  \put(2.1,3.1){2}  \put(3.1,3.1){3}  \put(4.1,3.1){2}  \put(5.1,3.1){3}  \put(6.1,3.1){2}  \put(7.1,3.1){3}  \put(8.1,3.1){2}  \put(9.1,3.1){3}  \put(10.1,3.1){2}  \put(11.1,3.1){3}

\put(0.1,4.1){1} \put(1.1,4.1){0}  \put(2.1,4.1){1}  \put(3.1,4.1){4}  \put(4.1,4.1){5}  \put(5.1,4.1){4}  \put(6.1,4.1){1}  \put(7.1,4.1){4}  \put(8.1,4.1){1}  \put(9.1,4.1){0}  \put(10.1,4.1){1}  \put(11.1,4.1){0}

\put(0.1,5.1){2} \put(1.1,5.1){-1}  \put(2.1,5.1){2}  \put(3.1,5.1){3}  \put(4.1,5.1){2}  \put(5.1,5.1){3}  \put(6.1,5.1){2}  \put(7.1,5.1){3}  \put(8.1,5.1){2}  \put(9.1,5.1){-1}  \put(10.1,5.1){-2}  \put(11.1,5.1){-1}

\put(0.1,6.1){1} \put(1.1,6.1){0}  \put(2.1,6.1){1}  \put(3.1,6.1){0}  \put(4.1,6.1){1}  \put(5.1,6.1){0}  \put(6.1,6.1){1}  \put(7.1,6.1){0}  \put(8.1,6.1){1}  \put(9.1,6.1){0}  \put(10.1,6.1){1}  \put(11.1,6.1){0}

\put(0.1,7.1){2} \put(1.1,7.1){3}  \put(2.1,7.1){2}  \put(3.1,7.1){3}  \put(4.1,7.1){2}  \put(5.1,7.1){-1}  \put(6.1,7.1){-2}  \put(7.1,7.1){-1} \put(8.1,7.1){2}  \put(9.1,7.1){3}  \put(10.1,7.1){2}  \put(11.1,7.1){3}

\put(0.1,8.1){} \put(1.1,8.1){4}  \put(2.1,8.1){5}  \put(3.1,8.1){4}  \put(4.1,8.1){1}  \put(5.1,8.1){0}  \put(6.1,8.1){1}  \put(7.1,8.1){0}  \put(8.1,8.1){1}  \put(9.1,8.1){4}  \put(10.1,8.1){5}  \put(11.1,8.1){4}

\put(0.1,9.1){} \put(1.1,9.1){3}  \put(2.1,9.1){2}  \put(3.1,9.1){3}  \put(4.1,9.1){2}  \put(5.1,9.1){3}  \put(6.1,9.1){2}  \put(7.1,9.1){3}  \put(8.1,9.1){2}  \put(9.1,9.1){3}  \put(10.1,9.1){}  \put(11.1,9.1){}



\end{picture}

\caption[Short Caption]{A domino tiling with its height function. \label{bayh}}
\end{figure}


One useful way to think of a domino tiling as a graph. It has
vertices $(\Z+\frac12)^2$ and an edge between $u$ and $x(u)$ for all
$u$. The second condition of a domino tiling
implies that every vertex in this graph has degree one. With this
interpretation we can consider the union of two domino tilings. This
interpretation will be very useful for studying the height function.

For any domino tiling $x$ and any $N \in \N$ we define
$$ \tilde x_N=\left\{y:\ y|_{\Z^2\setminus [-N,N]^2}=x|_{\Z^2\setminus [-N,N]^2}\right\}.$$
The following theorem of Kenyon shows that cycles of $y \cup y'$ are critical to understanding the
difference in the height functions.

\begin{theorem} \cite{kenyon2} \label{thm:kenyon2}
For all $N$, $x$ and $y,y' \in \tilde x_N$
\begin{enumerate}
\item If there exists a path from $u$ to $v$ which does not cross a cycle of
      $y \cup y'$ then
      $$h_{y}(u)-h_{y'}(u)=h_{y}(v)-h_{y'}(v). $$
\item $h_{y}-h_{y'}$ increases or decreases by 4 every time you cross a cycle of $y \cup y'$ and
\item conditioned on $y,y' \in \tilde x_N$ and the cycles in $y \cup y'$, the increases or decreases of $h_{y}-h_{y'}$
      are mutually independent for all the cycles in $y \cup
      y'$.
\end{enumerate}
\end{theorem}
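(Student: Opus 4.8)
The plan is to realize the height function as the discrete integral of a $\pm1/\pm3$-valued one-form and to read off all three claims from how that form behaves under the difference $h_y-h_{y'}$. Concretely, using the checkerboard orientation described above, for a tiling $z$ and an oriented edge $e$ of $\Z^2$ set $\delta_z(e)=+1$ if $e$ lies on the boundary of a domino of $z$ and $\delta_z(e)=-3$ if $e$ bisects a domino of $z$ (with $\delta_z(-e)=-\delta_z(e)$). The defining properties of $h_z$ say exactly that $h_z(v)-h_z(u)=\sum_e \delta_z(e)$ along any lattice path from $u$ to $v$, the sum being path-independent because the tiling condition forces $\delta_z$ to sum to zero around every unit face. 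First I would record the pointwise computation $(\delta_y-\delta_{y'})(e)\in\{0,\pm4\}$, with a nonzero value precisely when $e$ bisects a domino of exactly one of $y,y'$. Since $y\cup y'$ is a disjoint union of cycles (every vertex has degree two), such an $e$ is exactly a lattice edge dual to an edge lying on a nontrivial cycle of $y\cup y'$, that is, an edge whose traversal crosses a cycle.

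Given this, parts (1) and (2) are immediate bookkeeping. Writing $g=h_y-h_{y'}$, a path that crosses no cycle has every edge-increment $(\delta_y-\delta_{y'})(e)=0$, so $g(u)=g(v)$, which is (1); and each crossing of a cycle contributes $\pm4$, giving the magnitude in (2). To obtain the consistency of the \emph{sign} along a fixed cycle $C$, I would use that $g$ is an honest single-valued function, hence locally constant on the complement of the cycles, together with the fact that distinct cycles are vertex-disjoint. The latter produces an annular lattice collar on each side of $C$ meeting no other cycle, so $g$ is constant just inside and just outside $C$; every dual edge of $C$ joins these two constant values, forcing the inward crossing jump to be one fixed element of $\{+4,-4\}$. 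Equivalently, if two crossings of $C$ gave opposite inward jumps, then crossing inward at one and outward at the other would close up a loop with nonzero total increment, contradicting single-valuedness.

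For part (3) the point is to identify the conditional randomness exactly. After conditioning on $y,y'\in\tilde x_N$ and on the cycle set of $y\cup y'$, the pair agrees off the cycles, and on each nontrivial cycle $C$ there are precisely two alternating perfect matchings; the only remaining data is, for each $C$, a bit $b(C)$ recording which matching is used by $y$. By the computation above, the inward jump of $g$ across $C$ is determined by $b(C)$ and the fixed geometry of $C$, and flipping $b(C)$ negates $(\delta_y-\delta_{y'})$ on the edges dual to $C$ while leaving every other edge untouched; thus flipping $b(C)$ reverses the sign across $C$, and across $C$ alone. Consequently the joint law of the increases and decreases is the image of the joint law of the bits $(b(C))_C$ under a coordinatewise bijection, and it suffices to show these bits are mutually independent and fair.

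The main obstacle, and the only place the \emph{probabilistic} input enters, is this independence. I would derive it from the structure of the maximal entropy measure $\prob$ of Theorem~\ref{dominoisbernoulli}: flipping $b(C)$ sends $(y,y')$ to a pair differing from it only inside the finite cycle $C$, and because $\prob$ is the uniform Gibbs measure for dominoes, this recombination preserves the product weight of the pair. Hence every bit configuration compatible with the given cycle set is equally likely, which is precisely mutual independence of the $b(C)$ and therefore of the jump signs. Parts (1)–(2) are routine combinatorics and planar topology; the care lies in setting up the collar argument cleanly and in quoting the correct Gibbs/uniformity property of $\prob$ to factor the weight over the cycles.
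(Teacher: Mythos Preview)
The paper does not supply its own proof of this theorem: it is stated with a citation to \cite{kenyon2} and then illustrated by Figures~\ref{toucan} and~\ref{toucan1}, with no argument given. So there is nothing in the paper to compare your proposal against.

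That said, your sketch is the standard and correct argument. Two minor remarks. First, for the sign consistency in part~(2), the collar argument is fine but slightly heavier than needed: once you know $g=h_y-h_{y'}$ is a well-defined integer function that is constant on connected components of the complement of the cycles (your part~(1)), the jump across any edge of a fixed cycle $C$ is simply the difference of the two constant values on the two adjacent components, hence automatically the same for every crossing of $C$. Second, for part~(3) you do not need to invoke Theorem~\ref{dominoisbernoulli} or the Gibbs property in any substantive way. The conditioning on $\tilde x_N$ already restricts to a finite set on which the maximal-entropy measure is uniform; thus $\prob\times\prob$ conditioned on $\tilde x_N\times\tilde x_N$ is just counting measure on pairs. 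For each nontrivial cycle $C$, swapping which of $y,y'$ carries which of the two alternating matchings on $C$ is a bijection of this finite set that fixes the cycle structure of $y\cup y'$ and negates the jump across $C$ while leaving all other jumps unchanged. Independence and fairness of the bits $(b(C))_C$ follow from this family of commuting involutions by pure counting.
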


To illustrate this theorem in Figures \ref{toucan} and \ref{toucan1} we show the previous domino tiling $y$ and another domino tiling $y'$ such that $y\cup y'$ has a cycle.  The height functions for both $x$ and $y$ are shown.  Note that the height function in the second tiling agrees with the height function in the first outside the cycle and is four greater than the height function for the first tiling inside the cycle.


%
\setlength{\unitlength}{12mm}
\savebox{\mysquare}{\textcolor{yellow}{\rule{23mm}{11mm}}}

\newsavebox{\bluesquare}
\savebox{\bluesquare}{\textcolor{blue}{\rule{23mm}{11mm}}}
\begin{figure}
\begin{picture}(12,10)

  \put(1,0){\line(1,0){1}}  \put(2,0){\line(1,0){1}}  \put(3,0){\line(1,0){1}}     \put(7,0){\line(1,0){1}}  \put(8,0){\line(1,0){1}}

%
%
%
%

\put(3.04,1.04){\rotatebox{90}{\usebox{\bluesquare}}}
\put(4.04,1.04){\rotatebox{90}{\usebox{\bluesquare}}}
\put(5.04,2.04){\usebox{\bluesquare}}
\put(7.04,2.04){\usebox{\bluesquare}}

\put(2.04,6.04){\usebox{\bluesquare}}
\put(2.04,5.04){\usebox{\bluesquare}}
\put(3.04,4.04){\usebox{\bluesquare}}
\put(3.04,3.04){\usebox{\bluesquare}}

\put(5.04,7.04){\usebox{\bluesquare}}

\put(8.04,3.04){\rotatebox{00}{\usebox{\bluesquare}}}
\put(9.04,4.04){\rotatebox{00}{\usebox{\bluesquare}}}
\put(9.04,5.04){\rotatebox{00}{\usebox{\bluesquare}}}
\put(8.04,6.04){\rotatebox{00}{\usebox{\bluesquare}}}
\put(7.04,6.04){\rotatebox{90}{\usebox{\bluesquare}}}
\put(4.04,6.04){\rotatebox{90}{\usebox{\bluesquare}}}


  \put(2,1){\line(1,0){1}}  \put(3,1){\line(1,0){1}}  \put(4,1){\line(1,0){1}}    \put(5,1){\line(1,0){1}}  \put(6,1){\line(1,0){1}}   \put(9,1){\line(1,0){1}}  \put(10,1){\line(1,0){1}}
  \put(0,2){\line(1,0){1}}  \put(1,2){\line(1,0){1}}   \put(5,2){\line(1,0){1}}  \put(5,2){\line(1,0){1}}  \put(6,2){\line(1,0){1}}  \put(7,2){\line(1,0){1}}  \put(8,2){\line(1,0){1}}  \put(9,2){\line(1,0){1}}  \put(10,2){\line(1,0){1}}
  \put(0,3){\line(1,0){1}}  \put(1,3){\line(1,0){1}}  \put(2,3){\line(1,0){1}}  \put(4,3){\line(1,0){1}}  \put(5,3){\line(1,0){1}}  \put(5,3){\line(1,0){1}}  \put(6,3){\line(1,0){1}}  \put(7,3){\line(1,0){1}}  \put(9,3){\line(1,0){1}}  \put(10,3){\line(1,0){1}}
  \put(0,4){\line(1,0){1}}  \put(1,4){\line(1,0){1}}  \put(3,4){\line(1,0){1}}   \put(8,4){\line(1,0){1}}  \put(10,4){\line(1,0){1}}
    \put(2,5){\line(1,0){1}}  \put(4,5){\line(1,0){1}}  \put(5,5){\line(1,0){1}}    \put(6,5){\line(1,0){1}}  \put(7,5){\line(1,0){1}}  \put(9,5){\line(1,0){1}}
  \put(0,6){\line(1,0){1}}  \put(1,6){\line(1,0){1}}  \put(3,6){\line(1,0){1}}  \put(4,6){\line(1,0){1}}  \put(5,6){\line(1,0){1}}  \put(5,6){\line(1,0){1}}  \put(6,6){\line(1,0){1}}  \put(7,6){\line(1,0){1}}  \put(8,6){\line(1,0){1}}  \put(10,6){\line(1,0){1}}
  \put(0,7){\line(1,0){1}}  \put(1,7){\line(1,0){1}}  \put(2,7){\line(1,0){1}}  \put(3,7){\line(1,0){1}}   \put(5,7){\line(1,0){1}}  \put(6,7){\line(1,0){1}}  \put(8,7){\line(1,0){1}}  \put(9,7){\line(1,0){1}}  \put(10,7){\line(1,0){1}}
    \put(1,8){\line(1,0){1}}  \put(2,8){\line(1,0){1}}    \put(4,8){\line(1,0){1}}  \put(5,8){\line(1,0){1}}  \put(6,8){\line(1,0){1}}  \put(7,8){\line(1,0){1}}    \put(9,8){\line(1,0){1}}  \put(10,8){\line(1,0){1}}
    \put(1,9){\line(1,0){1}}  \put(2,9){\line(1,0){1}}  \put(3,9){\line(1,0){1}}  \put(4,9){\line(1,0){1}}  \put(5,9){\line(1,0){1}}  \put(6,9){\line(1,0){1}}  \put(7,9){\line(1,0){1}}  \put(8,9){\line(1,0){1}}

    \put(1,0){\line(0,1){1}}  \put(2,0){\line(0,1){1}}   \put(4,0){\line(0,1){1}}      \put(7,0){\line(0,1){1}}  \put(8,0){\line(0,1){1}}  \put(9,0){\line(0,1){1}}
  \put(1,1){\line(0,1){1}}  \put(2,1){\line(0,1){1}}  \put(3,1){\line(0,1){1}}  \put(5,1){\line(0,1){1}}  \put(7,1){\line(0,1){1}}  \put(8,1){\line(0,1){1}} \put(9,1){\line(0,1){1}}   \put(11,1){\line(0,1){1}}
  \put(0,2){\line(0,1){1}}   \put(2,2){\line(0,1){1}}  \put(3,2){\line(0,1){1}}  \put(4,2){\line(0,1){1}} \put(9,2){\line(0,1){1}}   \put(11,2){\line(0,1){1}}
  \put(0,3){\line(0,1){1}}  \put(2,3){\line(0,1){1}}  \put(3,3){\line(0,1){1}}  \put(5,3){\line(0,1){1}}  \put(6,3){\line(0,1){1}}  \put(7,3){\line(0,1){1}}  \put(8,3){\line(0,1){1}}  \put(10,3){\line(0,1){1}}
  \put(0,4){\line(0,1){1}}  \put(1,4){\line(0,1){1}}  \put(2,4){\line(0,1){1}}  \put(3,4){\line(0,1){1}}     \put(5,4){\line(0,1){1}}  \put(6,4){\line(0,1){1}}  \put(7,4){\line(0,1){1}}  \put(8,4){\line(0,1){1}}  \put(9,4){\line(0,1){1}}  \put(11,4){\line(0,1){1}}
  \put(0,5){\line(0,1){1}}  \put(1,5){\line(0,1){1}}  \put(2,5){\line(0,1){1}}  \put(4,5){\line(0,1){1}}  \put(6,5){\line(0,1){1}}  \put(8,5){\line(0,1){1}}  \put(9,5){\line(0,1){1}}  \put(11,5){\line(0,1){1}}
  \put(0,6){\line(0,1){1}}  \put(2,6){\line(0,1){1}}  \put(5,6){\line(0,1){1}}  \put(7,6){\line(0,1){1}}    \put(10,6){\line(0,1){1}}
  \put(1,7){\line(0,1){1}}  \put(3,7){\line(0,1){1}}  \put(4,7){\line(0,1){1}}    \put(8,7){\line(0,1){1}}  \put(9,7){\line(0,1){1}}  \put(11,7){\line(0,1){1}}
  \put(1,8){\line(0,1){1}}  \put(3,8){\line(0,1){1}}  \put(4,8){\line(0,1){1}}  \put(6,8){\line(0,1){1}}  \put(8,8){\line(0,1){1}}  \put(9,8){\line(0,1){1}}

\put(0.1,.1){} \put(1.1,.1){0}  \put(2.1,.1){1}  \put(3.1,.1){0}  \put(4.1,.1){2}  \put(5.1,.1){}  \put(6.1,.1){}  \put(7.1,.1){4}  \put(8.1,.1){5}  \put(9.1,.1){4}  \put(10.1,.1){}  \put(11.1,.1){}

\put(0.1,1.1){} \put(1.1,1.1){-1}  \put(2.1,1.1){2}  \put(3.1,1.1){3}  \put(4.1,1.1){2}  \put(5.1,1.1){3}  \put(6.1,1.1){2}  \put(7.1,1.1){3}  \put(8.1,1.1){6}  \put(9.1,1.1){3}  \put(10.1,1.1){2}  \put(11.1,1.1){3}

\put(0.1,2.1){1} \put(1.1,2.1){0}  \put(2.1,2.1){1}  \put(3.1,2.1){4}  \put(4.1,2.1){1}  \put(5.1,2.1){4}  \put(6.1,2.1){5}  \put(7.1,2.1){4}  \put(8.1,2.1){5}  \put(9.1,2.1){4}  \put(10.1,2.1){5}  \put(11.1,2.1){4}

\put(0.1,3.1){2} \put(1.1,3.1){3}  \put(2.1,3.1){2}  \put(3.1,3.1){3}  \put(4.1,3.1){2}  \put(5.1,3.1){3}  \put(6.1,3.1){2}  \put(7.1,3.1){3}  \put(8.1,3.1){2}  \put(9.1,3.1){3}  \put(10.1,3.1){2}  \put(11.1,3.1){3}

\put(0.1,4.1){1} \put(1.1,4.1){0}  \put(2.1,4.1){1}  \put(3.1,4.1){4}  \put(4.1,4.1){5}  \put(5.1,4.1){4}  \put(6.1,4.1){1}  \put(7.1,4.1){4}  \put(8.1,4.1){1}  \put(9.1,4.1){0}  \put(10.1,4.1){1}  \put(11.1,4.1){0}

\put(0.1,5.1){2} \put(1.1,5.1){-1}  \put(2.1,5.1){2}  \put(3.1,5.1){3}  \put(4.1,5.1){2}  \put(5.1,5.1){3}  \put(6.1,5.1){2}  \put(7.1,5.1){3}  \put(8.1,5.1){2}  \put(9.1,5.1){-1}  \put(10.1,5.1){-2}  \put(11.1,5.1){-1}

\put(0.1,6.1){1} \put(1.1,6.1){0}  \put(2.1,6.1){1}  \put(3.1,6.1){0}  \put(4.1,6.1){1}  \put(5.1,6.1){0}  \put(6.1,6.1){1}  \put(7.1,6.1){0}  \put(8.1,6.1){1}  \put(9.1,6.1){0}  \put(10.1,6.1){1}  \put(11.1,6.1){0}

\put(0.1,7.1){2} \put(1.1,7.1){3}  \put(2.1,7.1){2}  \put(3.1,7.1){3}  \put(4.1,7.1){2}  \put(5.1,7.1){-1}  \put(6.1,7.1){-2}  \put(7.1,7.1){-1} \put(8.1,7.1){2}  \put(9.1,7.1){3}  \put(10.1,7.1){2}  \put(11.1,7.1){3}

\put(0.1,8.1){} \put(1.1,8.1){4}  \put(2.1,8.1){5}  \put(3.1,8.1){4}  \put(4.1,8.1){1}  \put(5.1,8.1){0}  \put(6.1,8.1){1}  \put(7.1,8.1){0}  \put(8.1,8.1){1}  \put(9.1,8.1){4}  \put(10.1,8.1){5}  \put(11.1,8.1){4}

\put(0.1,9.1){} \put(1.1,9.1){3}  \put(2.1,9.1){2}  \put(3.1,9.1){3}  \put(4.1,9.1){2}  \put(5.1,9.1){3}  \put(6.1,9.1){2}  \put(7.1,9.1){3}  \put(8.1,9.1){2}  \put(9.1,9.1){3}  \put(10.1,9.1){}  \put(11.1,9.1){}

  \linethickness{0.75mm}
  \put(3.5,1.5){\line(1,0){1}}
  \put(4.5,2.5){\line(1,0){4}}
  \put(8.5,3.5){\line(1,0){1}}
  \put(9.5,4.5){\line(1,0){1}}
  \put(9.5,5.5){\line(1,0){1}}
  \put(7.5,6.5){\line(1,0){2}}
  \put(4.5,7.5){\line(1,0){3}}
  \put(2.5,6.5){\line(1,0){2}}
  \put(2.5,5.5){\line(1,0){1}}
  \put(3.5,4.5){\line(1,0){1}}
  \put(3.5,3.5){\line(1,0){1}}

  \put(4.5,1.5){\line(0,1){1}}
  \put(8.5,2.5){\line(0,1){1}}
  \put(9.5,3.5){\line(0,1){1}}
  \put(9.5,5.5){\line(0,1){1}}
  \put(10.5,4.5){\line(0,1){1}}
  \put(7.5,6.5){\line(0,1){1}}
  \put(4.5,6.5){\line(0,1){1}}
  \put(2.5,5.5){\line(0,1){1}}
  \put(3.5,4.5){\line(0,1){1}}
  \put(4.5,3.5){\line(0,1){1}}
  \put(3.5,1.5){\line(0,1){2}}


\end{picture}
 \caption{A portion of a domino tiling $y$ with a cycle (in $y \cup y'$, where $y'$ is on the next page) highlighted. \label{toucan} }

\end{figure}

\begin{figure}
\begin{picture}(12,10) \label{house}
  \put(1,0){\line(1,0){1}}  \put(2,0){\line(1,0){1}}  \put(3,0){\line(1,0){1}}     \put(7,0){\line(1,0){1}}  \put(8,0){\line(1,0){1}}

\put(3.04,1.04){\usebox{\mysquare}}
\put(4.04,2.04){\usebox{\mysquare}}
\put(6.04,2.04){\usebox{\mysquare}}

\put(3.04,6.04){\usebox{\mysquare}}
\put(4.04,7.04){\usebox{\mysquare}}
\put(6.04,7.04){\usebox{\mysquare}}
\put(7.04,6.04){\usebox{\mysquare}}

\put(8.04,2.04){\rotatebox{90}{\usebox{\mysquare}}}
\put(9.04,3.04){\rotatebox{90}{\usebox{\mysquare}}}
\put(10.04,4.04){\rotatebox{90}{\usebox{\mysquare}}}
\put(9.04,5.04){\rotatebox{90}{\usebox{\mysquare}}}

\put(3.04,2.04){\rotatebox{90}{\usebox{\mysquare}}}
\put(4.04,3.04){\rotatebox{90}{\usebox{\mysquare}}}
\put(3.04,4.04){\rotatebox{90}{\usebox{\mysquare}}}
\put(2.04,5.04){\rotatebox{90}{\usebox{\mysquare}}}



  \put(2,1){\line(1,0){1}}  \put(3,1){\line(1,0){1}}  \put(4,1){\line(1,0){1}}    \put(5,1){\line(1,0){1}}  \put(6,1){\line(1,0){1}}   \put(9,1){\line(1,0){1}}  \put(10,1){\line(1,0){1}}
  \put(0,2){\line(1,0){1}}  \put(1,2){\line(1,0){1}}   \put(5,2){\line(1,0){1}}  \put(5,2){\line(1,0){1}}  \put(6,2){\line(1,0){1}}  \put(7,2){\line(1,0){1}}  \put(8,2){\line(1,0){1}}  \put(9,2){\line(1,0){1}}  \put(10,2){\line(1,0){1}}
  \put(0,3){\line(1,0){1}}  \put(1,3){\line(1,0){1}}  \put(2,3){\line(1,0){1}}  \put(4,3){\line(1,0){1}}  \put(5,3){\line(1,0){1}}  \put(5,3){\line(1,0){1}}  \put(6,3){\line(1,0){1}}  \put(7,3){\line(1,0){1}}  \put(9,3){\line(1,0){1}}  \put(10,3){\line(1,0){1}}
  \put(0,4){\line(1,0){1}}  \put(1,4){\line(1,0){1}}  \put(3,4){\line(1,0){1}}   \put(8,4){\line(1,0){1}}  \put(10,4){\line(1,0){1}}
    \put(2,5){\line(1,0){1}}  \put(4,5){\line(1,0){1}}  \put(5,5){\line(1,0){1}}    \put(6,5){\line(1,0){1}}  \put(7,5){\line(1,0){1}}  \put(9,5){\line(1,0){1}}
  \put(0,6){\line(1,0){1}}  \put(1,6){\line(1,0){1}}  \put(3,6){\line(1,0){1}}  \put(4,6){\line(1,0){1}}  \put(5,6){\line(1,0){1}}  \put(5,6){\line(1,0){1}}  \put(6,6){\line(1,0){1}}  \put(7,6){\line(1,0){1}}  \put(8,6){\line(1,0){1}}  \put(10,6){\line(1,0){1}}
  \put(0,7){\line(1,0){1}}  \put(1,7){\line(1,0){1}}  \put(2,7){\line(1,0){1}}  \put(3,7){\line(1,0){1}}   \put(5,7){\line(1,0){1}}  \put(6,7){\line(1,0){1}}  \put(8,7){\line(1,0){1}}  \put(9,7){\line(1,0){1}}  \put(10,7){\line(1,0){1}}
    \put(1,8){\line(1,0){1}}  \put(2,8){\line(1,0){1}}    \put(4,8){\line(1,0){1}}  \put(5,8){\line(1,0){1}}  \put(6,8){\line(1,0){1}}  \put(7,8){\line(1,0){1}}    \put(9,8){\line(1,0){1}}  \put(10,8){\line(1,0){1}}
    \put(1,9){\line(1,0){1}}  \put(2,9){\line(1,0){1}}  \put(3,9){\line(1,0){1}}  \put(4,9){\line(1,0){1}}  \put(5,9){\line(1,0){1}}  \put(6,9){\line(1,0){1}}  \put(7,9){\line(1,0){1}}  \put(8,9){\line(1,0){1}}

    \put(1,0){\line(0,1){1}}  \put(2,0){\line(0,1){1}}   \put(4,0){\line(0,1){1}}      \put(7,0){\line(0,1){1}}  \put(8,0){\line(0,1){1}}  \put(9,0){\line(0,1){1}}
  \put(1,1){\line(0,1){1}}  \put(2,1){\line(0,1){1}}  \put(3,1){\line(0,1){1}}  \put(5,1){\line(0,1){1}}  \put(7,1){\line(0,1){1}}  \put(8,1){\line(0,1){1}} \put(9,1){\line(0,1){1}}   \put(11,1){\line(0,1){1}}
  \put(0,2){\line(0,1){1}}   \put(2,2){\line(0,1){1}}  \put(3,2){\line(0,1){1}}  \put(4,2){\line(0,1){1}} \put(9,2){\line(0,1){1}}   \put(11,2){\line(0,1){1}}
  \put(0,3){\line(0,1){1}}  \put(2,3){\line(0,1){1}}  \put(3,3){\line(0,1){1}}  \put(5,3){\line(0,1){1}}  \put(6,3){\line(0,1){1}}  \put(7,3){\line(0,1){1}}  \put(8,3){\line(0,1){1}}  \put(10,3){\line(0,1){1}}
  \put(0,4){\line(0,1){1}}  \put(1,4){\line(0,1){1}}  \put(2,4){\line(0,1){1}}  \put(3,4){\line(0,1){1}}     \put(5,4){\line(0,1){1}}  \put(6,4){\line(0,1){1}}  \put(7,4){\line(0,1){1}}  \put(8,4){\line(0,1){1}}  \put(9,4){\line(0,1){1}}  \put(11,4){\line(0,1){1}}
  \put(0,5){\line(0,1){1}}  \put(1,5){\line(0,1){1}}  \put(2,5){\line(0,1){1}}  \put(4,5){\line(0,1){1}}  \put(6,5){\line(0,1){1}}  \put(8,5){\line(0,1){1}}  \put(9,5){\line(0,1){1}}  \put(11,5){\line(0,1){1}}
  \put(0,6){\line(0,1){1}}  \put(2,6){\line(0,1){1}}  \put(5,6){\line(0,1){1}}  \put(7,6){\line(0,1){1}}    \put(10,6){\line(0,1){1}}
  \put(1,7){\line(0,1){1}}  \put(3,7){\line(0,1){1}}  \put(4,7){\line(0,1){1}}    \put(8,7){\line(0,1){1}}  \put(9,7){\line(0,1){1}}  \put(11,7){\line(0,1){1}}
  \put(1,8){\line(0,1){1}}  \put(3,8){\line(0,1){1}}  \put(4,8){\line(0,1){1}}  \put(6,8){\line(0,1){1}}  \put(8,8){\line(0,1){1}}  \put(9,8){\line(0,1){1}}

\put(0.1,.1){} \put(1.1,.1){0}  \put(2.1,.1){1}  \put(3.1,.1){0}  \put(4.1,.1){2}  \put(5.1,.1){}  \put(6.1,.1){}  \put(7.1,.1){4}  \put(8.1,.1){5}  \put(9.1,.1){4}  \put(10.1,.1){}  \put(11.1,.1){}

\put(0.1,1.1){} \put(1.1,1.1){-1}  \put(2.1,1.1){2}  \put(3.1,1.1){3}  \put(4.1,1.1){2}  \put(5.1,1.1){3}  \put(6.1,1.1){2}  \put(7.1,1.1){3}  \put(8.1,1.1){6}  \put(9.1,1.1){3}  \put(10.1,1.1){2}  \put(11.1,1.1){3}

\put(0.1,2.1){1} \put(1.1,2.1){0}  \put(2.1,2.1){1}  \put(3.1,2.1){4}  \put(4.1,2.1){5}  \put(5.1,2.1){4}  \put(6.1,2.1){5}  \put(7.1,2.1){4}  \put(8.1,2.1){5}  \put(9.1,2.1){4}  \put(10.1,2.1){5}  \put(11.1,2.1){4}

\put(0.1,3.1){2} \put(1.1,3.1){3}  \put(2.1,3.1){2}  \put(3.1,3.1){3}  \put(4.1,3.1){6}  \put(5.1,3.1){7}  \put(6.1,3.1){6}  \put(7.1,3.1){7}  \put(8.1,3.1){6}  \put(9.1,3.1){3}  \put(10.1,3.1){2}  \put(11.1,3.1){3}

\put(0.1,4.1){1} \put(1.1,4.1){0}  \put(2.1,4.1){1}  \put(3.1,4.1){4}  \put(4.1,4.1){5}  \put(5.1,4.1){8}  \put(6.1,4.1){5}  \put(7.1,4.1){8}  \put(8.1,4.1){5}  \put(9.1,4.1){4}  \put(10.1,4.1){1}  \put(11.1,4.1){0}

\put(0.1,5.1){2} \put(1.1,5.1){-1}  \put(2.1,5.1){2}  \put(3.1,5.1){3}  \put(4.1,5.1){6}  \put(5.1,5.1){7}  \put(6.1,5.1){6}  \put(7.1,5.1){7}  \put(8.1,5.1){6}  \put(9.1,5.1){3}  \put(10.1,5.1){2}  \put(11.1,5.1){-1}

\put(0.1,6.1){1} \put(1.1,6.1){0}  \put(2.1,6.1){1}  \put(3.1,6.1){4}  \put(4.1,6.1){5}  \put(5.1,6.1){4}  \put(6.1,6.1){5}  \put(7.1,6.1){4}  \put(8.1,6.1){5}  \put(9.1,6.1){4}  \put(10.1,6.1){1}  \put(11.1,6.1){0}

\put(0.1,7.1){2} \put(1.1,7.1){3}  \put(2.1,7.1){2}  \put(3.1,7.1){3}  \put(4.1,7.1){2}  \put(5.1,7.1){-1}  \put(6.1,7.1){-2}  \put(7.1,7.1){-1} \put(8.1,7.1){2}  \put(9.1,7.1){3}  \put(10.1,7.1){2}  \put(11.1,7.1){3}

\put(0.1,8.1){} \put(1.1,8.1){4}  \put(2.1,8.1){5}  \put(3.1,8.1){4}  \put(4.1,8.1){1}  \put(5.1,8.1){0}  \put(6.1,8.1){1}  \put(7.1,8.1){0}  \put(8.1,8.1){1}  \put(9.1,8.1){4}  \put(10.1,8.1){5}  \put(11.1,8.1){4}

\put(0.1,9.1){} \put(1.1,9.1){3}  \put(2.1,9.1){2}  \put(3.1,9.1){3}  \put(4.1,9.1){2}  \put(5.1,9.1){3}  \put(6.1,9.1){2}  \put(7.1,9.1){3}  \put(8.1,9.1){2}  \put(9.1,9.1){3}  \put(10.1,9.1){}  \put(11.1,9.1){}

  \linethickness{0.75mm}
  \put(3.5,1.5){\line(1,0){1}}
  \put(4.5,2.5){\line(1,0){4}}
  \put(8.5,3.5){\line(1,0){1}}
  \put(9.5,4.5){\line(1,0){1}}
  \put(9.5,5.5){\line(1,0){1}}
  \put(7.5,6.5){\line(1,0){2}}
  \put(4.5,7.5){\line(1,0){3}}
  \put(2.5,6.5){\line(1,0){2}}
  \put(2.5,5.5){\line(1,0){1}}
  \put(3.5,4.5){\line(1,0){1}}
  \put(3.5,3.5){\line(1,0){1}}

  \put(4.5,1.5){\line(0,1){1}}
  \put(8.5,2.5){\line(0,1){1}}
  \put(9.5,3.5){\line(0,1){1}}
  \put(9.5,5.5){\line(0,1){1}}
  \put(10.5,4.5){\line(0,1){1}}
  \put(7.5,6.5){\line(0,1){1}}
  \put(4.5,6.5){\line(0,1){1}}
  \put(2.5,5.5){\line(0,1){1}}
  \put(3.5,4.5){\line(0,1){1}}
  \put(4.5,3.5){\line(0,1){1}}
  \put(3.5,1.5){\line(0,1){2}}


\end{picture}

\caption[Short Caption]{A portion of the  domino tiling $y'$.  Note that the height functions for $y$ and $y'$ differ by 4 inside the cycle. \label{try} } \label{toucan1}

\end{figure}
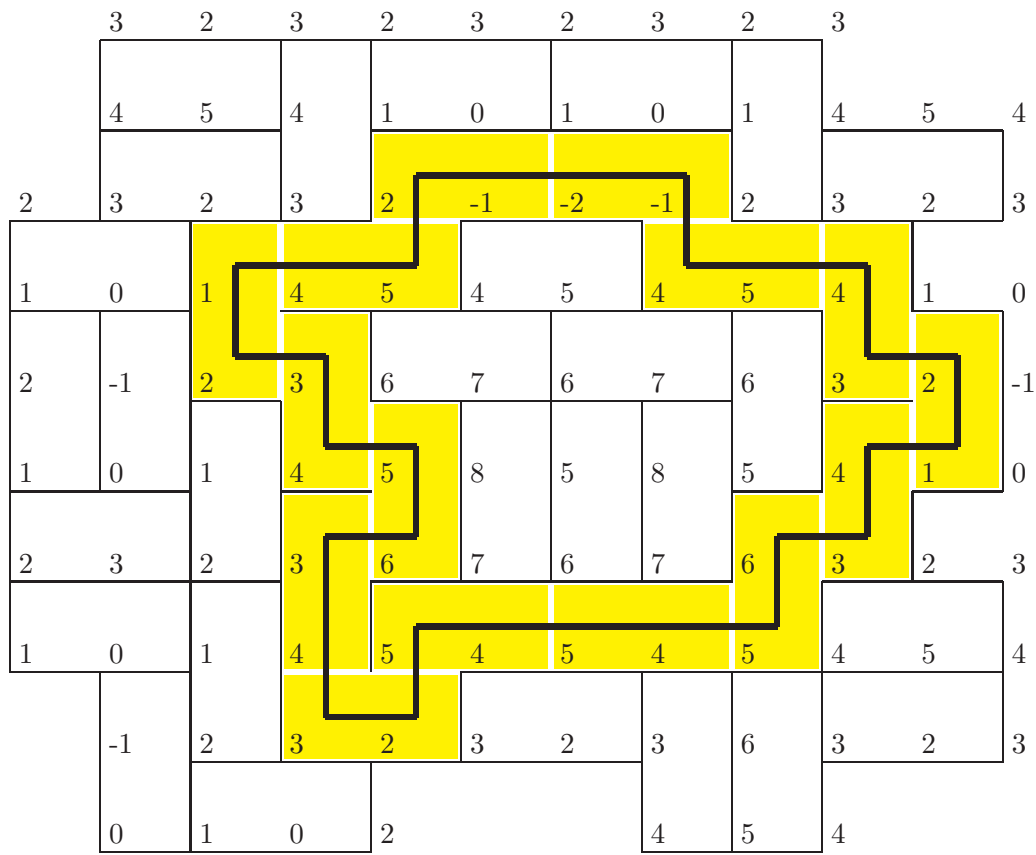
\newpage

%

In \cite{kenyon2} Kenyon proved that asymptotically  the
fluctuations in the height function are conformally invariant.  The
precise version of this theorem that we use is as follows.

Define $\sq_l' \subset \R^2$ be the boundary of the square with vertices at $(\pm l,
\pm l)$ and let $\sq_l=\Z^2 \cap \sq_l'$. Define the annulus $\ann_l$ to be
the region between $\sq_l'$ and $\sq_{2l}'$.

\begin{lem} \cite{kenyon2} \label{cor:kenyon}
There exists $\delta>0$, $N_0$ and $p>0$ such that for all $N>N_0$ and all height functions $H$ and $H'$ with
 $$sup_{v \in \sq_{N} \cup \sq_{2N}}|H(v)|,|H'(v)|<N\delta$$
if we  let $E$ be the event that
 $$h_{x'}|_{\sq_N \cup \sq_{2N}}=H|_{\sq_N \cup \sq_{2N}}$$ and  $$h_{y'}|_{\sq_N \cup \sq_{2N}}=H'|_{\sq_N \cup \sq_{2N}}$$ then
$$\prob\times\prob\bigg((x',y'):\ \exists \text{ a cycle of $x' \cup y'$ in  $\ann_N$} \bigg| \ E \bigg)>p.$$
\end{lem}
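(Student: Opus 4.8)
The plan is to translate the existence of a cycle surrounding the inner square into a statement about the height-difference field $g=h_{x'}-h_{y'}$ and then to control that field using Kenyon's conformal invariance theorem. First I would extract the consequence of Theorem \ref{thm:kenyon2} that is needed: since $g$ is constant along any path crossing no cycle of $x'\cup y'$ and changes by exactly $\pm 4$ across each cycle, there is a cycle of $x'\cup y'$ separating $\sq_N$ from $\sq_{2N}$ inside $\ann_N$ if and only if $g$ cannot be joined from the inner boundary to the outer boundary by a path along which it is constant; equivalently, if and only if the generic level of $g$ near $\sq_N$ differs from that near $\sq_{2N}$. (This is the content of the lemma: contractible cycles are abundant, so the meaningful event is a cycle that winds around the annulus.) Thus it suffices to bound below, uniformly in the admissible boundary data, the conditional probability that $g$ changes level across the annulus.

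Next I would invoke Kenyon's theorem on the conformal invariance of the height fluctuations \cite{kenyon2}. The role of the hypothesis $\sup|H|,|H'|<N\delta$ is to keep the macroscopic slope below $\delta$, so that for $\delta$ small enough both tilings lie in the liquid region and, after centering by the nearly flat limit shape, the fluctuations of $h_{x'}$ and of $h_{y'}$ in $\ann_N$ converge as $N\to\infty$ to independent conformally invariant fields carrying the conformal structure of the fixed-modulus annulus obtained by rescaling $\ann_N$ by $1/N$. Consequently the centered field $g$, being the difference of two independent such fields, converges to $\sqrt{2}$ times a Gaussian free field with the prescribed rescaled boundary data. The event isolated in the first paragraph, that $g$ changes level across the annulus, corresponds to the existence of a contour line of this limiting field that winds around the annulus, and because the annulus has a fixed modulus this has a positive probability $p_0>0$.

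The last step is to make the lower bound uniform over all boundary data with $|H|,|H'|<N\delta$ and to transfer it back to the discrete model. Here I would argue that the worst case is flat boundary data: any boundary data for which the generic levels on $\sq_N$ and on $\sq_{2N}$ already differ forces a winding cycle deterministically by the first paragraph, so such data can only increase the probability, and the infimum over admissible data is attained asymptotically by data whose harmonic extension is flat across the annulus, which is precisely the pure-fluctuation case with probability tending to $p_0$. Choosing $p<p_0$ and $N_0$ large, the convergence of the discrete winding event to its continuum counterpart then gives $\prob\times\prob(\,\cdots\mid E)>p$ for all $N>N_0$.

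I expect the main obstacle to be exactly this uniformity in the boundary data, together with the precise correspondence between discrete cycles of $x'\cup y'$ and continuum contour lines. One must show that boundary data cannot conspire to suppress winding contours below $p_0$, for instance by producing many non-winding cycles that absorb the disagreement near $\sq_N$ or $\sq_{2N}$, and that the winding event is a continuity set for the limiting field so that weak convergence genuinely transfers the positive probability from the continuum back to all large $N$. The constraint $N\delta$ is what prevents either tiling from freezing and is therefore essential to ruling out such degeneracies.
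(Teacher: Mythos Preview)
The paper's entire proof is the single sentence ``This version of the conformal invariance for height functions is stated in the discussion after Theorem~1 in \cite{kenyon2}.'' There is no argument to compare against; you are supplying what the paper outsources to Kenyon. Your plan---reduce the existence of a separating cycle to a statement about the height-difference field $g$ via Theorem~\ref{thm:kenyon2}, then invoke conformal invariance to obtain a uniform positive probability of a winding contour in the fixed-modulus limiting annulus---is precisely the mechanism behind that citation, and your reading of the hypothesis $|H|,|H'|<N\delta$ as a liquid-phase (non-frozen) condition is the correct one.

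Two cautions on the details of your sketch. First, the ``if and only if'' in your opening paragraph overstates things: conditioned on $E$, the boundary values of $g=h_{x'}-h_{y'}$ on $\sq_N\cup\sq_{2N}$ are the deterministic function $H-H'$, so whether the levels differ on the two boundary components is not itself random, and moreover an even number of winding cycles with alternating orientations can leave the net change of $g$ across the annulus equal to zero while separating cycles are present. Only the one-sided implication (a nonzero net change of $g$ along a radial crossing forces at least one separating cycle) holds cleanly, and that is the direction you actually use. Second, your ``flat boundary data is the worst case'' heuristic in the third paragraph is plausible but is not a proof; the substantive content is uniformity of the GFF convergence over the entire class of admissible boundary data, together with continuity of the winding-contour event, and that is exactly what the cited discussion in \cite{kenyon2} is meant to provide. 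You correctly flag both issues as the main obstacles, so your sketch is honest about where the real work lies---the paper simply declines to unpack any of it.
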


\begin{proof}  This version of the conformal invariance for height functions is stated in the
discussion after Theorem 1 in \cite{kenyon2}.
\end{proof}

\section{The colored domino process}\label{construction}
The height function $h_x$ is defined on $\Z ^2$ so
we can easily extend $h_x$ by linearity to the wireframe,
 $(\Z \times \R) \cup (\R \times \Z).$
 Then we define
 \begin{equation} g_x(i,j)=h_x(i,j)+4j \label{spring}\end{equation} on
the wireframe.

Let $$\halfz=\{v \in (\Z \times \R) \cup (\R \times \Z):\  g_x(v)\in
\Z\}.$$

A colored domino tiling of the plane consists of a domino tiling of
the plane $x$ and a function $C: \halfz \to \{1,2\}$ which satisfies
the following coloring rule.
\begin{definition} \label{vancouver}
A map $$C:\halfz \to \{1,2\}$$ satisfies the {\bf coloring rule} if for every $u,v \in \halfz$ with $||u-v||<10$
and $g_x(u)=g_x(v)$ satisfies $C(u)=C(v)$.
\end{definition}

The coloring rule is a global rule as it applies to all $u,v \in \halfz.$ In Section \ref{same} we will show that we can define a ``local coloring rule," that says if $u$ and $v$ are sufficiently close and
$g_x(u)=g_x(v)$ then $C(u)=C(v)$ which implies the coloring rule.  The fact that there is a local coloring rule which is equivalent to Definition \ref{vancouver}
will allow us to show the space
$\dominotilings^*$ is isomorphic to the state space of a subshift of finite type.

Let
$$\dominotilings^*=\bigg\{(x,c):\ x\in \dominotilings \text{ and } c \in \{1,2\}^{\Z} \bigg\}.$$
Let $\sigma(c)_i=c_{i+1}$.
Let $\colorprob$ be the Bernoulli (1/2,1/2) measure on $\{1,2\}^{\Z}$.
We consider the measure $\mu^*=\mu \times \colorprob$ on $\dominotilings^*$.

There are two natural actions $T_{\text{left}}^*$ and $T_{\text{down}}^*$ on $\dominotilings^*$  that preserve $\mu^*$.  One is given by
 $$T_{\text{left}}^*(x,c)=(T_{\text{left}}^2(x),\sigma^{g_x(2,0)}c)$$ and the other
 $$T_{\text{down}}^*(x,c)=(T_{\text{down}}^2(x),\sigma^{g_x(0,2)}c)$$ is defined in the analogous manner.

\begin{remark} \label{whytwo}
The definition of $h_x$ depends on the checkerboard coloring of
$\R^2$.  The checkerboard coloring is not invariant under the shifts
$T_{\text{left}}$ and $T_{\text{down}}$, but is invariant under shifts $T_{\text{left}}^2$ and $T_{\text{down}}^2$.
Because of this
$$h_{T_{\text{left}}^2(x)}(v)=h_x(v+(2,0))-h_x(2,0)$$
but the
relationship between $h_{T_{\text{left}}(x)}$ and $h_x$ is not so simple. Also
$g_x$ and $\halfz$ do not behave nicely under $T_{\text{left}}$ and $T_{\text{down}}$ but do
behave well under $T_{\text{left}}^2$ and $T_{\text{down}}^2$. If we tried to construct a subshift of finite type
which has an isomorphic state space and the shifts $T_{\text{left}}$ and $T_{\text{down}}$, then we would need different tiles for the white squares and the black squares in the underlying checkerboard coloring of the plane.  This would result in a two point factor.
\end{remark}

We conclude this section by proving some facts about $g_x$ which will be useful (in Section \ref{same}) to show that the coloring rule can be generated by a local coloring rule.

\begin{lemma} \label{local}
For any $x \in \dominotilings$ and $a,b,e,f \in \Z$
$$[a,b] \times [e,f] \cap \halfz$$ is determined by $x\cap [a,b]\times[e,f]$
\end{lemma}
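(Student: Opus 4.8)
The plan is to reduce the global condition $g_x(v)\in\Z$ to a one-variable computation on each unit edge of the wireframe, and then to check that the local data that computation needs is visible inside the box. First I would record the trivial but essential observation that $g_x$ is integer-valued at every lattice point $(i,j)\in\Z^2$, since $h_x(i,j)\in\Z$ and $4j\in\Z$; hence every lattice point already lies in $\halfz$, and it remains only to locate the points of $\halfz$ in the interior of each unit edge. Since every wireframe point of $[a,b]\times[e,f]$ that is not a lattice point lies in the interior of a unit edge which is itself contained in the box, it suffices to understand $\halfz$ edge by edge.

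Next I would carry out the computation on a single edge. On a horizontal edge from $(i,j)$ to $(i+1,j)$ the term $4j$ is constant, so writing $\Delta=h_x(i+1,j)-h_x(i,j)\in\{\pm1,\pm3\}$ one has $g_x(i+s,j)=g_x(i,j)+s\Delta$; as $g_x(i,j)\in\Z$, the interior solutions of $g_x\in\Z$ are exactly the $s\in(0,1)$ with $s\Delta\in\Z$, a set depending only on $\Delta$. On a vertical edge from $(i,j)$ to $(i,j+1)$ the extra contribution $4t$ enters, and one computes $g_x(i,j+t)=g_x(i,j)+t(\Delta+4)$, so the interior solutions are the $t\in(0,1)$ with $t(\Delta+4)\in\Z$, again determined by $\Delta$ alone. (This is exactly the point of shifting $h_x$ by $4j$ in the definition of $g_x$: it replaces the vertical increment $\Delta$ by $\Delta+4\in\{1,3,5,7\}$.) Thus the trace of $\halfz$ on any edge is a function of the single integer $\Delta$ along that edge.

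The third step is to show that $\Delta$ along each edge is locally determined. Its magnitude is $3$ if the edge bisects a domino and $1$ otherwise, directly from the definition of $h_x$; its sign is pinned down by the checkerboard coloring through the white-on-left rule. Indeed, traversing the boundary of a single white face counterclockwise, exactly one of its four edges bisects the domino covering that face while the other three are boundary edges (the partner relation $x(u)=v\iff x(v)=u$ forces each non-partner edge to bisect no domino), so the changes sum as $-3+1+1+1=0$; this simultaneously confirms the change $-3$ on the bisecting edge and forces the change $+1$ on each boundary edge when one moves with the white square on the left. Since the checkerboard coloring is globally fixed, the sign of $\Delta$ on any given edge is known as soon as we know whether that edge bisects a domino.

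Finally I would verify that the bisecting/non-bisecting status of every wireframe edge contained in $[a,b]\times[e,f]$ is readable from $x\cap[a,b]\times[e,f]$, interpreted as the values $x(u)$ for all domino-centers $u\in(\Z+\tfrac12)^2$ lying in the box. Whether an edge bisects a domino is equivalent to whether the two unit faces adjacent to it are matched to one another, and for a non-degenerate box every edge contained in it has at least one adjacent face whose center lies in the box, whose match is therefore part of the given data. Combining the three reductions gives that $\halfz\cap[a,b]\times[e,f]$ is determined by $x\cap[a,b]\times[e,f]$. I expect the one genuinely delicate point to be precisely this last step: an edge on the boundary of the box has an adjacent face lying outside the box, so one must be sure the restricted tiling still records the match of the inside face rather than merely the dominoes contained entirely in the box — otherwise a corner face matched outward would leave the status of its two boundary edges ambiguous. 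One should also dispose of the degenerate cases $a=b$ or $e=f$ separately, but these follow from the same edgewise computation once the restriction data is read correctly; everything else is routine affine arithmetic.
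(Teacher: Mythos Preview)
Your proposal is correct and follows essentially the same approach as the paper: both arguments rest on the two observations that $g_x$ is integer-valued at every lattice point and that differences $g_x(u)-g_x(v)$ for $u,v$ in the box depend only on $x|_{[a,b]\times[e,f]}$. The paper compresses this into two lines, whereas you unpack the second observation edge-by-edge with an explicit computation of the increments and a discussion of boundary ambiguities; your concern about corner faces matched outward is resolved exactly as you anticipate, since the restriction records $x(u)$ for each center $u$ in the box and hence the partner even when it lies outside.
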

\begin{proof}
For $z \in \Z^2$ we have that $g_x(z) \in \Z$. For $u,v \in [a,b] \times [e,f]$
$$g_x(u)-g_x(v)$$ is determined by $x|_{[a,b] \times [e,f]}.$  Combining these two proves the lemma.
\end{proof}

\begin{lemma} \label{ne} For all $x \in \dominotilings$ and $(i,j) \in \Z ^2$
$$g_x(i,j)+7 \geq g_x(i,j+1)\geq g_x(i,j)+1.$$  Thus for each $x \in \dominotilings$ and  $i,k \in \Z$
there exists a unique $j \in \R$ such that $g_x(i,j)=k$.
\end{lemma}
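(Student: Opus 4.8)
The plan is to read the increment of $g_x$ across a single vertical edge directly off the defining property of the height function, and then to upgrade that pointwise bound into a statement about the continuous interpolation $t \mapsto g_x(i,t)$ along the vertical wireframe line $\{i\}\times\R$.

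First I would fix $(i,j)\in\Z^2$ and observe that $(i,j)$ and $(i,j+1)$ are adjacent vertices of $\Z^2$, since $\|(i,j)-(i,j+1)\|_1=1$. By the definition of the height function, $|h_x(i,j)-h_x(i,j+1)|$ is $3$ when the edge joining them bisects a domino and $1$ when it lies on the boundary of a domino, so in every case
$$h_x(i,j+1)-h_x(i,j)\in\{-3,-1,1,3\}.$$
Since $g_x(i,j)=h_x(i,j)+4j$ by \eqref{spring}, subtracting gives
$$g_x(i,j+1)-g_x(i,j)=\big(h_x(i,j+1)-h_x(i,j)\big)+4\in\{1,3,5,7\},$$
which is exactly the asserted bound $g_x(i,j)+1\le g_x(i,j+1)\le g_x(i,j)+7$. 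The point of the $+4j$ term is precisely that the increment $+4$ dominates the $\pm 3$ fluctuation of $h_x$, so the net increment is uniformly positive.

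For the existence and uniqueness of $j$, I would use that $g_x$ is obtained on the wireframe from the linear interpolation of $h_x$ together with the affine term $4j$, so $t\mapsto g_x(i,t)$ is continuous and, on each unit segment $[j,j+1]$, affine. By the first part its increment across each such segment is at least $1$, so $g_x(i,\cdot)$ is strictly increasing on each segment, and continuity then makes it strictly increasing on all of $\R$. Iterating $g_x(i,j+1)\ge g_x(i,j)+1$ upward and $g_x(i,j-1)\le g_x(i,j)-1$ downward gives $g_x(i,n)\ge g_x(i,0)+n\to+\infty$ and $g_x(i,-n)\le g_x(i,0)-n\to-\infty$. A continuous, strictly increasing function on $\R$ that is unbounded in both directions is a bijection onto $\R$, so for every $k\in\Z$ there is exactly one $j\in\R$ with $g_x(i,j)=k$.

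I do not expect a genuine obstacle here; the whole content is the observation that the definition of $h_x$ confines nearest-neighbor differences to $\{\pm1,\pm3\}$ while the shift by $4j$ translates this window to the strictly positive set $\{1,3,5,7\}$. The only step needing a little care is that the bound must hold for the real-variable interpolated function rather than merely at integer heights; this is handled by noting that $g_x(i,\cdot)$ is affine between consecutive integers, so monotonicity at the integer points upgrades to strict monotonicity everywhere, which is what licenses the intermediate value theorem in the final step.
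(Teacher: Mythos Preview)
Your argument is correct and follows the same approach as the paper: both compute $g_x(i,j+1)-g_x(i,j)=(h_x(i,j+1)-h_x(i,j))+4$ and use that the height difference across a single edge lies in $\{-3,-1,1,3\}$. The paper's proof is terser and leaves the ``Thus'' claim about existence and uniqueness of $j\in\R$ implicit, whereas you spell out the strict monotonicity of the piecewise-affine interpolation and its unboundedness in both directions; this extra care is appropriate and matches how the lemma is used later (e.g.\ in Lemma~\ref{roy}).
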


\begin{proof}
Across any edge in the lattice $h_x$ can change by at most 3 so
$$g_x(i,j+1) -g_x(i,j) = h_x(i,j+1) +4(j+1)-h_x(i,j)-4j \geq 4-3=1$$ and
$$g_x(i,j+1) -g_x(i,j) = h_x(i,j+1) +4(j+1)-h_x(i,j)-4j \leq 4-(-3)=7.$$
\end{proof}

A similar argument gives us the following.

\begin{lemma} \label{nbynw}
For all $x \in \dominotilings$ and $(i,j) \in \Z ^2$
$$g_x(i+1,j-2) \leq g_x(i,j) \leq g_x(i+1,j+2).$$
\end{lemma}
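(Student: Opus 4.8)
The plan is to mimic the proof of Lemma~\ref{ne}: reduce each inequality to a bound on the increment of the bare height function $h_x$ along a short lattice path, and then sum the per-edge bounds. Since $g_x(i+1,j+2)-g_x(i,j)=\bigl(h_x(i+1,j+2)-h_x(i,j)\bigr)+8$, the right-hand inequality is equivalent to $h_x(i+1,j+2)-h_x(i,j)\ge -8$; likewise, because $g_x(i,j)-g_x(i+1,j-2)=\bigl(h_x(i,j)-h_x(i+1,j-2)\bigr)+8$, the left-hand inequality is equivalent to $h_x(i+1,j-2)-h_x(i,j)\le 8$. I would estimate the first increment along the path $(i,j)\to(i,j+1)\to(i,j+2)\to(i+1,j+2)$ and the second along $(i,j)\to(i,j-1)\to(i,j-2)\to(i+1,j-2)$, in each case two vertical edges followed by one horizontal edge.

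The naive estimate is not quite enough, and this is the crux. Each edge changes $h_x$ by at most $3$ in absolute value, so a three-edge path gives only $|h_x(i+1,j+2)-h_x(i,j)|\le 9$; combined with the $+8$ this yields $g_x(i+1,j+2)-g_x(i,j)\ge -1$, which is off by one. The main obstacle is therefore to save a single unit on the two vertical edges.

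This is where I would invoke the checkerboard sign convention used to normalize $h_x$. A vertical edge carries a height change of $\pm 3$ only when it bisects a (necessarily horizontal) domino, and in that case the rule fixes the sign of the jump according to the color of the cell lying on a prescribed side of the edge. For two vertically consecutive edges in a fixed column this adjacent cell switches color, so the two forced $\pm 3$ jumps must have opposite signs; in particular the two vertical increments cannot both equal $-3$ (nor both equal $+3$). Hence along the upward path the two vertical increments sum to at least $-3+(-1)=-4$, and with the horizontal increment bounded below by $-3$ we get $h_x(i+1,j+2)-h_x(i,j)\ge -7$, so $g_x(i+1,j+2)-g_x(i,j)\ge 1$. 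Symmetrically, the two downward increments cannot both equal $+3$, giving $h_x(i+1,j-2)-h_x(i,j)\le 7$ and hence $g_x(i,j)-g_x(i+1,j-2)\ge 1$. Both inequalities of the lemma follow (indeed with a unit to spare). The one point I would check carefully when writing the details is the bookkeeping of the sign rule --- which cell's color controls the sign for each traversal direction --- to confirm the claimed color alternation for consecutive vertical edges.
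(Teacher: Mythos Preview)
Your proposal is correct and takes essentially the same approach as the paper. The paper uses the path $(i,j)\to(i+1,j)\to(i+1,j+1)\to(i+1,j+2)$ (horizontal edge first, then two vertical) rather than your $(i,j)\to(i,j+1)\to(i,j+2)\to(i+1,j+2)$, but the key step is identical: the checkerboard coloring alternates along any straight line, so two consecutive collinear edges cannot both contribute $+3$ (or both $-3$), giving $|h_x(\cdot,\cdot+2)-h_x(\cdot,\cdot)|\le 4$ and hence the total bound $7$; your hesitation about the sign bookkeeping is exactly the point the paper disposes of with the remark that ``you can't move in a straight line past two squares and have a white square on your left (or right) both times.''
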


\begin{proof}
For any $i,j \in \Z^2$
$$|h_x(i,j)-h_x(i+1,j)| \leq 3$$
and
$$|h_x(i+1,j-2)-h_x(i+1,j)|,|h_x(i+1,j+2)-h_x(i+1,j)| \leq 4$$
The last equation is true because you can't move in a straight line past two squares and have a white square on your left (or right) both times.  Thus $h_x$ can not increase (or decrease) by 3 on two consecutive edges and can only increase or decrease by 4 over any segment of length two.
Thus
$$|h_x(i+1,j-2)-h_x(i,j)|,|h_x(i,j+2)-h_x(i+1,j)| \leq 7$$
and
$$g_x(i,j) \in (g_x(i+1,j-2),g_x(i+1,j+2)).$$
\end{proof}

Another version of this principle is the following.

\begin{cor} \label{ten}
For any $x \in \dominotilings$ and $-n \leq i,j \leq n$
$$|g_x(i,j)|\leq 10n.$$
\end{cor}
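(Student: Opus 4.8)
The plan is to bound $|g_x(i,j)|$ by walking from the origin to $(i,j)$ along lattice edges and controlling the increment of $g_x$ across each edge separately in the two coordinate directions. The key normalization is that $g_x(0,0)=h_x(0,0)+4\cdot 0=0$, since $h_x(0,0)=0$ by convention, so it suffices to bound the total change of $g_x$ along a path from $(0,0)$ to $(i,j)$.

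First I would record the per-edge increment bound in each direction. For a vertical edge, Lemma \ref{ne} gives directly $1 \le g_x(i,j+1)-g_x(i,j) \le 7$, hence $|g_x(i,j+1)-g_x(i,j)| \le 7$. For a horizontal edge, since $g_x(i,j)=h_x(i,j)+4j$ and the term $4j$ is unchanged when $i$ moves, we have $g_x(i+1,j)-g_x(i,j)=h_x(i+1,j)-h_x(i,j)$; by the definition of the height function this difference is $\pm 1$ or $\pm 3$, so $|g_x(i+1,j)-g_x(i,j)| \le 3$. This is precisely the estimate already extracted in the proof of Lemma \ref{nbynw}.

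Then I would combine these along a two-leg path. Write
$$g_x(i,j)=g_x(0,0)+\big(g_x(i,0)-g_x(0,0)\big)+\big(g_x(i,j)-g_x(i,0)\big).$$
The horizontal leg from $(0,0)$ to $(i,0)$ consists of $|i|\le n$ edges, each contributing at most $3$ in absolute value, so it contributes at most $3n$. The vertical leg from $(i,0)$ to $(i,j)$ consists of $|j|\le n$ edges, each contributing at most $7$, so it contributes at most $7n$. Since $g_x(0,0)=0$, the triangle inequality yields $|g_x(i,j)| \le 3n+7n=10n$.

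I do not expect a genuine obstacle here: all the content is in the two per-edge bounds, both of which are already available, and Lemmas \ref{ne} and \ref{nbynw} hold for all $(i,j)\in\Z^2$ so the path never leaves their range of validity. The only thing worth flagging is that the constant $10$ is not accidental — it is the sum $3+7$ of the maximal horizontal and vertical increments of $g_x$, which is also the threshold distance appearing in the coloring rule of Definition \ref{vancouver}.
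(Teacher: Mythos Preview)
Your proof is correct and essentially identical to the paper's: the paper also notes that $g_x$ changes by at most $3$ across any horizontal edge and, by Lemma \ref{ne}, by at most $7$ across any vertical edge, and then uses $g_x(0,0)=0$ to conclude. Your write-up is simply a more explicit version of the same path argument.
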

\begin{proof}
Both $h_x$ and $g_x$ can change by at most 3 across any horizontal
edge and by Lemma \ref{ne} $g_x$ can change by at most 7 across any
vertical edge. As $g_x(0,0)=0$ this proves the lemma.
\end{proof}

\section{The subshift of finite type} \label{subshift}
First we describe the set of tiles that we will use in our subshift of finite type.  Let $\tiles$ be the set of all possible ways to tile $[0,2]\times[0,2]$ with dominoes whose corners are at points in $\Z^2$.  We show a representation of the set $\tiles$ in Figure \ref{tilespic}.  By Lemma \ref{local} for each element $D \in \tiles$ we can define $h_D:\{0,1,2\}^2\to \Z$ in the natural way  and extend it by linearity to $g_D$ where
$$g_D:\big(\{0,1,2\}\times [0,2]\big) \ \bigcup \ \big([0,2] \times \{0,1,2\}\big)\to \R.$$
Then let $\halfD$ be the set of $(x,y)$ in
$$\big(\{0,1,2\}\times [0,2]\big) \ \bigcup \   \big([0,2] \times \{0,1,2\}\big)$$ such that
$g_D(x,y) \in \Z$.  Define
$$\colorsD =\text{Range}(g_D) \cap \Z.$$

\begin{definition}
The {\bf alphabet} $A$ for the subshift of finite type is
$$A=\big \{(D,c): \ D \in \tiles, c \in \{1,2\}^{\colorsD} \big\}.$$
\end{definition}

We can extend the map $c \in \{1,2\}^{\colorsD}$ to a map $$\tilde c: \halfD \to \{1,2\}$$ by setting
$\tilde c(v)=c(g(v))$ for all $v \in \halfD$.

\begin{definition}
The {\bf adjacency rule} for the subshift of finite type is that two tiles $(D,c)$ and $(D',c')$ may be adjacent if the maps $$\tilde c:\halfD \to \{1,2\} \text{ and } \tilde c':\halfDprime \to \{1,2\}$$
agree on the portions of the boundaries (of the 2 by 2 tiles) that overlap.
\end{definition}

\begin{definition}
We let the state space $S \subset A^{\Z^2}$ be the set of all points that satisfy the adjacency rule.
\end{definition}

As we will see in Lemma \ref{easter} the adjacency rule  forces a complete domino tiling of the plane.
 We illustrate the alphabet $A$ and the adjacency rule with the following figures. We show a representation of the set \tiles $\mbox{}$ in Figure \ref{tilespic}.   Then we show two elements in \tiles where each point in $\halfD$ is labeled by the value of $g_D$ at that point.  Then for a specific $c \in \{1,2\}^{\colorsD}$ we show its representation as a map from $\halfD \to \{1,2\}$, where 1 is represented by a circle and 2 is represented by a disk.


\setlength{\unitlength}{6mm}

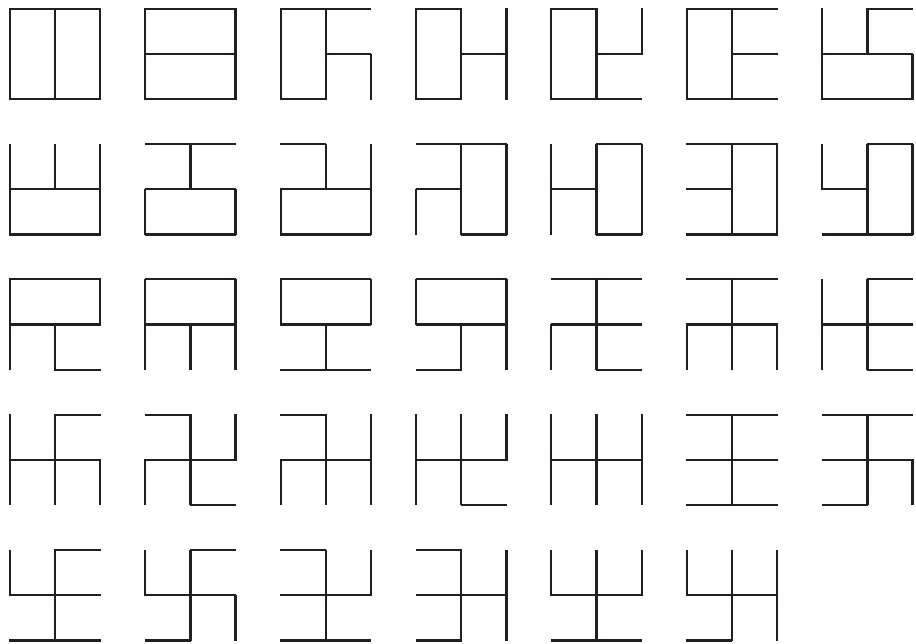
\begin{figure}
\begin{picture}(24,20)

  \put(0,12){\line(1,0){1}}  \put(1,12){\line(1,0){1}}    \put(0,14){\line(1,0){1}}  \put(1,14){\line(1,0){1}}  \put(0,12){\line(0,1){1}}    \put(1,12){\line(0,1){1}}  \put(2,12){\line(0,1){1}}  \put(0,13){\line(0,1){1}}  \put(1,13){\line(0,1){1}}    \put(2,13){\line(0,1){1}}

  \put(3,12){\line(1,0){1}}  \put(4,12){\line(1,0){1}}  \put(3,13){\line(1,0){1}}    \put(4,13){\line(1,0){1}}  \put(3,14){\line(1,0){1}}  \put(4,14){\line(1,0){1}}  \put(3,12){\line(0,1){1}}     \put(5,12){\line(0,1){1}}  \put(3,13){\line(0,1){1}}   \put(5,13){\line(0,1){1}}

  \put(6,12){\line(1,0){1}}        \put(7,13){\line(1,0){1}}  \put(6,14){\line(1,0){1}}  \put(7,14){\line(1,0){1}}  \put(6,12){\line(0,1){1}}    \put(7,12){\line(0,1){1}}  \put(8,12){\line(0,1){1}}  \put(6,13){\line(0,1){1}}  \put(7,13){\line(0,1){1}}

  \put(9,12){\line(1,0){1}}        \put(10,13){\line(1,0){1}}  \put(9,14){\line(1,0){1}}   \put(9,12){\line(0,1){1}}    \put(10,12){\line(0,1){1}}  \put(11,12){\line(0,1){1}}  \put(9,13){\line(0,1){1}}  \put(10,13){\line(0,1){1}}    \put(11,13){\line(0,1){1}}

  \put(12,12){\line(1,0){1}}  \put(13,12){\line(1,0){1}}     \put(13,13){\line(1,0){1}}  \put(12,14){\line(1,0){1}}  \put(12,12){\line(0,1){1}}    \put(13,12){\line(0,1){1}}    \put(12,13){\line(0,1){1}}  \put(13,13){\line(0,1){1}}    \put(14,13){\line(0,1){1}}

  \put(15,12){\line(1,0){1}}  \put(16,12){\line(1,0){1}}      \put(16,13){\line(1,0){1}}  \put(15,14){\line(1,0){1}}  \put(16,14){\line(1,0){1}}  \put(15,12){\line(0,1){1}}    \put(16,12){\line(0,1){1}}    \put(15,13){\line(0,1){1}}  \put(16,13){\line(0,1){1}}

  \put(18,12){\line(1,0){1}}  \put(19,12){\line(1,0){1}}  \put(18,13){\line(1,0){1}}    \put(19,13){\line(1,0){1}}  \put(19,14){\line(1,0){1}}  \put(18,12){\line(0,1){1}}    \put(20,12){\line(0,1){1}}  \put(18,13){\line(0,1){1}}  \put(19,13){\line(0,1){1}}


  \put(0,9){\line(1,0){1}}  \put(1,9){\line(1,0){1}}  \put(0,10){\line(1,0){1}}    \put(1,10){\line(1,0){1}}  \put(0,9){\line(0,1){1}}    \put(2,9){\line(0,1){1}}  \put(0,10){\line(0,1){1}}  \put(1,10){\line(0,1){1}}    \put(2,10){\line(0,1){1}}

  \put(3,9){\line(1,0){1}}  \put(4,9){\line(1,0){1}}  \put(3,10){\line(1,0){1}}    \put(4,10){\line(1,0){1}}  \put(3,11){\line(1,0){1}}  \put(4,11){\line(1,0){1}}  \put(3,9){\line(0,1){1}}    \put(5,9){\line(0,1){1}}  \put(4,10){\line(0,1){1}}

  \put(6,9){\line(1,0){1}}  \put(7,9){\line(1,0){1}}  \put(6,10){\line(1,0){1}}    \put(7,10){\line(1,0){1}}  \put(6,11){\line(1,0){1}}   \put(6,9){\line(0,1){1}}    \put(8,9){\line(0,1){1}} \put(7,10){\line(0,1){1}}    \put(8,10){\line(0,1){1}}

  \put(10,9){\line(1,0){1}}  \put(9,10){\line(1,0){1}}     \put(9,11){\line(1,0){1}}  \put(10,11){\line(1,0){1}}  \put(9,9){\line(0,1){1}}    \put(10,9){\line(0,1){1}}  \put(11,9){\line(0,1){1}}    \put(10,10){\line(0,1){1}}    \put(11,10){\line(0,1){1}}

  \put(13,9){\line(1,0){1}}  \put(12,10){\line(1,0){1}}     \put(14,9){\line(0,1){1}}  \put(13,11){\line(1,0){1}}  \put(12,9){\line(0,1){1}}    \put(13,9){\line(0,1){1}}    \put(12,10){\line(0,1){1}}  \put(13,10){\line(0,1){1}}    \put(14,10){\line(0,1){1}}

  \put(15,9){\line(1,0){1}}  \put(16,9){\line(1,0){1}}  \put(15,10){\line(1,0){1}}     \put(15,11){\line(1,0){1}}  \put(16,11){\line(1,0){1}} \put(16,9){\line(0,1){1}}  \put(17,9){\line(0,1){1}}      \put(16,10){\line(0,1){1}}    \put(17,10){\line(0,1){1}}

  \put(18,9){\line(1,0){1}}  \put(19,9){\line(1,0){1}}  \put(18,10){\line(1,0){1}}    \put(20,9){\line(0,1){1}}  \put(19,11){\line(1,0){1}} \put(19,9){\line(0,1){1}}  \put(18,10){\line(0,1){1}}  \put(19,10){\line(0,1){1}}    \put(20,10){\line(0,1){1}}


  \put(1,6){\line(1,0){1}}  \put(0,7){\line(1,0){1}}    \put(1,7){\line(1,0){1}}  \put(0,8){\line(1,0){1}}  \put(1,8){\line(1,0){1}}  \put(0,6){\line(0,1){1}}    \put(1,6){\line(0,1){1}}  \put(0,7){\line(0,1){1}}  \put(2,7){\line(0,1){1}}

  \put(3,7){\line(1,0){1}}    \put(4,7){\line(1,0){1}}  \put(3,8){\line(1,0){1}}  \put(4,8){\line(1,0){1}}  \put(3,6){\line(0,1){1}}    \put(4,6){\line(0,1){1}}  \put(5,6){\line(0,1){1}}  \put(3,7){\line(0,1){1}}  \put(5,7){\line(0,1){1}}

  \put(6,6){\line(1,0){1}}  \put(7,6){\line(1,0){1}}  \put(6,7){\line(1,0){1}}    \put(7,7){\line(1,0){1}}  \put(6,8){\line(1,0){1}}  \put(7,8){\line(1,0){1}}  \put(7,6){\line(0,1){1}}  \put(6,7){\line(0,1){1}}  \put(8,7){\line(0,1){1}}

  \put(9,6){\line(1,0){1}}  \put(9,7){\line(1,0){1}}    \put(10,7){\line(1,0){1}}  \put(9,8){\line(1,0){1}}  \put(10,8){\line(1,0){1}} \put(11,6){\line(0,1){1}}   \put(10,6){\line(0,1){1}}  \put(9,7){\line(0,1){1}}  \put(11,7){\line(0,1){1}}


  \put(13,6){\line(1,0){1}}  \put(12,7){\line(1,0){1}}    \put(13,7){\line(1,0){1}}  \put(12,8){\line(1,0){1}}  \put(13,8){\line(1,0){1}}  \put(12,6){\line(0,1){1}}    \put(13,6){\line(0,1){1}}  \put(13,7){\line(0,1){1}}

  \put(15,7){\line(1,0){1}}    \put(16,7){\line(1,0){1}}  \put(15,8){\line(1,0){1}}  \put(16,8){\line(1,0){1}}  \put(15,6){\line(0,1){1}}    \put(16,6){\line(0,1){1}}  \put(17,6){\line(0,1){1}}  \put(16,7){\line(0,1){1}}

  \put(19,6){\line(1,0){1}}  \put(18,7){\line(1,0){1}}    \put(19,7){\line(1,0){1}}  \put(19,8){\line(1,0){1}}  \put(18,6){\line(0,1){1}}    \put(19,6){\line(0,1){1}}  \put(18,7){\line(0,1){1}}  \put(19,7){\line(0,1){1}}


  \put(0,4){\line(1,0){1}}    \put(1,4){\line(1,0){1}}  \put(1,5){\line(1,0){1}}  \put(0,3){\line(0,1){1}}    \put(1,3){\line(0,1){1}}  \put(2,3){\line(0,1){1}}  \put(0,4){\line(0,1){1}}  \put(1,4){\line(0,1){1}}

  \put(4,3){\line(1,0){1}}  \put(3,4){\line(1,0){1}}    \put(4,4){\line(1,0){1}}  \put(3,5){\line(1,0){1}}  \put(3,3){\line(0,1){1}}    \put(4,3){\line(0,1){1}}  \put(4,4){\line(0,1){1}}    \put(5,4){\line(0,1){1}}

  \put(6,4){\line(1,0){1}}    \put(7,4){\line(1,0){1}}  \put(6,5){\line(1,0){1}}  \put(6,3){\line(0,1){1}}    \put(7,3){\line(0,1){1}}  \put(8,3){\line(0,1){1}}  \put(7,4){\line(0,1){1}}    \put(8,4){\line(0,1){1}}

  \put(10,3){\line(1,0){1}}  \put(9,4){\line(1,0){1}}    \put(10,4){\line(1,0){1}}  \put(9,3){\line(0,1){1}}    \put(10,3){\line(0,1){1}}  \put(9,4){\line(0,1){1}}  \put(10,4){\line(0,1){1}}    \put(11,4){\line(0,1){1}}

  \put(12,4){\line(1,0){1}}    \put(13,4){\line(1,0){1}}  \put(12,3){\line(0,1){1}}    \put(13,3){\line(0,1){1}}  \put(14,3){\line(0,1){1}}  \put(12,4){\line(0,1){1}}  \put(13,4){\line(0,1){1}}    \put(14,4){\line(0,1){1}}

  \put(15,3){\line(1,0){1}}  \put(16,3){\line(1,0){1}}  \put(15,4){\line(1,0){1}}    \put(16,4){\line(1,0){1}}  \put(15,5){\line(1,0){1}}  \put(16,5){\line(1,0){1}}  \put(16,3){\line(0,1){1}}    \put(16,4){\line(0,1){1}}

  \put(18,3){\line(1,0){1}}  \put(18,4){\line(1,0){1}}    \put(19,4){\line(1,0){1}}  \put(18,5){\line(1,0){1}}  \put(19,5){\line(1,0){1}}  \put(19,3){\line(0,1){1}}  \put(20,3){\line(0,1){1}}  \put(19,4){\line(0,1){1}}


  \put(0,0){\line(1,0){1}}  \put(1,0){\line(1,0){1}}  \put(0,1){\line(1,0){1}}    \put(1,1){\line(1,0){1}}  \put(1,2){\line(1,0){1}}  \put(1,0){\line(0,1){1}}    \put(0,1){\line(0,1){1}}  \put(1,1){\line(0,1){1}}

  \put(3,0){\line(1,0){1}}  \put(3,1){\line(1,0){1}}    \put(4,1){\line(1,0){1}}  \put(4,2){\line(1,0){1}}  \put(4,0){\line(0,1){1}}  \put(5,0){\line(0,1){1}}  \put(3,1){\line(0,1){1}}  \put(4,1){\line(0,1){1}}

  \put(6,0){\line(1,0){1}}  \put(7,0){\line(1,0){1}}  \put(6,1){\line(1,0){1}}    \put(7,1){\line(1,0){1}}  \put(6,2){\line(1,0){1}}  \put(7,0){\line(0,1){1}}    \put(7,1){\line(0,1){1}}    \put(8,1){\line(0,1){1}}

  \put(9,0){\line(1,0){1}}  \put(9,1){\line(1,0){1}}    \put(10,1){\line(1,0){1}}  \put(9,2){\line(1,0){1}}  \put(10,0){\line(0,1){1}}  \put(11,0){\line(0,1){1}}  \put(10,1){\line(0,1){1}}    \put(11,1){\line(0,1){1}}

  \put(12,0){\line(1,0){1}}  \put(13,0){\line(1,0){1}}  \put(12,1){\line(1,0){1}}    \put(13,1){\line(1,0){1}}  \put(13,0){\line(0,1){1}}    \put(12,1){\line(0,1){1}}  \put(13,1){\line(0,1){1}}    \put(14,1){\line(0,1){1}}

  \put(15,0){\line(1,0){1}}  \put(15,1){\line(1,0){1}}    \put(16,1){\line(1,0){1}}  \put(16,0){\line(0,1){1}}  \put(17,0){\line(0,1){1}}  \put(15,1){\line(0,1){1}}  \put(16,1){\line(0,1){1}}    \put(17,1){\line(0,1){1}}

%

\end{picture}

 \caption{The set $\tiles$ of all 34 tilings of a 2 by 2 square with dominoes.  There are 2 with two complete dominoes, 16 with one complete domino and 16 with no complete dominoes. \label{tilespic}}

\end{figure}

\setlength{\unitlength}{24mm}


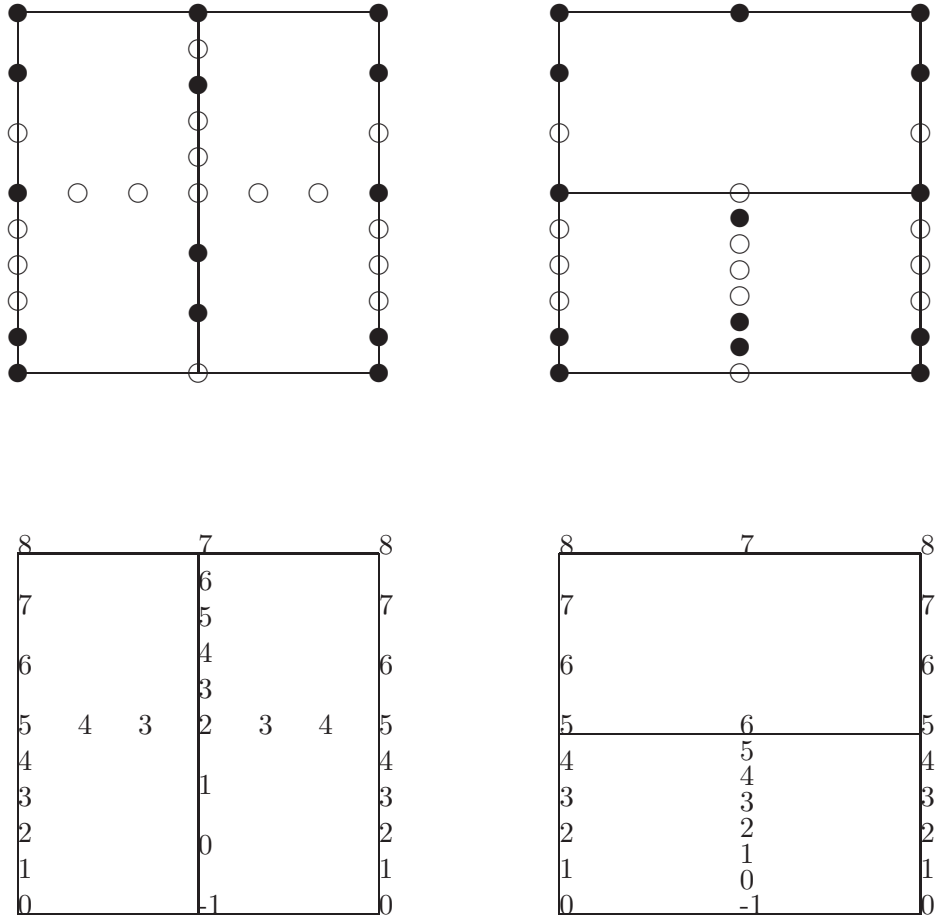
\begin{figure}
\begin{picture}(5,5)

  \put(0,0){\line(1,0){1}}  \put(1,0){\line(1,0){1}}    \put(0,2){\line(1,0){1}}  \put(1,2){\line(1,0){1}}  \put(0,0){\line(0,1){1}}    \put(1,0){\line(0,1){1}}  \put(2,0){\line(0,1){1}}  \put(0,1){\line(0,1){1}}  \put(1,1){\line(0,1){1}}    \put(2,1){\line(0,1){1}}

  \put(3,0){\line(1,0){1}}  \put(3,1){\line(1,0){1}}    \put(3,2){\line(1,0){1}}  \put(4,2){\line(1,0){1}}  \put(3,0){\line(0,1){1}}    \put(4,0){\line(1,0){1}}  \put(5,0){\line(0,1){1}}  \put(3,1){\line(0,1){1}}  \put(4,1){\line(1,0){1}}    \put(5,1){\line(0,1){1}}

\put(1,0){\markminusone}  \put(1,.333){\markzero}  \put(1,.666){\markone}    \put(1,1){\marktwo}
\put(1,1.2){\markthree}  \put(1,1.4){\markfour}  \put(1,1.6){\markfive}  \put(1,1.8){\marksix}  \put(1,2){\markseven}
\put(0,0){\markzero}  \put(0,.2){\markone}  \put(0,.4){\marktwo}  \put(0,.6){\markthree}  \put(0,.8){\markfour}  \put(0,1){\markfive}  \put(0,1.333){\marksix}  \put(0,1.666){\markseven}  \put(0,1){}  \put(0,1){}  \put(0,2){\markeight}

\put(2,0){\markzero}  \put(2,.2){\markone}  \put(2,.4){\marktwo}  \put(2,.6){\markthree}  \put(2,.8){\markfour}  \put(2,1){\markfive}  \put(2,1.333){\marksix}  \put(2,1.666){\markseven}  \put(2,1){}  \put(2,1){}  \put(2,2){\markeight}

\put(0.333,1){\markfour} \put(0.666,1){\markthree} \put(1.333,1){\markthree} \put(1.666,1){\markfour}

\put(4,0){\markminusone}  \put(4,.1428){\markzero}  \put(4,.2857){\markone}    \put(4,.4285){\marktwo}
\put(4,.5714){\markthree}  \put(4,.7142){\markfour}  \put(4,.8571){\markfive}  \put(4,1){\marksix}  \put(4,2){\markseven}
\put(3,0){\markzero}  \put(3,.2){\markone}  \put(3,.4){\marktwo}  \put(3,.6){\markthree}  \put(3,.8){\markfour}  \put(3,1){\markfive}  \put(3,1.333){\marksix}  \put(3,1.666){\markseven}  \put(3,1){}  \put(3,1){}  \put(3,2){\markeight}

\put(5,0){\markzero}  \put(5,.2){\markone}  \put(5,.4){\marktwo}  \put(5,.6){\markthree}  \put(5,.8){\markfour}  \put(5,1){\markfive}  \put(5,1.333){\marksix}  \put(5,1.666){\markseven}  \put(5,1){}  \put(5,1){}  \put(5,2){\markeight}

  \put(0,3){\line(1,0){1}}  \put(1,3){\line(1,0){1}}    \put(0,5){\line(1,0){1}}  \put(1,5){\line(1,0){1}}  \put(0,3){\line(0,1){1}}    \put(1,3){\line(0,1){1}}  \put(2,3){\line(0,1){1}}  \put(0,4){\line(0,1){1}}  \put(1,4){\line(0,1){1}}    \put(2,4){\line(0,1){1}}

  \put(3,3){\line(1,0){1}}  \put(3,4){\line(1,0){1}}    \put(3,5){\line(1,0){1}}  \put(4,5){\line(1,0){1}}  \put(3,3){\line(0,1){1}}    \put(4,3){\line(1,0){1}}  \put(5,3){\line(0,1){1}}  \put(3,4){\line(0,1){1}}  \put(4,4){\line(1,0){1}}    \put(5,4){\line(0,1){1}}

\put(1,3){\newmarkminusone}  \put(1,3.333){\newmarkzero}  \put(1,3.666){\newmarkone}    \put(1,4){\newmarktwo}
\put(1,4.2){\newmarkthree}  \put(1,4.4){\newmarkfour}  \put(1,4.6){\newmarkfive}  \put(1,4.8){\newmarksix}  \put(1,5){\newmarkseven}
\put(0,3){\newmarkzero}  \put(0,3.2){\newmarkone}  \put(0,3.4){\newmarktwo}  \put(0,3.6){\newmarkthree}  \put(0,3.8){\newmarkfour}  \put(0,4){\newmarkfive}  \put(0,4.333){\newmarksix}  \put(0,4.666){\newmarkseven}  \put(0,4){}  \put(0,4){}  \put(0,5){\newmarkeight}

\put(2,3){\newmarkzero}  \put(2,3.2){\newmarkone}  \put(2,3.4){\newmarktwo}  \put(2,3.6){\newmarkthree}  \put(2,3.8){\newmarkfour}  \put(2,4){\newmarkfive}  \put(2,4.333){\newmarksix}  \put(2,4.666){\newmarkseven}  \put(2,4){}  \put(2,4){}  \put(2,5){\newmarkeight}

\put(0.333,4){\newmarkfour} \put(0.666,4){\newmarkthree} \put(1.333,4){\newmarkthree} \put(1.666,4){\newmarkfour}

\put(4,3){\newmarkminusone}  \put(4,3.1428){\newmarkzero}  \put(4,3.2857){\newmarkone}    \put(4,3.42857){\newmarktwo}
\put(4,3.5714){\newmarkthree}  \put(4,3.7142){\newmarkfour}  \put(4,3.8571){\newmarkfive}  \put(4,4){\newmarksix}  \put(4,5){\newmarkseven}
\put(3,3){\newmarkzero}  \put(3,3.2){\newmarkone}  \put(3,3.4){\newmarktwo}  \put(3,3.6){\newmarkthree}  \put(3,3.8){\newmarkfour}  \put(3,4){\newmarkfive}  \put(3,4.333){\newmarksix}  \put(3,4.666){\newmarkseven}  \put(3,4){}  \put(3,4){}  \put(3,5){\newmarkeight}

\put(5,3){\newmarkzero}  \put(5,3.2){\newmarkone}  \put(5,3.4){\newmarktwo}  \put(5,3.6){\newmarkthree}  \put(5,3.8){\newmarkfour}  \put(5,4){\newmarkfive}  \put(5,4.333){\newmarksix}  \put(5,4.666){\newmarkseven}  \put(5,4){}  \put(5,4){}  \put(5,5){\newmarkeight}

\end{picture}
 \caption{The lower pictures show two elements of $A,B \in \tiles$ along with their sets $\halfA$ and $\halfB$. The points in $\halfA$ ($\halfB$) are marked by the value of $g_A$ ($g_B$).  In the upper pictures we chose a coloring of $A$ and $B$.  The adjacency rule for the subshift of finite type allow us to place these colored tiles to be placed next to each other horizontally but not vertically.  \label{ohno}}

\end{figure}

\newpage

\section{The two processes are the same}
\label{same}

In Section \ref{construction} we defined $\dominotilings^*$ and in
Section \ref{subshift} we defined $S$, the state space for our shift of finite type.
Now we will construct a natural bijection between these two spaces.

\begin{definition} \label{defofbijection}
For $(x,c)\in \dominotilings^*$ we define $$(x,c)_{i,j}=(D,\tilde c)\in A$$
by setting $D$ to be
$$T_{{\text left}}^{2i}(T_{{\text down}}^{2j}(x))|_{[0,2] \times [0,2]}$$ and $\tilde c: \colorsD \to \{1,2\}$ by
 $$\tilde c(k)=c(g_x(2i,2j)+k).$$
\end{definition}

\begin{remark} \label{consequencesoftwo} We often consider $(x,c)_{i,j}$ for $(i,j)\in [-N,N]^2.$
This depends on $x$ in $$[-2N,2N+2] \times [-2N,2N+2]$$ and $c$ in
 $$\left\{\min g_x(v),\dots,\max g_x(v)\right\} $$
where the min and max are taken over all $v \in [-2N,2N+2]^2.$
\end{remark}

The definition of $(x,c)_{i,j}$ defines a map from $$M : \dominotilings^* \to A^{\Z^2}$$
by $(M(x,c))_{i,j}=(x,c)_{i,j}.$ Our goal for this section is to prove the following.
\begin{lemma} \label{easter}
$M$ is a shift invariant bijection from $\dominotilings^*$ to $S$.
\end{lemma}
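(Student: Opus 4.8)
The plan is to verify four things in turn: that $M$ maps $\dominotilings^*$ into $S$, that it intertwines the two shift actions, that it is injective, and that it is onto $S$; the last is the substantial step. Everything is made transparent by one computation. For $(x,c)\in\dominotilings^*$ the tile $(x,c)_{i,j}=(D,\tilde c)$ extends to the color $\tilde c(g_D(v))=c\big(g_x(2i,2j)+g_D(v)\big)$ at a point $v$; since by Lemma \ref{local} the height difference $g_D(v)-g_D(0,0)$ equals $g_x\big(v+(2i,2j)\big)-g_x(2i,2j)$ and $g_D$ is normalized with $g_D(0,0)=0$, this equals $c\big(g_x(w)\big)$ where $w=v+(2i,2j)$ is the corresponding global wireframe point. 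Thus the color $M(x,c)$ attaches to any $w\in\halfz$ is intrinsically $c(g_x(w))$, independent of which overlapping tile reads it off.

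Given this, membership in $S$ and shift-invariance are short. Two adjacent tiles of $M(x,c)$ share a boundary segment on which both the colored-point set (namely $\halfz$ restricted to that segment) and the colors $c(g_x(\cdot))$ are intrinsic to $(x,c)$, so the adjacency rule holds and $M(x,c)\in S$. For shift-invariance I would check $M\circ T^*_{\text{left}}=T_{(1,0)}\circ M$ and symmetrically for $T^*_{\text{down}}$: the $D$-component transforms correctly because $T^{2i}_{\text{left}}T^{2j}_{\text{down}}T^2_{\text{left}}=T^{2(i+1)}_{\text{left}}T^{2j}_{\text{down}}$, while Remark \ref{whytwo} gives $g_{T^2_{\text{left}}x}(2i,2j)=g_x(2(i+1),2j)-g_x(2,0)$, so applying $\sigma^{g_x(2,0)}$ to $c$ exactly cancels the change of normalization and the color component matches $(x,c)_{i+1,j}$.

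For injectivity the tiling $x$ is read off directly, since the $D$-components record $x$ on translates of $[0,2]^2$ that cover $\Z^2$. To recover $c$, note that by Lemma \ref{ne} for every $k\in\Z$ there is a wireframe point $w$ with $g_x(w)=k$ (take $i=0$); the color of that point in $M(x,c)$ is $c(k)$, so $c$ is determined.

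The main obstacle is surjectivity, which is also where the claim that the adjacency rule forces a complete tiling is discharged. Given $s\in S$, I would first reconstruct the tiling. On a shared edge a tile's colored points lie exactly where its piecewise-linear height function (integer at lattice points) is an integer, and by Lemma \ref{ne} the increment of $g$ across each unit sub-edge lies in $\{1,3,5,7\}$, so the number and positions of colored points on that sub-edge determine the increment and hence whether a domino is bisected there. Agreement of the colored-point data across the shared edge therefore forces the two tiles to have the same bisection pattern, so their protruding half-dominoes fit together into genuine dominoes; doing this over all edges assembles the $D$-components into a global tiling $x\in\dominotilings$. It remains to realize the colors of $s$ by some $c\in\{1,2\}^{\Z}$. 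The colors of $s$ give a map $C\colon\halfz\to\{1,2\}$, well defined because overlapping tiles agree. Within a single tile $C$ depends only on $g_x$ by construction of the alphabet, and across shared edges it agrees by the adjacency rule; since the level set $\{g_x=k\}$ is, by the monotonicity in Lemmas \ref{ne} and \ref{nbynw}, a single connected curve meeting each vertical line once and crossing each tile boundary in a colored point, $C$ is constant along it. Setting $c(k)$ equal to this common value (every $k$ occurring, by Lemma \ref{ne}) gives $(x,c)\in\dominotilings^*$ with $M(x,c)=s$, since the color of each $w\in\halfz$ in $M(x,c)$ is $c(g_x(w))=C(w)$. The point demanding the most care is precisely the connectivity of these level sets and the fact that consecutive points along them stay within the range where the coloring rule applies; this is exactly the local-coloring-rule analysis that Section \ref{same} is designed to provide.
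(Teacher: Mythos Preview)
Your proposal is correct and follows essentially the same route as the paper: both decompose the claim into ``maps into $S$'', shift-invariance, injectivity, and surjectivity, dispatch the first three quickly, and for surjectivity first reconstruct the domino tiling from the pattern of colored points along shared tile edges and then argue that the color map descends to a function of $g_x$. Your treatment of the easy parts is in fact more explicit than the paper's (which simply says they ``follow straight from the definition''), while for the hard step---that $C$ is constant on each level set of $g_x$---you, like the paper, defer to the chain-of-intermediate-points argument of Lemmas \ref{paul} and \ref{mary}.

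Two small points of precision are worth tightening. First, the increment set $\{1,3,5,7\}$ you quote from Lemma \ref{ne} applies only to vertical unit sub-edges; on horizontal ones the increment of $g$ is $\pm1$ or $\pm3$, and the paper treats the two cases separately to see that the number of interior colored points distinguishes bisected from non-bisected edges in both directions. Second, calling the level set $\{g_x=k\}$ a ``single connected curve'' is a slight overstatement, since $g_x$ lives only on the wireframe and the level set is a discrete collection of points (one on each vertical integer line, plus some on horizontal lines); the substance of the argument is not topological connectedness but the combinatorial fact, supplied by Lemma \ref{paul}, that consecutive points along the level set either lie in a common $2\times 2$ tile or are linked through an intermediate point on a shared tile boundary, so that the adjacency rule can propagate the color step by step. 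You correctly flag this as the delicate point and invoke the right lemmas.
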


To prove this we will use the following lemmas.

\begin{lemma} \label{peter}
For all $D \in \tiles$
$$\text{Range}(g_D) = Range(g_D)|_{\{0,1,2\} \times [0,2]}.$$
\end{lemma}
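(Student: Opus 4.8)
The plan is to compute both sides explicitly as unions of intervals and then show that the horizontal segments contribute no new values. Write $V=\{0,1,2\}\times[0,2]$ for the three vertical segments and set $I_i:=[g_D(i,0),g_D(i,2)]$ for $i\in\{0,1,2\}$. By Lemma \ref{ne}, $g_D$ is strictly increasing as one moves up each vertical segment (its integer increments lie in $[1,7]$, and it is linear in between), so the range of $g_D$ on $\{i\}\times[0,2]$ is exactly $I_i$; since the three segments are disjoint, $\text{Range}(g_D)|_V=I_0\cup I_1\cup I_2$. As the inclusion $\text{Range}(g_D)|_V\subseteq\text{Range}(g_D)$ is trivial, all the content is in the reverse inclusion, which I will get by identifying both sides as the same closed interval.

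First I would show $\text{Range}(g_D)=[m,M]$, where $m=\min_i g_D(i,0)$ and $M=\max_i g_D(i,2)$. The domain $\big(\{0,1,2\}\times[0,2]\big)\cup\big([0,2]\times\{0,1,2\}\big)$ is connected and $g_D$ is continuous, indeed linear on each unit edge, so its range is an interval. Because $g_D$ is monotone on every edge, its extrema over the frame are attained at the nine lattice points; column-wise monotonicity (Lemma \ref{ne}) then places the global minimum in the bottom row and the global maximum in the top row, giving $m$ and $M$.

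The key step is to prove that the three intervals already cover this range, i.e. $I_0\cup I_1\cup I_2=[m,M]$. For this it suffices that consecutive intervals overlap, $I_i\cap I_{i+1}\neq\emptyset$ for $i\in\{0,1\}$, and this is precisely what Lemma \ref{nbynw} gives: applying it at $(i,0)$ yields $g_D(i,0)\le g_D(i+1,2)$, and applying it at $(i,2)$ yields $g_D(i+1,0)\le g_D(i,2)$, which together are exactly the condition that $[g_D(i,0),g_D(i,2)]$ and $[g_D(i+1,0),g_D(i+1,2)]$ intersect. (The inequalities of Lemma \ref{nbynw} use only edge increments of $h$ within the $2\times 3$ block of lattice points at $x\in\{i,i+1\}$, $y\in\{0,1,2\}$, so the same local argument applies verbatim to $g_D$; alternatively one notes each $D\in\tiles$ is the restriction of some $x\in\dominotilings$ to the window, and $g_D$ differs from $g_x$ by a global additive constant that cancels in a one-sided comparison of both members.) Since $I_0$ overlaps $I_1$ and $I_1$ overlaps $I_2$, the union $I_0\cup I_1\cup I_2$ is a single interval whose left endpoint is $\min_i g_D(i,0)=m$ and whose right endpoint is $\max_i g_D(i,2)=M$; hence it equals $[m,M]=\text{Range}(g_D)$, and the lemma follows.

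The main obstacle is ruling out a gap in the vertical range, that is, a value attained on a horizontal segment but on none of the vertical segments. A naive adjacency argument based only on the bound that $g_D$ changes by at most $3$ across a horizontal edge does not suffice, because the middle column could a priori sit isolated on one side of a putative gap (the partition $\{0,2\}$ versus $\{1\}$). The essential input that forecloses this is the diagonal comparison of Lemma \ref{nbynw}, which forces adjacent columns to overlap in value and is exactly what makes the horizontal segments redundant.
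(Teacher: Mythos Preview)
Your proof is correct and follows essentially the same route as the paper: both arguments use piecewise linearity to place the extrema at lattice points and then invoke Lemma~\ref{nbynw} to obtain the diagonal inequalities $g_D(i,0)\le g_D(i+1,2)$ and $g_D(i+1,0)\le g_D(i,2)$. The paper compresses the conclusion into a single sentence (``Combined with the first statement these imply\ldots''), whereas you spell out explicitly why these inequalities force the three vertical ranges $I_0,I_1,I_2$ to overlap pairwise and hence cover the full interval $[m,M]$; this added detail is a genuine improvement in clarity but not a different idea.
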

\begin{proof}
Since $g_x$ is defined by linearity both $\min g_D$ and $\max g_D$ are achieved on
$\{0, 1, 2\}\times\{0, 1, 2\}.$ By Lemma 3.5 $g_x(i, j) \leq  g_x(i+1, j +2)$ and
$g_x(i, j) \geq g_x(i + 1, j - 2).$ Combined with the first statement these imply that
$$\text{Range}(g_D) = \text{Range}(g_D)|_{\{0,1,2\} \times [0,2]}.$$
\end{proof}

These next two lemmas show how the local adjacency rule implies the global coloring rule. It does so because for any to u, v with $g_x(u) = g_x(v)$ there exists an intermediate sequence $\{w_i\}$ connecting $u$ to $v$ with $g_x(w_i) = g_x(u) = g_x(v)$ that are all colored the same.
\begin{lemma} \label{paul}
If $i \in Z$, $x \in \dominotilings$ and $$g_x(i, j) = g_x(i + 1, j')$$
then $|j-j'|<2$ and there exists $m,n \in \Z$ such that either
\begin{enumerate}
\item $(i,j),(i+1,j') \in [2m,2m+2]\times [2n,2n+2] $or
\item  one of $(i,j)$ and $(i+1,j')$ is in $$[2m,2m+2]\times [2n,2n+2]$$
and the other is in
$$[2m,2m+2] \times [2n-2,2n].$$
\end{enumerate}
Also there exists
$$(i'', j'') \in [2m, 2m + 2] \times [2n] $$ with $g_x(i'', j'') = g_x(i, j).$
\end{lemma}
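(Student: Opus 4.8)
The plan is to establish the three assertions in order: the bound on $|j-j'|$, the choice of the tiles $m,n$, and finally the point at height $2n$ carrying the value $k:=g_x(i,j)$.

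First I would prove $|j-j'|<2$ by sharpening Lemma \ref{nbynw}. The estimates in its proof are in fact strict: since $|h_x(i+1,j-2)-h_x(i,j)|\le 7$ we get
\[
g_x(i+1,j-2)-g_x(i,j)=h_x(i+1,j-2)-h_x(i,j)-8\le -1<0,
\]
and symmetrically $g_x(i+1,j+2)-g_x(i,j)\ge 1>0$. Because $g_x(i,j)=g_x(i+1,j')$ and, by Lemma \ref{ne}, $g_x(i+1,\cdot)$ is a strictly increasing bijection of $\R$, these two strict inequalities force $j-2<j'<j+2$, that is $|j-j'|<2$.

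Next I would select the tiles. As $i$ and $i+1$ are consecutive integers, set $m=\lfloor i/2\rfloor$, so that both $i$ and $i+1$ lie in $[2m,2m+2]$; this is the only admissible horizontal strip. For the vertical index put $a=\min(j,j')$ and $b=\max(j,j')$, so $b-a<2$. If $[a,b]$ contains an even integer $2n$, then $(i,j)$ and $(i+1,j')$ lie on opposite sides of the line $y=2n$, giving alternative (2) with the tiles $[2m,2m+2]\times[2n-2,2n]$ and $[2m,2m+2]\times[2n,2n+2]$; otherwise $[a,b]$ sits strictly inside a gap $(2n,2n+2)$ between consecutive even integers and we are in alternative (1), both points lying in $[2m,2m+2]\times[2n,2n+2]$.

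It remains to produce $(i'',2n)$ with $g_x(i'',2n)=k$. In the straddling case (2) this is routine: the two points lie on opposite sides of $y=2n$, so by the vertical monotonicity of $g_x$ (Lemma \ref{ne}) the values $g_x(i,2n)$ and $g_x(i+1,2n)$ lie on opposite sides of $k$, and since $g_x(\,\cdot\,,2n)$ is continuous along the segment joining $(i,2n)$ and $(i+1,2n)\subset[2m,2m+2]\times\{2n\}$, the intermediate value theorem supplies the point. The main obstacle is case (1), where both points sit strictly above $y=2n$: here $g_x(i,2n)<k$ and $g_x(i+1,2n)<k$, so the naive intermediate value argument along the bottom edge has no reason to reach $k$ (indeed for the all-vertical-dominoes tiling one checks that $k$ need not appear on $[2m,2m+2]\times\{2n\}$ at all). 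The way around this is to use the fine structure of the height function rather than the crude bounds: $k$ is an integer in the range of $g_D$ for the tile $D=x|_{[2m,2m+2]^2}$, i.e. $k\in\colorsD$, and Lemma \ref{peter} guarantees that every such value is attained on the vertical edges $\{2m,2m+1,2m+2\}\times[2n,2n+2]$ of the tile, which at least pins down a point of the tile with $g_x=k$. Reconciling that point with the horizontal line $y=2n$ is precisely the delicate step, and it is the place where the constraint that each face of the tiling carries exactly one domino-bisecting edge — the content behind Lemmas \ref{ne}, \ref{nbynw}, and \ref{peter} — must be used carefully; I expect this to be the crux of the argument and would spend most of the effort there.
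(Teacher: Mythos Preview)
Your argument for $|j-j'|<2$ and for the tile selection is correct and matches the paper. Your intermediate-value argument in the straddling case (your case (2)) is exactly what the paper does: with $j<2n<j'$ (or vice versa), monotonicity of $g_x$ in the second variable puts $g_x(i,2n)$ and $g_x(i+1,2n)$ on opposite sides of $k$, and continuity along the edge $[i,i+1]\times\{2n\}$ gives $(i'',2n)$.

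The gap in your proposal is not a missing idea but a misreading of the statement. You are trying to produce the point $(i'',2n)$ in \emph{both} alternatives, and you correctly suspect that in alternative (1) this may simply be false. The paper does not attempt it: its proof treats the case $2n\le j,j'\le 2n+2$ by recording that both points already lie in $[2m,2m+2]\times[2n,2n+2]$, and only invokes the intermediate-value argument in the two straddling cases. The way Lemma~\ref{paul} is consumed in Lemma~\ref{mary} confirms this reading: the intermediate point $(i'',2n)$ is inserted precisely when $w_i'$ and $w_{i+1}'$ fail to lie in a common $2\times 2$ tile, i.e.\ in alternative (2), so that it sits on the shared edge of the two adjacent tiles. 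In alternative (1) no such point is needed. So drop the attempt to force the ``also'' clause in alternative (1); the ``crux'' you anticipated does not exist, and your straddling-case argument already completes the lemma as intended.
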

\begin{proof}
The first statement follows by linearity from Lemma 3.5. Because of this either there exists $n$ such that
\begin{enumerate}
\item  $2n-2<j<2n<j' <2n+2$,
\item $ 2n \leq j, j'  \leq 2n+2$ or
\item $ 2n+2>j>2n>j' >2n-2.$
\end{enumerate}
In the second case then there exists $m \in \Z$ such that
$(i, j), (i + 1, j') \in [2m, 2m + 2] \times [2n, 2n + 2].$ The first and third cases are symmetric so it suffices to only consider the first. As $g_x$ is increasing on $i \times [j, 2n]$ and
$(i + 1) \times [2n, j']$ and $g_x(i, j) = g_x(i + 1, j')$, by the intermediate value theorem we can find the appropriate $(i'', j'') \in [i, i + 1] \times 2n.$
\end{proof}

\begin{lemma} \label{mary}
Given $v,v' \in (\Z\times \R) \cup (\R\times \Z)$ and $x \in \dominotilings$ with
$g_x(v) = g_x(v')$
there exists a sequence $w_0,\dots ,w_k$ such that
\begin{enumerate}
\item $v=w_0$,
\item  $v' =w_k$,
\item $g_x(w_i)=g_x(w_{i+1})$ for all $i=0,\dots,k-1$ and
\item for all $i=0, \dots , k-1$ there exist $m,n \in \Z$ such that
$$w_i,w_{i+1}  \in [2m,2m+2] \times [2n,2n+2].$$
\end{enumerate}
\end{lemma}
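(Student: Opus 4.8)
The plan is to build the chain $w_0,\dots,w_k$ in three stages, exploiting the fact that conditions (3) and (4) do not ask for a continuous path: we only need each consecutive pair to share a common value of $g_x$ and to lie in a common $2\times 2$ tile. Write $c=g_x(v)=g_x(v')$. By Lemma \ref{ne}, along every vertical integer line $\{i\}\times\R$ the function $g_x$ is strictly increasing with all of $\R$ in its range, so there is a unique point $q_i=(i,\psi_i)$ with $g_x(q_i)=c$. These points $\{q_i\}_{i\in\Z}$ serve as a ``backbone'' of the level set, and the whole argument reduces to (a) attaching $v$ and $v'$ to this backbone and (b) walking along it.

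First I would carry out the reduction of the endpoints. Fix a tile $[2m,2m+2]\times[2n,2n+2]$ containing $v$ (any one, if $v$ lies on a tile boundary). Since $v$ is a wireframe point of this tile, $c=g_x(v)$ lies in the range of $g_x$ on the tile; by Lemma \ref{peter} (applied to the tile occupying this square, using that $g_x$ there agrees with some $g_D$ up to an additive constant) this range is already attained on the three vertical segments $\{2m,2m+1,2m+2\}\times[2n,2n+2]$, so there is a point $p$ on one of these segments with $g_x(p)=c$. Then $v$ and $p$ have equal $g_x$-value and lie in a common tile, so $v\to p$ is a legal step; and since $p$ sits on a vertical integer line, uniqueness (Lemma \ref{ne}) forces $p=q_{i_1}$ for the appropriate $i_1$. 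Applying the same reduction to $v'$ produces a legal step $v'\to q_{i_2}$.

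Next I would walk along the backbone from $q_{i_1}$ to $q_{i_2}$. For each consecutive pair $q_i,q_{i+1}$ we have $g_x(i,\psi_i)=g_x(i+1,\psi_{i+1})=c$, so Lemma \ref{paul} applies with $j=\psi_i$, $j'=\psi_{i+1}$: either $q_i$ and $q_{i+1}$ already lie in a common tile (a single legal step), or they lie in vertically adjacent tiles sharing the horizontal line $y=2n$, on which Lemma \ref{paul} furnishes an intermediate point $(i'',j'')$ with $g_x(i'',j'')=c$ lying in a common tile with each of $q_i$ and $q_{i+1}$ (two legal steps $q_i\to(i'',j'')\to q_{i+1}$). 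Concatenating these links for $i$ running from $i_1$ to $i_2$ --- finitely many, since $|i_1-i_2|<\infty$ --- and prepending the step $v\to q_{i_1}$ and appending $q_{i_2}\to v'$ yields the required sequence, with every point carrying $g_x$-value $c$ and every consecutive pair sharing a tile.

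The main work is in the reduction stage rather than the walk: the key realization is that Lemma \ref{peter} lets me collapse an endpoint lying in the interior of a horizontal edge onto the vertical backbone in a single tile-step, \emph{without} needing the level set to meet the backbone continuously. Once both endpoints sit on integer vertical lines, the walk is a direct assembly of Lemma \ref{paul}, and the only remaining points to check are the degenerate cases ($v=v'$, or $v$ already on a vertical line so that $v=q_{i_1}$ and the first step is vacuous), which are immediate.
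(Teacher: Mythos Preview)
Your proof is correct and follows essentially the same approach as the paper: reduce each endpoint to a point on a vertical integer line via Lemma~\ref{peter}, build the backbone $\{q_i\}$ using Lemma~\ref{ne}, and bridge consecutive backbone points using Lemma~\ref{paul}, inserting an intermediate point on a horizontal tile boundary when necessary. Your write-up is in fact more explicit than the paper's about the endpoint reduction and about why $p=q_{i_1}$, but the structure and the lemmas invoked are the same.
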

\begin{proof}
By Lemma \ref{peter} we can find an appropriate $w_1 = (a, b) \in \Z \times \R.$
By Lemma \ref{ne} we can find $w_i' \in (a+i-1) \times \R$ such that $g_x(w_i') = g_x(v).$
If for some $i$ the points $w_i'$ and $w_{i'+1}$ do not satisfy the last condition,
then Lemma \ref{paul} ensures we can insert an intermediate point $w''$ such
that
\begin{enumerate}
\item $g (w'') = g (v)$, and
\item there exist $m, n \in \Z$ such that
$$w',w'' \in [2m,2m+2] \times [2n,2n+2]$$
and
$$w_i'', w_{i+1}'	\in  [2m, 2m + 2] \times  [2n - 2, 2n]$$
 or vice versa.
 \end{enumerate}
Then Lemma \ref{peter} implies that there exists $i$, $m$ and $n$ such that
$w_i' , v' \in  [2m, 2m + 2]  \times [2n, 2n + 2].$ This completes the proof.
\end{proof}

\begin{pfoflem}{\ref{easter}}
To show that $M$ is a shift invariant bijection we need to establish the following properties.
\begin{itemize}
\item $M(\dominotilings^*)\subset S$,
\item $M$ is 1-1
\item $M$ is shift invariant.
\item $M$ is invertible.
\end{itemize}
The first three follow straight  from the definition of $M$, the coloring rule and the adjacency rule.  To show that $M$ is invertible we first show that any element of $s \in S$ generates a complete domino tiling $x_s$ of the plane.  Then we show that the coloring of $s$ satisfies the coloring rule of Definition \ref{vancouver}.

Any horizontal line segment from $(i,j)$ to $(i+1,j)$ on the boundary of a tile $D$ which bisects a domino has three elements of $\halfD$ in its interior (as $g$ changes by 3 across such an edge), while a horizontal line segment which does not bisect the boundary of a domino has no elements of $\halfA$ in its interior(as $g$ changes by 1 across such an edge).

Likewise any vertical line segment from $(i,j)$ to $(i,j+1)$ on the boundary of a tile $A$ which bisects a domino has zero or six elements of $\halfA$ in its interior (as $g$ changes by 1 or 7 across such an edge) while a vertical line segment which does not bisect the boundary of a domino has two or four elements of $\halfA$ in its interior (as $g$ changes by 3 or 5 across such an edge).
Thus if two tiles can be placed next to each other under the adjacency rule then their domino tilings are consistent and every element of $s$ generates a domino tiling $x_s$ of the plane.

%

Now we verify the coloring rule.  For every $v,v' \in (\Z \times \R) \cup (\R \times \Z)$
with
$g_{x_s}(v)=g_{x_s}(v')$
by Lemma \ref{mary}  there is a sequence $w_i \in \Z^2$ with

\begin{enumerate}
\item $v=w_0$,
\item $v'=w_k$ and
\item $g_{x_s}(w_i)=g_{x_s}(w_{i+1})$ for all $i=0,1,\dots,k-1$ and
\item  for all $i=0,1,\dots,k-1$ there exist $m,n \in \Z$  such that
$$w_i,w_{i+1} \in [2m,2m+2]\times[2n,2n+2].$$
\end{enumerate}

Then the last two conditions along with the adjacency rule imply that
$C(w_i)=C(w_{i+1})$ for every $i=0,\dots,k-1.$ Thus $C(v)=C(v')$ and the coloring rule of Definition \ref{vancouver} is satisfied.
Thus $M^{-1}(s)$ exists and $M$ is a shift invariant bijection.
\end{pfoflem}
%


\section{Entropy} \label{entropy}

 Up until now we have treated $S$ as just a topological object.  Using $M$ we can put a measure on $S$ by looking at the pushforward of $\mu^*$.  To minimize notation we refer to the pushforward of $\mu^*$ as $\mu^*$ as well.  We will show that $\mu^*$ is a measure of maximal entropy for our subshift of finite type.

 Let $\num(N)$ be the number of domino tilings of $[-N,N]^2.$ Then there exists $h>0$ such that
$$\lim_{N \to \infty} \frac1{4N^2} \log \num(N) \to h(\text{domino}).$$

\begin{lemma} \label{topologicalentropy}
The topological entropy of the colored domino shift is four times
the entropy of the domino shift.
\end{lemma}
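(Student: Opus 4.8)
The plan is to count admissible words in the alphabet $A$ directly, using the bijection $M$ of Lemma \ref{easter}, and to show that the coloring contributes only a term linear in $k$; the factor of four will then come purely from the $2\times 2$ blocking built into the definition of $M$. Concretely, I would fix the window $[-k,k]^2$ and invoke Remark \ref{consequencesoftwo} with $N=k$: an admissible word on $[-k,k]^2$ is determined by a domino tiling of the square $\Lambda_k=[-2k,2k+2]^2$ together with a choice of color for each value in $\{\min g_x(v),\dots,\max g_x(v)\}$, where the min and max are taken over $v\in\Lambda_k$. The proof of Lemma \ref{easter} shows the adjacency rule forces exactly such a consistent tiling on $\Lambda_k$, while Lemma \ref{mary} shows that within $\Lambda_k$ every point of a given level set of $g_x$ must receive the same color and distinct level sets are colored independently. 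Hence, writing $r(x)$ for the number of distinct integers in the range of $g_x$ on $\Lambda_k$, the number of valid colorings of a fixed tiling $x$ is $2^{r(x)}$, and
$$\text{Admissible}(k)=\sum_{x} 2^{r(x)},$$
the sum ranging over domino tilings $x$ of $\Lambda_k$.

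Next I would bound $r(x)$ uniformly in $x$. By Corollary \ref{ten}, $|g_x(v)|\le 10n$ for $v\in[-n,n]^2$; taking $n=2k+2$ (so that $\Lambda_k\subseteq[-n,n]^2$) gives $r(x)\le 20(2k+2)+1=O(k)$, i.e. linear in $k$. Since each tiling admits at least one coloring and at most $2^{O(k)}$, this sandwiches the count between the number of tilings of $\Lambda_k$ and that number times $2^{O(k)}$. As $\Lambda_k$ is a translate of $[-(2k+1),2k+1]^2$, translation invariance of the tiling count gives that the number of tilings of $\Lambda_k$ equals $\num(2k+1)$, so
$$\log_2\num(2k+1)\ \le\ \log_2\text{Admissible}(k)\ \le\ \log_2\num(2k+1)+O(k).$$

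Finally I would divide by $(2k+1)^2$ and let $k\to\infty$. The coloring term is $O(k)/(2k+1)^2\to 0$, so the topological entropy of the colored domino shift equals $\lim_{k}(2k+1)^{-2}\log_2\num(2k+1)$. The region $[-(2k+1),2k+1]^2$ has area $4(2k+1)^2$, which is four times the number $(2k+1)^2$ of symbols in the window, so by the definition of $h(\text{domino})$ this limit is $4\,h(\text{domino})$.

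The step I expect to be the main obstacle is establishing that the coloring is genuinely lower order, i.e. that $r(x)=O(k)$ uniformly together with the ``one color per level set'' structure; this is precisely where Corollary \ref{ten} and Lemma \ref{mary} are needed, and it is what prevents the coloring from adding positive entropy. A secondary, routine point is that the exponential growth rate of the tiling count is insensitive to the mild reshaping of the square between $\Lambda_k$ and $[-N,N]^2$; here I handle this by identifying $\Lambda_k$ with a translate of $[-(2k+1),2k+1]^2$, so that the growth rate is read off directly from the limit defining $h(\text{domino})$.
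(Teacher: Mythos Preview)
Your proposal is correct and follows essentially the same approach as the paper's own proof: bound the admissible words on $[-k,k]^2$ above and below by $\num(2k+1)$ times a factor $2^{O(k)}$ coming from the coloring, then divide by $(2k+1)^2$ and use that the linear coloring term vanishes in the limit. The paper invokes Remark \ref{consequencesoftwo} and Lemma \ref{ne} directly to obtain the explicit bound $2^{40N+21}$ on the number of colorings, whereas you route the same estimate through Corollary \ref{ten} and add a justification via Lemma \ref{mary} for why the colorings are counted by $2^{r(x)}$; these are minor elaborations of the same argument.
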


\begin{proof}
By Remark \ref{consequencesoftwo}
$M(x,c)|_{[-N,N]^2}$ is determined by $$x|_{[-2N,2N+2]^2}$$ and
$$c|_{\{\min g_x(i,j),\dots,\max g_x(i,j)\}}$$
where the min and max are over all $(i,j)\in [-2N,2N+2]^2.$

By Lemma \ref{ne}
$$\{\min g_x(i,j),\dots, \max g_x(i,j)\} \subset [-20N,-20N+20]]$$
for all $(i,j)\in [-2N,2N+2]^2.$
Thus for every $N$
$$\left|S|_{[-N,N]^2}\right| \geq \num(2N+1)$$  and at most
$$\left|S|_{[-N,N]^2}\right| \leq \num(2N+1)2^{40N+21}.$$
The first statement implies that the entropy of the colored domino
tiling is at least four times the entropy of the domino tiling.  The second
implies
\begin{eqnarray*}
\lim_{N \to \infty}\frac{\left|S|_{[-N,N]^2}\right|}{(2N+1)^2}
& \leq &\lim_{N \to \infty}\frac{ \log \num(2N+1)2^{40N+21}  }{(2N+1)^2}\\
 &\leq & \lim_{N \to \infty}\frac{40N+21+\log \num(2N+1)}{4N^2}\\
 &=& \lim_{N \to \infty}\frac{\log \num(2N+1)}{4N^2}\\
 &=& 4h(\text{domino})
\end{eqnarray*}
  which proves the lemma.
\end{proof}

By Lemma \ref{topologicalentropy} the measure $\mu^*$ is a measure of
maximal entropy.  As we could change $\prob^*$ to $\prob \times $
Bernoulli(1/3,2/3)  and not change the entropy of the colored domino
tiling, the subshift of finite type does not have a unique measure of maximal entropy.  But $\mu^*$ is in
some sense the most natural measure of maximal entropy. It is the
measure we get by generating measures $\hat \mu_N$ by putting equal mass on all
colored domino tilings of $[-N,N]^2$ and the taking the
limit as $N$ goes to $\infty$.  We can show that this limit converges by
the uniqueness of the measure of maximal entropy for the domino
tiling.

Let $\mu'$ be any weak limit of the sequence of measures above.  Then $\mu'$ is a measure of maximal entropy on $S$.  Then project $\mu'$ onto $\dominotilings$ to get
a measure $\tilde \mu$ on $\dominotilings$.  The entropy of $\tilde \mu$ is the same as the entropy of $\mu'$.  Thus $\tilde \mu$ is a measure of maximal entropy on $\dominotilings$.  As there is a unique such measure we have that $\tilde \mu=\mu$.

In every of our sequence of measures conditioned on a domino tiling the colors are independent with all colorings equally likely.
We say $B \subset \dominotilings$ is a {\bf cylinder set determined by $[-n,n]^2$} if
$x \in B$ is determined by $x(u)$ for all $ u \in [-n,n]^2$.  We
write $B \in \cyl_{[-n,n]^2}$.
Let $B$ be a cylinder set for dominoes and $c_1$ and $c_2$ be colorings of the colored points in $B$.  Then
$\hat \mu_N(B,c_1)=\hat \mu_N(B,c_2)$.  This carries over to any weak limit.  Thus $\mu'=\mu^*$.

\section{Completely positive entropy}
\label{isk}

In order to show that our subshift has completely positive entropy we take the following steps.

\begin{itemize}
\item We take a domino configuration $x \in \dominotilings$ and a cylinder set $B \in \cyl_{[-n,n]^2}$ and condition on
$$B\cap \tilde x_{2^kn}.$$

\item  Then we show in Lemma \ref{lotsofcycles} that the conformal invariance of the height function implies that for any $n$ and $m$ if $k$ is sufficiently large then for all $B$ and most $x$  the conditional probability  that the union of two domino tilings in $\set$ have at least $m$ cycles that surround $\sq_n$ and are inside $\sq_{2^kn}$ is close to 1.

\item Next in Lemmas \ref{cyclesmeanunpredictable} and \ref{lem:unpredictable} we show that this implies that for all $\epsilon>0$ if $k$ is sufficiently large then for all $j \in \Z$
 \begin{equation}\label{vonn}\prob\left(h_{x'}(-2^kn,-2^kn)=j\ |\ x' \in \set \right)<\epsilon.\end{equation}
\item  Finally we show how (\ref{vonn}) implies that the colored domino process has completely positive entropy.
\end{itemize}

When we defined the height function we arbitrarily chose to set $h_x(0,0)=0$ for all $x$.  Sometimes it is more convenient to chose to set the function to be zero at a point of the form $(-l,-l)$.  For this reason we
define
$$h_{x,l}(i,j)=h_x(i,j)-h_x(-l,-l).$$
Now define
$$ B^*_{x,m,n,k}=\bigg\{(x',y'):\ x' \cup y' \text{ has $\geq m$ cycles surrounding $\sq_n$ and }
 x',y' \in B \cap \tilde x_{2^kn}\bigg\}$$

Here is the key fact about the fluctuations of the height function
that we need in this paper.

\begin{lemma} \label{lotsofcycles}
Given any $n\in \N$, cylinder set $B \in \cyl_{[-n,n]^2}$  and $\delta>0$ there exists $k \in \N$ and
$G \subset \dominotilings$ with $\prob(G)>1-\delta$ such that
for all $x \in G$
$$\prob \times \prob\bigg(B^*_{x,m,n,k}\  | \    x,y \in B \cap \tilde x_{2^kn} \bigg) >1-\delta.$$
\end{lemma}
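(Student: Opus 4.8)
The plan is to manufacture the $\geq m$ surrounding cycles by working at the $k$ well-separated scales $N_i = 2^i n$, $i = 0, \dots, k-1$, whose annuli $\ann_{N_i}$ (lying between $\sq_{N_i}$ and $\sq_{2N_i} = \sq_{N_{i+1}}$) are pairwise disjoint. A cycle of $x' \cup y'$ contained in $\ann_{N_i}$ and surrounding its inner boundary automatically surrounds $\sq_n$, since $N_i \geq n$, and cycles living in different annuli are distinct; hence it suffices to produce at least $m$ indices $i$ whose annulus carries such a cycle. The engine for a single annulus is Lemma \ref{cor:kenyon}: at each scale with $N_i > N_0$ it gives a surrounding cycle with conditional probability exceeding the absolute constant $p$, provided the two height functions are small, bounded by $\delta_0 N_i$ where $\delta_0$ is the constant of that lemma, on $\sq_{N_i} \cup \sq_{N_{i+1}}$.

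To combine the scales I would pass to the conditional measure $\nu = \prob \times \prob(\,\cdot \mid x',y' \in \set)$ of the lemma and reveal the $\sigma$-algebra $\mathcal{H}$ generated by the restrictions of $h_{x'}$ and $h_{y'}$ to every contour $\sq_{N_i}$. Knowing $h_{x'}$ along the closed lattice loop $\sq_{N_i}$ is equivalent to knowing which edges of that loop are bisected by a domino, since the height jumps by $3$ across a bisected edge and by $1$ otherwise; so conditioning on $\mathcal{H}$ fixes all the dominoes crossing the contours, and by the spatial Markov property of the tiling the configurations of $(x',y')$ in the distinct annuli are then conditionally independent given $\mathcal{H}$. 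Declare scale $i$ \emph{good} if $N_i > N_0$ and $\sup_{v \in \sq_{N_i} \cup \sq_{N_{i+1}}} \max(|h_{x'}(v)|, |h_{y'}(v)|) < \delta_0 N_i$; goodness is $\mathcal{H}$-measurable, and for each good $i$ Lemma \ref{cor:kenyon} applies to the annulus, the extra conditioning on heights at the other scales being irrelevant since by the decoupling it is independent of the annulus interior given the two bounding contours. Consequently, conditionally on $\mathcal{H}$, the number of annuli carrying a surrounding cycle stochastically dominates a $\mathrm{Binomial}(G_{\mathrm{sc}}, p)$ variable, where $G_{\mathrm{sc}}$ is the number of good scales.

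It then remains to show that $G_{\mathrm{sc}}$ is large with high $\nu$-probability. First fix $M$ so large that a $\mathrm{Binomial}(M,p)$ variable is at least $m$ with probability exceeding $1 - \delta/2$; since $Mp \to \infty$ this is possible. The goal becomes $\nu(G_{\mathrm{sc}} \geq M) > 1 - \delta/2$, that is, that only a few scales are bad. A scale is bad when one of the height functions exceeds $\delta_0 N_i$ somewhere on $\sq_{N_i} \cup \sq_{N_{i+1}}$; because $\nu$ is a product of two copies of the finite-volume domino measure on $\sq_{2^k n}$ with the boundary data $h_x$ prescribed on $\sq_{2^k n}$ and the cylinder constraint $B$ near the origin, the known logarithmic fluctuation bounds for the height function give, uniformly in $k$ and over admissible small boundary data, a $\nu$-probability at most $q_i$ of scale $i$ being bad, with $\sum_i q_i \leq Q < \infty$. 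Defining $G \subset \dominotilings$ to be the set of $x$ whose boundary height $h_x|_{\sq_{2^k n}}$ is smaller than $\delta_0 2^k n / 2$, so that the outer boundary data is admissible, one has $\prob(G) > 1 - \delta$ for $k$ large, and for $x \in G$ a first-moment/Markov estimate gives $\nu(\#\{\text{bad scales}\} \geq M/2) < \delta/2$. Taking $k$ so large that the number of scales with $N_i > N_0$ minus $M/2$ is at least $M$ then forces $G_{\mathrm{sc}} \geq M$ off an event of $\nu$-probability $< \delta/2$, and assembling the two bounds yields $\nu(B^*_{x,m,n,k}) > 1 - \delta$.

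\textbf{Main obstacle.} The delicate point is the last paragraph: controlling the number of good scales under the conditional measure $\nu$. This requires height-fluctuation (large-deviation) estimates that are robust under the double conditioning on $\tilde x_{2^k n}$ and on the cylinder $B$, uniformly across all intermediate scales and in $k$ — the interior heights must stay $o(N_i)$ even though they are pinned only at the origin and at the far boundary $\sq_{2^k n}$. A secondary technical point is the conditional independence across annuli after revealing $\mathcal{H}$, which rests on reading the contour-crossing dominoes off the height values on each loop and then invoking the spatial Markov property; this must be checked to survive the conditioning that defines $\nu$.
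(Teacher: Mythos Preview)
Your overall architecture matches the paper's exactly: work at the dyadic scales $N_i=2^in$, apply Lemma~\ref{cor:kenyon} in each annulus whose bounding heights are small, and combine across scales using (conditional) independence of the annuli once the heights on all the contours $\sq_{N_i}$ are revealed. Your treatment of the independence step---reading off the contour-crossing dominoes from the height values and invoking the spatial Markov property---is in fact more carefully articulated than the paper's terse ``This happens independently for all such $l$.''

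Where you diverge from the paper is in controlling the number of good scales, and this is precisely the point you flag as the main obstacle. You appeal to quantitative (logarithmic) height-fluctuation bounds that must hold \emph{under the conditional measure} $\nu$, uniformly in $k$ and in the boundary data, and summably over scales. The paper avoids this entirely by a much softer device: it observes that $h_x(n,0)=\sum_{i=1}^{n}\bigl(h_{T_{\text{left}}^{i-1}x}(1,0)-h_{T_{\text{left}}^{i-1}x}(0,0)\bigr)$ is an ergodic sum with mean zero, so by the ergodic theorem $h_x(v)=o(|v|)$ for $\prob$-typical $x$. Repeated application along the contours then yields directly that for most $x$ and all but boundedly many $l$ the heights satisfy $\|h_l\|_\infty<\delta_0\,2^ln$ with high conditional probability; no summable tail bound is needed, because eventually \emph{every} large scale is good. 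This ergodic-theorem route dispenses with the uniformity-under-conditioning problem you identify, at the cost of giving no quantitative rate. Your approach would also work if the fluctuation estimates under $\nu$ can be justified, but it imports a heavier analytic input than the lemma actually requires.
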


\begin{proof}
For a point $x' \in B \cap \tilde x_{2^kn}$ we get a sequence of
functions
$$h_{x',n}|_{\sq_{n}},h_{x',2n}|_{\sq_{2n}},\dots,h_{x',2^kn}|_{\sq_{2^kn}}.$$
Thus the set $\tilde x_{2^kn}$ generates a measure on sequences of
functions. To independently sample $x'$ and $y'$ from $B \cap \tilde
x_{2^kn}$ we first independently sample $h_i:\sq_{2^in} \to \Z$ for $i
=0,\dots,k$ and $h'_i:\sq_{2^in} \to \Z$ for $i =0,\dots,k$
 according to this measure.
Then we sample $x'$ and $y'$ such that
$$h_{x',n}=h_0,  \dots , h_{x',2^kn}= h_{k}$$
and
$$h_{y',n}=h'_0, \dots , h_{y',2^kn}=h'_{k}.$$

Fix $\delta>0$ smaller than the $\delta$ in Theorem \ref{cor:kenyon}.
The difference
$$h_x(n,0)=\sum_{i=1}^{n}h_x(i,0)-h_x(i-1,0)
  =\sum_{i=1}^{n}h_{T^{i-1}_{\text{left}}(x)}(1,0)-h_{T^{i-1}_{\text{left}}(x)}(0,0)$$ is an ergodic sum.
Repeated uses of the ergodic theorem show that for most $x$  for all large $l$ the probability $||h_{l} ||_\infty<\delta2^ln$ is high.
This implies that for all $k$ sufficiently large for a set of $x$ of measure at least $1-\delta$ the sequences
$h_0,\dots,h_{k}$ and $h'_0,\dots,h'_{k}$
have the property that with probability $1-\delta/2$
$$\#\left( l| \ ||h_{l} ||_\infty, ||h_{l-1}||_\infty <\delta 2^ln \right)>k(1-\delta). $$

For $l$ with
$$||h_{l} ||_\infty, ||h_{l-1}||_\infty, ||h'_{l} ||_\infty, ||h'_{l-1}||_\infty  <\delta 2^ln$$
and $l$ large enough for Corollary \ref{cor:kenyon} to hold this implies that
there exists $p>0$ such that the probability that $x\cup y$ has a cycle in
$\ann_{l}$ is at least $p$.  This happens independently for all such $l$.

Thus for any $m$ we can choose $K$ such that for $k>K$ the set of $x$ such that
$$\prob \times \prob \bigg(B^*_{x,m,n,k} \  |
   x',y' \in B \cap \tilde x_{2^kn} \bigg) $$ is greater than $1-\delta$.
\end{proof}

\begin{lemma} \label{cyclesmeanunpredictable}
Given $k,m,n\in \N$, $x \in \dominotilings$ and cylinder set $B \in \cyl_{[-n,n]^2}$
\begin{equation}\label{halfpipe}
\mu\times \mu \left(h_{x',2^kn}(0,0)=h_{y',2^kn}(0,0)\bigg|\
 B^*_{x,m,n,k}
\right)<1/\sqrt{m}\end{equation}
\end{lemma}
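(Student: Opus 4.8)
The plan is to read the integer $h_{x',2^kn}(0,0)-h_{y',2^kn}(0,0)$ off the cycle structure of $x'\cup y'$ by means of Theorem~\ref{thm:kenyon2}, and then to bound the probability that it vanishes by an anticoncentration estimate for a sum of independent fair $\pm1$ signs.

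First I would rewrite the quantity of interest in terms of $\Delta:=h_{x'}-h_{y'}$. Since $h_{x',2^kn}(i,j)=h_{x'}(i,j)-h_{x'}(-2^kn,-2^kn)$ and likewise for $y'$,
\[
h_{x',2^kn}(0,0)-h_{y',2^kn}(0,0)=\Delta(0,0)-\Delta(-2^kn,-2^kn).
\]
Because $x'$ and $y'$ both lie in the cylinder $B\in\cyl_{[-n,n]^2}$, they carry the \emph{same} tiling on $[-n,n]^2$, so $x'\cup y'$ has no cycle meeting the interior of $\sq_n$; hence $(0,0)$ lies inside every cycle that surrounds $\sq_n$. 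Since the cycles counted in $B^*_{x,m,n,k}$ sit inside $\sq_{2^kn}$, the corner $(-2^kn,-2^kn)$ lies outside each of them. Thus every cycle surrounding $\sq_n$ separates $(0,0)$ from $(-2^kn,-2^kn)$, and on $B^*_{x,m,n,k}$ there are at least $m$ of these.

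Next I would apply Theorem~\ref{thm:kenyon2}. By parts (1) and (2), $\Delta$ is constant off the cycles of $x'\cup y'$ and jumps by $\pm4$ across each one, so
\[
\Delta(0,0)-\Delta(-2^kn,-2^kn)=4\sum_{j}\eps_j,
\]
the sum running over the cycles of $x'\cup y'$ that separate $(0,0)$ from the corner, with each $\eps_j\in\{-1,+1\}$. By part (3), conditioned on $x',y'\in\set$ and on the cycle set, the $\eps_j$ are mutually independent; and since swapping the tilings $x'$ and $y'$ inside a single cycle negates the corresponding $\eps_j$ while preserving $\mu\times\mu$, each $\eps_j$ is $\pm1$ with equal probability.

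Finally I would condition, within the event $\{x',y'\in\set\}$, on the full cycle configuration $\mathcal C$ of $x'\cup y'$; relative to this event, $B^*_{x,m,n,k}$ is exactly the union of those $\mathcal C$ having at least $m$ cycles surrounding $\sq_n$. Fix such a $\mathcal C$: it carries a set of at least $m$ separating signs, and the event $h_{x',2^kn}(0,0)=h_{y',2^kn}(0,0)$ is $\{\sum_j\eps_j=0\}$. Selecting $m$ of the separating signs and conditioning on all the others, what remains is that a sum of $m$ independent fair $\pm1$ variables hit a fixed integer, an event of probability at most the central value $\binom{m}{\lfloor m/2\rfloor}2^{-m}$. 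Using $\binom{2r}{r}4^{-r}\le 1/\sqrt{\pi r}<1/\sqrt{2r}$ one checks $\binom{m}{\lfloor m/2\rfloor}2^{-m}<1/\sqrt m$ for every $m$, so the conditional probability given $\mathcal C$ is $<1/\sqrt m$. Averaging over the conditional law of $\mathcal C$ given $B^*_{x,m,n,k}$ then yields (\ref{halfpipe}). The main obstacle is the geometric bookkeeping of the second and third steps, namely confirming that the $m$ cycles supplied by $B^*_{x,m,n,k}$ truly separate $(0,0)$ from $(-2^kn,-2^kn)$ and that their signs are independent and fair under the conditioning; once that is in place the anticoncentration bound is routine.
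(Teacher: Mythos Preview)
Your proposal is correct and follows essentially the same route as the paper: decompose $B^*_{x,m,n,k}$ according to the cycle configuration of $x'\cup y'$, invoke Theorem~\ref{thm:kenyon2} to see that $h_{x',2^kn}(0,0)-h_{y',2^kn}(0,0)$ is $4$ times a sum of independent fair $\pm1$ signs (one per separating cycle), bound the probability that this sum vanishes by $\binom{m}{\lfloor m/2\rfloor}2^{-m}<1/\sqrt m$, and then average over cycle configurations. The paper's own proof is terser---it simply asserts the $1/\sqrt m$ bound on each piece $B_{x,2^kn,\cycles}$ as a consequence of Theorem~\ref{thm:kenyon2} and then averages---so your write-up actually supplies the anticoncentration details that the paper leaves implicit.

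One small correction: a cylinder set $B\in\cyl_{[-n,n]^2}$ is merely \emph{measurable} with respect to the restriction to $[-n,n]^2$, so $x',y'\in B$ does not force them to agree there. Fortunately your argument does not need this: that $(0,0)$ lies inside every cycle surrounding $\sq_n$ is immediate from the meaning of ``surrounds,'' and the possible presence of additional separating cycles (not surrounding $\sq_n$) is already handled by your device of fixing $m$ of the signs and conditioning on the rest.
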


\begin{proof}
Let $\cycles$ be any set of at least $m$ cycles that surround $\sq_n$ and are inside $\sq_{2^kn}$.  Let  $B_{x,2^kn,\cycles}$ be the set of pairs $(x',y')$ such that $(x',y') \in B \cap \tilde x_{2^kn}$
and that the set of cycles in $x' \cup y'$ that surround $\sq_n$ is $\cycles$.  Then by Theorem \ref{thm:kenyon2} we have that the conditional probability that
\begin{equation}\label{relay}\mu \times \mu \left( h_{x',2^kn}(0,0)=h_{y',2^kn}(0,0)\bigg|
  B_{x,2^kn,\cycles} \right) < 1/\sqrt{m}.
\end{equation}
We have that
 $$B^*_{x,m,n,k}=\bigcup_{\cycles}B_{x,2^kn,\cycles}$$
so by Bayes' rule the conditional probability on the left hand side of (\ref{halfpipe}) is a weighted average of conditional probabilities of the form that are on the left hand side of (\ref{relay}).  As all of these are less than $1/\sqrt{m}$, this proves the lemma.
\end{proof}

\begin{lemma} \label{lem:unpredictable}
Given $n\in \N$, cylinder set $B \in \cyl_{[-n,n]^2}$  and $\epsilon>0$
there exists $K$  and a set $G \subset
\dominotilings$ with $\mu(G)>1-\epsilon$ such that for all $k>K$, all $x \in G$ and all $j \in \Z$

\begin{equation} \label{adams}
\prob\bigg(h_{x',2^kn}(0,0)=j \big|
 x' \in B \cap \tilde x_{2^kn} \bigg)<\epsilon.
 \end{equation}
\end{lemma}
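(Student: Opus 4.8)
The plan is to deduce the pointwise bound (\ref{adams}) from the collision bound of Lemma \ref{cyclesmeanunpredictable} by a second-moment (collision probability) argument. Fix $x\in\dominotilings$ and $k\in\N$, and write $\nu_x$ for the law of the integer-valued random variable $h_{x',2^kn}(0,0)$ when $x'$ is drawn from $\prob(\cdot\mid x'\in\set)$. Conditioning the product measure $\prob\times\prob$ on the event $\{x',y'\in\set\}$ makes $x'$ and $y'$ independent, each with law $\prob(\cdot\mid x'\in\set)$, so the collision probability factors through $\nu_x$:
$$\prob\times\prob\big(h_{x',2^kn}(0,0)=h_{y',2^kn}(0,0)\mid x',y'\in\set\big)=\sum_{j\in\Z}\nu_x(j)^2\ \ge\ \Big(\max_{j\in\Z}\nu_x(j)\Big)^2.$$
Hence it suffices to force the collision probability below $\epsilon^2$: then $\max_j\nu_x(j)<\epsilon$, which is exactly (\ref{adams}).

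Next I would bound the collision probability by decomposing over the cycle event $B^*_{x,m,n,k}$. Writing $P=\prob\times\prob(\cdot\mid x',y'\in\set)$ and noting that $B^*_{x,m,n,k}\subset\{x',y'\in\set\}$, the trivial decomposition gives
$$P\big(h_{x',2^kn}(0,0)=h_{y',2^kn}(0,0)\big)\le \prob\times\prob\big(h_{x',2^kn}(0,0)=h_{y',2^kn}(0,0)\mid B^*_{x,m,n,k}\big)+P\big((B^*_{x,m,n,k})^c\big).$$
By Lemma \ref{cyclesmeanunpredictable} (inequality (\ref{halfpipe})) the first term is at most $1/\sqrt{m}$ for every $x$ and every $k$. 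For the second term I would invoke Lemma \ref{lotsofcycles}, whose proof in fact produces, for a given $m$ and tolerance $\delta$, a single threshold $K$ and a single set $G$ with $\prob(G)>1-\delta$ such that $P((B^*_{x,m,n,k})^c)<\delta$ for all $x\in G$ and all $k>K$.

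Finally I would choose the parameters. Assuming $\epsilon<1$ (otherwise the bound is immediate), pick $m$ with $1/\sqrt m<\epsilon^2/2$, and apply Lemma \ref{lotsofcycles} with this $m$ and with $\delta=\epsilon^2/2$, obtaining $G$ and $K$. For every $x\in G$ and $k>K$ the two displayed bounds give a collision probability below $1/\sqrt m+\delta<\epsilon^2$, so $\max_j\nu_x(j)<\epsilon$, which is (\ref{adams}); moreover $\prob(G)>1-\delta\ge 1-\epsilon$, as required.

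The one point that needs care—and the conceptual heart of the argument—is that Lemma \ref{cyclesmeanunpredictable} conditions on the \emph{joint} event $B^*_{x,m,n,k}$, whose defining cycle condition lives in $x'\cup y'$ and is therefore not a product event, so it cannot be read directly as a statement about the single-sample law $\nu_x$. Routing everything through the collision probability $\sum_j\nu_x(j)^2$ is exactly what lets the joint cycle hypothesis and the single-sample conclusion communicate: this quantity is simultaneously a functional of $\nu_x$ (bounding its maximum) and an event on the product space, to which the decomposition over $B^*_{x,m,n,k}$ legitimately applies. Everything else is bookkeeping in the choice of $m$ and $\delta$.
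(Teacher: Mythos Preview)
Your proof is correct and follows essentially the same route as the paper: bound $\max_j \nu_x(j)$ by the square root of the collision probability $\sum_j \nu_x(j)^2$, then bound the latter via the decomposition over $B^*_{x,m,n,k}$ using Lemmas \ref{cyclesmeanunpredictable} and \ref{lotsofcycles}. Your explicit observation that one must read the ``for all $k>K$'' conclusion from the \emph{proof} of Lemma \ref{lotsofcycles} (its statement only yields a single $k$) is a point the paper leaves implicit, and your final paragraph nicely isolates why the collision-probability detour is not merely convenient but necessary.
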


\begin{proof}

Choose $m \in \N$ and $\delta$ such that
$$1/\sqrt{m}+\delta<\epsilon^2.$$

Then we get
\begin{eqnarray}\nonumber
\lefteqn{\prob\bigg(h_{x',2^kn}(0,0)=j \big| x' \in B \cap \tilde x_{2^kn} \bigg)^2}&&\\
&<&\sum_{j' \in\Z} \prob\bigg(h_{x',2^kn}(0,0)=j' \big| x' \in B \cap \tilde x_{2^kn} \bigg)^2 \nonumber\\
 &=&\prob \times \prob \left( h_{x',2^kn}(0,0)=h_{y',2^kn}(0,0) \big | x',y' \in B\cap \tilde x_{2^kn}\right)  \label{newadams}\\
%
 &\leq& \prob \times \prob \left( h_{x',2^kn}(0,0)=h_{y',2^kn}(0,0)\big| B^*_{x,m,n,k}\right)
  \nonumber\\
&&+ \prob \times \prob ((B^*_{x,m,n,k})^C \ | \ x',y' \in B\cap \tilde x_{2^kn}) \label{donovan}\\
&\leq&1/\sqrt{m}+\delta \label{mcnabb} \\
&\leq & \epsilon^2. \nonumber
\end{eqnarray}
where (\ref{donovan}) is an application of Bayes' rule and (\ref{mcnabb}) follows from Lemmas \ref{cyclesmeanunpredictable} and \ref{lotsofcycles}. Taking square roots completes the proof.

\end{proof}

 Now that we have established this lemma we can combine this with Theorem \ref{dominoisbernoulli} to prove that our subshift of finite type has completely positive entropy.

\begin{theorem}
The colored domino tiling $(\dominotilings^*,\prob^*,T_{\text{left}}^*,T_{\text{down}}^*)$ has completely positive entropy.
\end{theorem}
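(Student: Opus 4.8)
The plan is to show that the Pinsker $\sigma$-algebra $\Pi$ of $(\dominotilings^*,\prob^*,T_{\text{left}}^*,T_{\text{down}}^*)$ is trivial, which is equivalent to completely positive entropy: a factor has zero entropy precisely when it is contained in $\Pi$, so CPE holds iff $\Pi$ is trivial. The starting point is that the underlying domino factor already has this property. The projection $\pi:\dominotilings^* \to \dominotilings$ is a factor map onto the domino system run by $T_{\text{left}}^2$ and $T_{\text{down}}^2$; since Theorem \ref{dominoisbernoulli} gives that $(\dominotilings,\prob,T_{\text{left}},T_{\text{down}})$ is $\Z^2$ Bernoulli, its restriction to the finite-index subgroup $2\Z\times 2\Z$ is again Bernoulli, hence a K-system with trivial Pinsker algebra.

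First I would dispose of the domino part of $\Pi$. The image $\pi(\Pi)$ is a shift-invariant, zero-entropy sub-$\sigma$-algebra of the Bernoulli domino system, so it is trivial. Thus $\Pi$ carries no information about the tiling $x$: any nontrivial $\Pi$-measurable event must be distinguished by the coloring. By Lemma \ref{topologicalentropy} the coloring is only a zero-entropy extension, so it is exactly here that one must rule out the coloring generating a nontrivial deterministic (zero-entropy) factor. This is the crux, and it is where equation (\ref{vonn}) enters.

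The mechanism I would use is that a zero-entropy factor is deterministic, so a nontrivial $\Pi$-measurable color observable would have to be predictable from exterior data, and I will show (\ref{vonn}) forbids this. Recall $g_x(i,j)=h_x(i,j)+4j$, and under the bijection $M$ the color recorded at lattice site $(i,j)$ is a window of the i.i.d.\ fair coloring $c$ around the index $g_x(2i,2j)$. Reading the color near the far corner $(-2^kn,-2^kn)$ therefore samples $c$ at the index $g_x(-2^kn,-2^kn)=h_x(-2^kn,-2^kn)-8\cdot 2^kn$. By (\ref{vonn}), conditioned on the tiling outside $\sq_{2^kn}$ together with any cylinder $B\in\cyl_{[-n,n]^2}$, the height $h_{x'}(-2^kn,-2^kn)$ is spread over at least $\sim 1/\epsilon$ values each of probability below $\epsilon$; since $c$ is independent of $x$ and i.i.d.\ fair and this index ranges over a set growing with $k$, with high probability it lands on a coordinate of $c$ disjoint from any fixed finite window already revealed. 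Hence the far-corner color is asymptotically uniform and independent of the exterior-plus-local data. Because these corners escape to infinity and the alphabet partition is a generator, this independence should propagate to show that the generating partition is independent of $\Pi$; as $\Pi$ is invariant, it is then independent of the entire $\sigma$-algebra and therefore trivial, which gives CPE.

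\emph{Main obstacle.} The hard part is not the geometric input, which is packaged in (\ref{vonn}), but the passage from an abstract zero-entropy factor to the concrete exterior $\sigma$-algebras on which (\ref{vonn}) gives control: one must argue that a nontrivial factor inside $\Pi$ would necessarily predict a far-corner color, and then convert the unpredictability of $h_x$ into genuine conditional independence of the color, uniformly over the conditioning and using crucially that $c$ is i.i.d.\ and independent of the tiling. Making the determinism of zero-entropy factors interface cleanly with the receding exterior regions is the step that will require the most care.
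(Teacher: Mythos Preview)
Your plan contains the right ingredients (the Bernoulli property of the domino factor and the height unpredictability of Lemma~\ref{lem:unpredictable}), but the route you propose has a genuine gap and diverges from the paper's argument in a way that matters.

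\textbf{The gap.} From ``$\pi(\Pi)$ is trivial'' you conclude that ``$\Pi$ carries no information about the tiling $x$.'' What $\pi(\Pi)$ trivial actually gives you is that $\Pi\cap\pi^{-1}(\mathcal{B}_X)$ is trivial: no $\Pi$-measurable set is purely tiling-measurable. That is much weaker than independence of $\Pi$ from the tiling. Two invariant sub-$\sigma$-algebras can have trivial intersection without being independent, and $\Pi$ could in principle consist of sets that entangle $x$ and $c$ nontrivially. So the reduction ``any nontrivial $\Pi$-measurable event must be distinguished by the coloring'' does not follow, and the subsequent argument about corner colors is launched from an unjustified premise. (There is also a small confusion in that passage: the far corner $(-2^kn,-2^kn)$ lies on the boundary, so its colored data is part of what you are conditioning on; what is unpredictable is the shift between the exterior indexing of $c$ and the origin's indexing, i.e.\ $h_{x',2^kn}(0,0)$.)

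\textbf{How the paper proceeds instead.} The paper does not analyze $\Pi$ abstractly at all. It invokes Conze's criterion \cite{C}, which characterizes completely positive entropy for $\Z^d$ actions by a concrete ``trivial exterior tail'' condition: for every $n$ and $\epsilon>0$ there is $N$ so that, for most $(x,c)$, the conditional distribution of any cylinder event in $[-n,n]^2$ given $(\tilde x,c)_N$ is $\epsilon$-close to its unconditional probability. This is exactly the bridge you flag as the ``main obstacle'' and leave unresolved. Once one has this criterion the proof splits cleanly: the domino part of the conditional is close to $\mu(B)$ by the Bernoulli/K property of dominoes via \cite{dHS}, and the color part is handled by combining Lemma~\ref{lem:unpredictable} (the height shift is spread over at least $1/\epsilon$ values) with the weak law of large numbers applied to the i.i.d.\ sequence $c$, so that for most $c$ a nearly-uniform random translate of a fixed window of $c$ is nearly uniform on $\{1,2\}^{[-20n,20n+2]}$. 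Your proposal never introduces either the Conze criterion or the WLLN step, and without them the passage from~(\ref{vonn}) to triviality of $\Pi$ remains a heuristic.
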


\begin{proof}
Let
$$(\tilde x,c)_{N}=\{(x',c'):\ (x',c')_{(i,j)}=(x,c)_{(i,j)}\ \forall (i,j) \not \in [-N,N]^2 \}.$$
By \cite{C} to prove the theorem it suffices to show  that for all $n$
and $\epsilon>0$ there exists $N$ and a set of  $(x,c)$ of
measure at least $1-\epsilon$ such that for all $E \in
\{1,2\}^{[-20n,20n+2]}$ and cylinder sets $B \in Cyl_{[-n,n]^2}$
$$\bigg|\prob\bigg((x,d): x \in B \text{ and } d \in E \big| (\tilde x,c)_{N} \bigg)-\mu(B)2^{-40n-3}\bigg|<\epsilon.$$

We have that
\begin{eqnarray*}
&&\hspace{-1in}\prob\bigg((x,d): x \in B \text{ and } d \in E \bigg| (\tilde x,c)_{N}  \bigg)\\
&=&\prob\left(x \in B \bigg| \tilde x_{N}  \right)\cdot
\prob\left(d \in E \bigg| (\tilde x,c)_{N} \cap B  \right).
\end{eqnarray*}
By Theorem \ref{dominoisbernoulli} the domino tiling is isomorphic to a Bernoulli shift thus it has completely positive entropy. Thus by \cite{dHS} for any $\delta>0$ we can chose $N$ large enough so that there exists a set of $h$ of measure at least $1-\delta$ such that
\begin{equation} \label{zionisthoodlums}
\bigg|\prob\left(x \in B \big| \tilde x_{N}  \right)-\mu(B)\bigg|<\delta
\end{equation}
for all $B$.

Thus we need to show that for all $\delta>0$ there exists $N$ large enough so that for most $x \in \dominotilings$ and
$c \in \{1,2\}^{\Z}$ and all $E\in \{1,2\}^{[-20n,20n+2]}$ and cylinder sets $B$
\begin{equation}\label{warhol}
\bigg|\prob\big(d \in E \big| (\tilde x,c)_{N} \cap B  \big)-2^{-40n-3}\bigg|<\delta.
\end{equation}

By Lemma \ref{lem:unpredictable} for all $\delta'>0$ we can make $k$ large enough so that for most $x$ and all $B$
$$\max_j\prob\bigg(h_{x,2^kn}(0,0)=j \big| \tilde x_{2^kn} \cap B \bigg)<\delta'.$$
Thus we can choose $\delta'$ small enough so that the weak law of large numbers implies (\ref{warhol}) is satisfied for most $c$. For a sufficiently small choice of $\delta$ combining
(\ref{zionisthoodlums}) and (\ref{warhol}) proves the theorem.
\end{proof}


\section{Not Bernoulli}
\label{notbernoulli}

For $c,d \in \{1,2\}^N$ we define
$$\bar f_N(c,d)=1-\frac{j}{N}$$
where $j$ is the largest number such that there exists subsequences
$$1 \leq n_1 <n_2 < \dots < n_j \leq N$$ and
$$1 \leq m_1 <m_2 < \dots < m_j \leq N$$ such that
$c_{n_i}=d_{m_i}$ for all $i=1,\dots,j$. The same definition holds
in the case that one string has length less than $N$.

A simple combinatorial argument proves the following standard lemma.  The lemma is well known but we include the proof here for the sake of completeness.
\begin{lemma}\label{fbarlemma}
There exists $r>0$ such that for all $N$

\begin{equation} \label{luge}
{\colorprob}\times{\colorprob}(c,d:\ \bar f_N(c,d)<.01)<e^{-rN}.
\end{equation}
\end{lemma}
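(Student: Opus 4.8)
The plan is to recognize \eqref{luge} as a large-deviation bound on the longest common subsequence and to prove it by a first-moment (union bound) argument. First I would reinterpret the event: by definition $\bar f_N(c,d)<0.01$ is precisely the event that the longest common subsequence of $c$ and $d$ has length $j>0.99N$. Since truncating a common subsequence to its first $j_0:=\lceil 0.99N\rceil$ terms still leaves a common subsequence, the event $\{\bar f_N(c,d)<0.01\}$ is contained in the event that $c$ and $d$ admit \emph{some} common subsequence of length exactly $j_0$. It therefore suffices to bound the probability of the latter.

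Next I would count. A common subsequence of length $j_0$ is specified by increasing index sets $1\le n_1<\cdots<n_{j_0}\le N$ in $c$ and $1\le m_1<\cdots<m_{j_0}\le N$ in $d$, together with the requirement $c_{n_i}=d_{m_i}$ for all $i$. For a fixed pair of index sets, the events $\{c_{n_i}=d_{m_i}\}_{i=1}^{j_0}$ are mutually independent under $\colorprob\times\colorprob$ — each coordinate of $c$ appears in at most one constraint (the $n_i$ are distinct), likewise for $d$, and $c,d$ are independent — and each has probability $1/2$. Hence a fixed pair of index sets gives a common subsequence with probability $2^{-j_0}$. As there are $\binom{N}{j_0}$ choices for each index set, the union bound yields
$$\colorprob\times\colorprob\big(\bar f_N(c,d)<0.01\big)\le \binom{N}{j_0}^2 2^{-j_0}.$$

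Finally I would estimate the right-hand side with the standard entropy bound $\binom{N}{\beta N}\le 2^{NH(\beta)}$, where $H(\beta)=-\beta\log_2\beta-(1-\beta)\log_2(1-\beta)$, applied with $\beta=0.01$ so that $j_0=(1-\beta)N$ up to rounding. This gives a base-$2$ exponent of $N\big[2H(0.01)-0.99\big]$ plus lower-order terms. The one quantitative point to verify — and the only real content of the argument — is that the entropy cost $2H(0.01)$ of selecting the two index sets is outweighed by the matching improbability: a direct computation gives $H(0.01)\approx 0.081$, so $2H(0.01)\approx 0.162<0.99$ and the exponent is negative with a large margin. Taking $r$ to be any positive constant strictly less than $\big(0.99-2H(0.01)\big)\ln 2$ then produces the bound $e^{-rN}$ for all large $N$; shrinking $r$ slightly absorbs the rounding in $j_0$ and covers the finitely many small $N$. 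The generous slack here is exactly why the threshold $0.01$ is unproblematic, so I do not anticipate any genuine obstacle beyond this elementary estimate.
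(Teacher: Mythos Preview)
Your proof is correct and follows essentially the same union-bound argument as the paper: the paper parametrizes by the complements $A,B$ of the matched index sets (of size $\le .01N$) rather than the matched sets themselves, and uses the cruder count $2^{N/3}$ in place of your entropy bound $2^{NH(.01)}$, but since $\binom{N}{j_0}=\binom{N}{N-j_0}$ these are the same computation.
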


\begin{proof}
For any set $A \subset \{1,2,\dots, N\}$ there exists a unique increasing subsequence $i^{A}_j$ whose union is the complement of $A$. For any $c,d$ with $\bar f_N(c,d) \leq .01$ there exists $A,B \subset \{1,2,\dots ,N\}$ with $|A|=|B|\leq .01N$ such that $c_{i^A_j}=d_{i^B_j}$ for all $j$.  By Tchebychev's inequality there exists $N_0$ such that for all $N>N_0$ there are at most $2^{N/3}$ subsets of $\{1,2,\dots,N\}$ of cardinality at most $.01N$. For any $c$ and $A,B$ with
$|A|=|B|<.01N$
$${\colorprob}(d:\  c_{i^A_j}=d_{i^B_j} \text{ for all } j)\leq 2^{-.99N}.$$
Then for $r'<.1$ and $N>N_0$
\begin{eqnarray*}
{\colorprob}\times{\colorprob}(c,d:\ \bar f_N(c,d)<.01)
  &\leq& \sum_{A,B}{\colorprob}(d:\  c_{i^A_j}=d_{i^B_j} \text{ for all } j)\\
  &\leq& 2^{2N/3}2^{-.99N}\\
  &<& e^{-r'N}.
\end{eqnarray*}
Since this holds for all $N$ sufficiently large there exists $r>0$ such that
(\ref{luge}) is true for all $N$.
\end{proof}

A slight generalization is that there exists $r>0$ such that for all $N$ sufficiently large
\begin{equation} \label{fbar}
\colorprob \times \colorprob\bigg((c,d):\ \exists a,b \in [-100N,100N]\text{ and } K >N \text{ such that }
   \bar f_K(\sigma^{a}c,\sigma^{b}d)<.01\bigg)<e^{-rN}.
\end{equation}

The main tool that we use in this lemma is that a good $\bar d_{[-N,N]^2}$ matching
generates a good $\bar f$ matching of the colorings.  We make that precise in the following lemma.

\begin{lemma} \label{quadruple}
If
$$\bar d_{[-N,N]^2}\left(\mu|_{(\tilde x,c)},\mu|_{(\tilde y,d)}\right)<.0001$$
then there exists $a,b$ and $K$ such that
 $-100N\leq a,b \leq  100N$  and $K>N$
   $$\bar f_K(\sigma^{a}c,\sigma^{b}d)< .01.$$
\end{lemma}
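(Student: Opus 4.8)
The plan is to unwind the definition of $\bar d$ and convert a small Hamming discrepancy between the two conditional window laws into a long common subsequence of the two color sequences. The first and most useful observation is that the color sequence is rigid under the conditioning: every configuration in the support of $\mu|_{(\tilde x,c)}$ has global color sequence exactly $c$. Indeed, the event $(\tilde x,c)_N$ fixes all alphabet symbols outside $[-N,N]^2$, and in particular it fixes every symbol in the full columns $i=\pm(N+1)$. Reading the colors of the colored points up the vertical line $\{2i\}\times\R$ recovers, by Lemma \ref{ne}, the value $c(k)$ for every integer height $k$ exactly once; hence the entire bi-infinite sequence $c$ is pinned down, and likewise $d$ is pinned by $(\tilde y,d)_N$. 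So inside the window only the domino tiling fluctuates, while the colors are always read off $c$ (respectively $d$) by height.

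Next I would extract a good pair and a good column. A joining of $\mu|_{(\tilde x,c)}$ and $\mu|_{(\tilde y,d)}$ witnessing $\bar d_{[-N,N]^2}<.0001$ has expected fraction of disagreeing alphabet symbols on $[-N,N]^2$ below $.0001$. By Markov's inequality, with large joining-probability a sampled pair $\big((x',c),(y',d)\big)$ disagrees on at most a tiny fraction of the $(2N+1)^2$ sites; averaging over the $2N+1$ columns and applying Markov once more, I can fix a single column $i_0\in[-N,N]$ on which the two configurations agree at all but a small fraction of the $j\in[-N,N]$. The thresholds are chosen so that this per-column disagreement sits comfortably below the level needed in the last step; the gap between $.0001$ and $.01$ leaves ample room.

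Finally I would read colors up this good column. For the $x'$-configuration, listing the colors of the colored points on $\{2i_0\}\times\R$ from bottom to top as $j$ runs over the window blocks yields a contiguous string $C^{x}=\big(c(k)\big)_{k\in I_x}$, that is, an initial segment of $\sigma^{a}c$ of length $K:=|I_x|$; by Corollary \ref{ten} the heights in $I_x$ have absolute value at most $20N+20$, so $|a|\le 100N$, and by Lemmas \ref{ne} and \ref{nbynw} each $2\times2$ block contributes between $4$ and $12$ colored points to the line, whence $K\ge 4(2N+1)>N$. Reading $y'$ gives in the same way a segment $C^{y}$ of $\sigma^{b}d$ with $|b|\le100N$. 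Now wherever the two alphabet symbols at $(i_0,j)$ agree, the tile and the relative coloring coincide, so the color-substrings that block $j$ contributes to $C^x$ and to $C^y$ are identical; matching these agreeing substrings in increasing order of $j$ produces an order-preserving common subsequence of $C^x$ and $C^y$ that omits only the symbols coming from the disagreeing blocks. Since each block contributes at most $12$ symbols while $K\ge4(2N+1)$, the omitted fraction is bounded by a fixed multiple of the per-column disagreement, which the constants keep below $.01$. This gives $\bar f_K(\sigma^{a}c,\sigma^{b}d)<.01$, as required.

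The heart of the matter, and the main obstacle, is that the two readouts need not have the same length or line up position by position: because $x'$ and $y'$ tile differently their colored points sit at different heights, so no Hamming comparison is available and one is forced into the subsequence metric $\bar f$. What rescues the argument is the rigidity established in the first step together with the fact that an \emph{agreeing} alphabet symbol forces an identical local color string; matched agreeing blocks then assemble, in order, into the desired common subsequence. The remaining work — the two Markov selections and the bounded colored-points-per-block count of Lemma \ref{nbynw} — is routine, but the constants must be tracked with enough slack that the input bound $.0001$ on $\bar d$ genuinely yields the output bound $.01$ on $\bar f$.
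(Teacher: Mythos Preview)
Your approach is essentially the paper's: extract a good pair from the joining, pass to a good column, read the colors up that column for each configuration, and use per-block alphabet agreement to assemble an order-preserving common subsequence of two contiguous color segments; the paper formalizes the block-matching via the map $F_{x',y',i}(r)=g_{y'}^{-1}(2i,g_{x'}(2i,r))$ and the auxiliary Lemmas~\ref{roy}--\ref{ninetysix}, but the content is the same.

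One small correction: a configuration $(x',c')\in(\tilde x,c)_N$ need not have $c'=c$; rather $c'=\sigma^{\Delta}c$ where $\Delta=h_x(2N{+}2,0)-h_{x'}(2N{+}2,0)$ depends on $x'$. The fixed symbols along an outside column pin down the colors \emph{as a function of position along the line}, but the indexing by height is shifted because $h_{x'}$ is normalized at the origin, which lies inside the window where $x'$ can differ from $x$. Since $|\Delta|=O(N)$ this extra shift is absorbed into your $a$, and the bound $|a|\le 100N$ survives with room to spare, so the argument goes through.
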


Before we prove this lemma we first introduce a few definitions and then prove a few quick lemmas.  Note that any $x, y$ and $i$ generate a natural bijection $F$ from $\R$ to $\R$ by
$$F(r)=F_{x,y,i}(r)=g^{-1}_{y}(2i,g_x(2i,r)).$$
\begin{lemma} \label{roy}
$F$ is increasing and
$$(1/7)|t'-t| \leq |F(t')-F(t)| \leq 7 |t'-t|.$$
\end{lemma}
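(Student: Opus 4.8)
The plan is to reduce the statement to the elementary fact that each vertical slice of $g$ is a piecewise-linear strictly increasing bijection of $\R$ whose slopes are pinched between $1$ and $7$. Fix $x,y \in \dominotilings$ and $i \in \Z$, and write $\phi_x(r) = g_x(2i,r)$ and $\phi_y(r) = g_y(2i,r)$. Since $2i \in \Z$, the vertical line $\{2i\} \times \R$ lies in the wireframe, so both functions are defined and, being extended by linearity between consecutive integer heights, are piecewise linear with breakpoints at the integers. By Lemma \ref{ne} we have $1 \le \phi_x(j+1) - \phi_x(j) \le 7$ for every $j \in \Z$, so every linear piece of $\phi_x$ has slope in $[1,7]$; in particular $\phi_x$ is continuous, strictly increasing, and tends to $\pm\infty$, hence is a bijection of $\R$. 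The same holds for $\phi_y$. Thus $\phi_y^{-1}$ is defined on all of $\R$ and $F = \phi_y^{-1}\circ \phi_x$ makes sense.

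Monotonicity is then immediate: $F$ is a composition of two strictly increasing functions, hence strictly increasing.

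For the Lipschitz bounds I would track how each factor distorts distances. Because $\phi_x$ has all slopes in $[1,7]$, for $t < t'$ one has $|t'-t| \le \phi_x(t') - \phi_x(t) \le 7|t'-t|$. Because $\phi_y$ has all slopes in $[1,7]$, its inverse $\phi_y^{-1}$ has all slopes in $[1/7,1]$, so $\tfrac{1}{7}(s'-s) \le \phi_y^{-1}(s') - \phi_y^{-1}(s) \le s'-s$ whenever $s < s'$. Setting $s = \phi_x(t)$ and $s' = \phi_x(t')$ and composing the two chains of inequalities gives $\tfrac{1}{7}|t'-t| \le F(t') - F(t) \le 7|t'-t|$, which is the claim once we note that $F(t')-F(t) > 0$ so the difference equals its absolute value.

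There is no serious obstacle here; the content is entirely in Lemma \ref{ne}, which supplies exactly the slope bound $[1,7]$ and hence the reciprocal bound $[1/7,1]$ for the inverse. The only points requiring a little care are verifying that $\phi_x$ and $\phi_y$ are genuine bijections of $\R$, so that the composition is everywhere defined — this uses strict monotonicity together with the lower slope bound $1$, which forces growth to $\pm\infty$ — and bookkeeping the direction of each inequality so that the final constants come out as $1/7$ and $7$ rather than their reciprocals.
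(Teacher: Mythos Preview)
Your proof is correct and follows exactly the same approach as the paper: use Lemma~\ref{ne} to see that each vertical slice $r\mapsto g_x(2i,r)$ is piecewise linear with slope in $[1,7]$, and then read off that the composition $F=\phi_y^{-1}\circ\phi_x$ is increasing with derivative between $1/7$ and $7$. You have simply filled in the details (bijectivity of the slices, explicit tracking of the two inequalities) that the paper leaves implicit.
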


\begin{proof}
By Lemma \ref{ne} both $g_x(i,r)$ and $g_y(i,r)$ are piecewise linear increasing in $r$ and have derivatives between 1 and 7. This implies that $F$ is piecewise linear and increasing with derivative between 1/7 and 7.
\end{proof}

Define
$$\agree_{i,N}=\bigg\{j:\ (x',\sigma^{a}c)_{2i,2j} =
  (y',\sigma^{b}d)_{2i,2j}\bigg\}\cap[-N,N].$$

Let
$$\colors_{x',i,N}= \{g_{x'}(2i,-2N),\dots,g_{x'}(2i,2N+2)\}$$
and
$$\colors_{y',i,N}= \{g_{y'}(2i,-2N),\dots,g_{y'}(2i,2N+2)\}.$$
\begin{lemma}
$|\colors_{x',i,N}|,|\colors_{y',i,N}|\geq 4N+2$.
\end{lemma}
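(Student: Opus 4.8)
The plan is to read $\colors_{x',i,N}$ the same way $\colorsD$ was defined earlier, namely as $\text{Range}(g_{x'})\cap\Z$ along the vertical segment $\{2i\}\times[-2N,2N+2]$: it is the set of integer color values hit by $g_{x'}$ on that segment. By Lemma \ref{ne} the map $r\mapsto g_{x'}(2i,r)$ is continuous and increasing (its one-sided increments are all at least $1$), so its range over $[-2N,2N+2]$ is exactly the closed interval $[g_{x'}(2i,-2N),\,g_{x'}(2i,2N+2)]$, and $\colors_{x',i,N}$ is the set of integers lying in it. Since $g_{x'}$ takes integer values at lattice points, both endpoints are integers, and therefore
$$|\colors_{x',i,N}| = g_{x'}(2i,2N+2)-g_{x'}(2i,-2N)+1.$$

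First I would estimate this difference from below by telescoping:
$$g_{x'}(2i,2N+2)-g_{x'}(2i,-2N)=\sum_{r=-2N}^{2N+1}\big(g_{x'}(2i,r+1)-g_{x'}(2i,r)\big).$$
There are $4N+2$ summands, and each is at least $1$ by the lower bound $g_x(i,j+1)\geq g_x(i,j)+1$ of Lemma \ref{ne}. Hence the difference is at least $4N+2$, so $|\colors_{x',i,N}|\geq 4N+3\geq 4N+2$. Running the identical computation with $y'$ in place of $x'$ gives $|\colors_{y',i,N}|\geq 4N+2$, which establishes the lemma.

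I do not expect a genuine obstacle here: the statement is an immediate consequence of the monotonicity estimate already proved in Lemma \ref{ne}. The only points requiring care are the bookkeeping ones — first, justifying that distinct integer heights give rise to distinct colors, which is precisely what interpreting $\colors$ as $\text{Range}(g)\cap\Z$ encodes; and second, confirming that the vertical span from $-2N$ to $2N+2$ is partitioned into exactly $4N+2$ unit steps, so that the extra slack in the bound (the $4N+3$ we actually get versus the claimed $4N+2$) is harmless.
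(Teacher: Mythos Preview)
Your proof is correct and follows essentially the same approach as the paper: the paper's one-line proof simply states that the result ``follows from Lemma \ref{ne} as both $g_{x'}(2i,*)$ and $g_{y'}(2i,*)$ have derivative at least $1$,'' which is exactly what your telescoping argument makes explicit. Your observation that one actually obtains $4N+3$ is also correct, but the slack is harmless.
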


\begin{proof}
This follows from Lemma \ref{ne} as both $g_{x'}(2i,*)$ and $g_{y'}(2i,*)$ have derivative at least 1.
\end{proof}

Let
$$\agreedcolors_{x',i,N}=\left\{   g_{x'}(2i,l):\ l \in 2\agree_{i,N}+[0,2]\right\}\cap \Z$$
and
$$\agreedcolors_{y',i,N}=\left\{   g_{y'}(2i,l):\ l \in 2\agree_{i,N}+[0,2]\right\}\cap \Z.$$

\begin{lemma}
$$\big|\colors_{x',i,N} \setminus \agreedcolors_{x',i,N}\big|<(15)(2N+1-| \agree_{i,N} |).$$
\end{lemma}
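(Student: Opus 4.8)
The plan is to convert the count of colors into a count of heights and then localize each non-agreed color to a disagreeing column. First I would invoke Lemma \ref{ne}: since $g_{x'}(2i,\cdot)$ has slope at least $1$, it is strictly increasing on $[-2N,2N+2]$, so each color $k\in\colors_{x',i,N}$ is realized at a unique height $l$ with $g_{x'}(2i,l)=k$. Under this bijection between colors and their preimage heights, $k$ lies in $\agreedcolors_{x',i,N}$ precisely when its height $l$ lies in $\bigcup_{j\in\agree_{i,N}}[2j,2j+2]$; hence a color in $\colors_{x',i,N}\setminus\agreedcolors_{x',i,N}$ has its height in the complementary region.

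Next I would decompose the column $[-2N,2N+2]$ into the $2N+1$ segments $[2j,2j+2]$ with $j\in[-N,N]$, so that the non-agreeing region is covered by the disagreeing segments, those with $j\in[-N,N]\setminus\agree_{i,N}$, of which there are exactly $2N+1-|\agree_{i,N}|$. The local estimate I need is again from Lemma \ref{ne}: $g_{x'}$ rises by at most $7$ across each vertical unit edge and hence by at most $14$ across each segment $[2j,2j+2]$; since its endpoints $(2i,2j)$ and $(2i,2j+2)$ lie in $\Z^2$ and so carry integer $g_{x'}$-values, each segment supports at most $15$ colors. Summing over the disagreeing segments bounds $|\colors_{x',i,N}\setminus\agreedcolors_{x',i,N}|$ by $15(2N+1-|\agree_{i,N}|)$.

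The only delicate point, and the sole place where the strict inequality is in question, is the treatment of the colors sitting at the shared endpoints $g_{x'}(2i,2j)$ of consecutive segments. When a disagreeing segment abuts an agreeing one, the shared endpoint color already belongs to $\agreedcolors_{x',i,N}$ and is not counted on the left at all; when two disagreeing segments abut, their common endpoint is counted once, not twice. In either case each disagreeing segment contributes strictly fewer than $15$ new non-agreed colors, which yields the strict bound (provided, as in the intended application, that at least one index disagrees). Alternatively one can obtain strictness outright by using Lemma \ref{nbynw}: since $h_{x'}$ cannot increase by $3$ across two consecutive vertical edges, $g_{x'}$ rises by at most $12$ over each segment, giving at most $13$ colors per segment. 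I expect this endpoint bookkeeping to be the main obstacle; everything else is a direct application of the monotonicity and the slope bounds for $g_{x'}$.
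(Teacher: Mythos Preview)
Your argument is correct and follows exactly the paper's approach: localize each non-agreed color to a disagreeing block $[2j,2j+2]$ and bound the number of integer values of $g_{x'}(2i,\cdot)$ on each such block by $15$ via Lemma~\ref{ne}. The paper's proof is a two-sentence version of what you wrote; in particular it does not address the strict inequality at all, and your endpoint bookkeeping (or the sharper $13$-per-block bound via Lemma~\ref{nbynw}) is a genuine addition that the paper omits. Your caveat that strictness requires at least one disagreeing index is also correct---when $|\agree_{i,N}|=2N+1$ both sides vanish and the stated strict inequality fails, though this is harmless for the application.
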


\begin{proof}
By Lemma \ref{ne} for each $j \in \{-N,\dots,N \}$
there are at most 15 elements $t$ of
 $\colors_{x',i,N} $ such that there exists $z \in [j,j+2]$ with
 $$g_x(2i,z)=t.$$
Thus there are at most
$15(2N+1-|\agree_{i,N}|) $
elements of
 $$\colors_{x',i,N} \setminus \agreedcolors_{x',i,N}.$$
\end{proof}

This next lemma is the reason for our use of the notation $\agreedcolors_{x',i,N}.$

\begin{lemma} \label{ninetysix}
If $t \in  \agreedcolors_{x',i,N}$ then
 $$F(t) \in  \agreedcolors_{y',i,N}$$
and $c_{t}=d_{F(t)} .$
\end{lemma}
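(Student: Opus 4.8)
The plan is to realize $F$ as the order-preserving matching between the agreed colors of $x'$ and those of $y'$, and to extract the equality of the assigned colors directly from the fact that $j \in \agree_{i,N}$ forces the two colored tiles at $(i,j)$ to be \emph{literally identical}. First I would unpack the hypothesis: if $t \in \agreedcolors_{x',i,N}$, then by definition $t = g_{x'}(2i,l)$ for some $l \in [2j,2j+2]$ with $j \in \agree_{i,N}$ and $t \in \Z$. Thus $l$ sits on the left edge $\{2i\}\times[2j,2j+2]$ of the tile occupying the block $[2i,2i+2]\times[2j,2j+2]$, and $t$ is the global $g$-height of the corresponding colored point of $x'$.

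Next I would use the agreement at $(i,j)$. Membership $j \in \agree_{i,N}$ means $(x',\sigma^{a}c)_{i,j} = (y',\sigma^{b}d)_{i,j}$, so the two tiles share the same underlying element $D \in \tiles$ and the same color function on $\colorsD$. Since the dominoes agree, Lemma \ref{ne} shows that $g_{x'}$ and $g_{y'}$ have identical increments along the edge $\{2i\}\times[2j,2j+2]$, hence differ there only by the additive constant $\Delta = g_{y'}(2i,2j)-g_{x'}(2i,2j)$. Because $F$ is increasing and preserves $g$-height (Lemma \ref{roy}, together with $g_{y'}(2i,F(r)) = g_{x'}(2i,r)$), transporting the colored point of $x'$ at height $t$ across to $y'$ pins down the matching colored point; I would then check that its image lands on the portion of $y'$'s boundary recorded by $\agree_{i,N}$, so that $F(t) := g_{y'}(2i,F(l))$ is again an integer lying in $\agreedcolors_{y',i,N}$. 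Here Lemma \ref{peter} (the range of $g_D$ is already attained on the edge) and the same-$D$ structure do the work of guaranteeing both integrality and membership.

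Finally, the color equality $c_t = d_{F(t)}$ would follow from the equality of the two color functions on $\colorsD$: the value assigned by $x'$ at the colored point of local height $k$ equals the value assigned by $y'$ at the point of the same local height $k$, while the global coloring rule (Definition \ref{vancouver}) ensures each value depends only on the respective $g$-height, making the reading consistent with the transport by $F$. The step I expect to be the main obstacle is precisely the bookkeeping of the additive offset $\Delta$: one must verify that the $g$-height-preserving map $F$ genuinely sends the $x'$-point onto the agreed part of $y'$ \emph{and} that the color that tile-equality pins at corresponding local heights is exactly the one $F$ selects. This is a purely local computation on a single $2\times 2$ block, controlled entirely by Lemmas \ref{ne} and \ref{roy}, but it is the crux because it is where the two distinct height normalizations of $x'$ and $y'$ must be reconciled.
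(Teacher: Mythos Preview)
Your plan is essentially the paper's own argument: locate $t=g_{x'}(2i,l)$ on an agreed block, use the tile equality $(x',\sigma^a c)_{i,j}=(y',\sigma^b d)_{i,j}$ to force both the local domino pattern and the color map on $\colorsD$ to coincide, and then transport via $F$. The one place where the two write-ups diverge is exactly the point you flag as the ``main obstacle'': the paper does not carry the offset $\Delta=g_{y'}(2i,2j)-g_{x'}(2i,2j)$ through the argument but instead asserts directly (equation~(\ref{meb})) that $g_{x'}(2i,\cdot)-g_{y'}(2i,\cdot)$ vanishes identically on each agreed block. Granting that, $F$ is linear of slope~$1$ (indeed the identity) on such intervals, integrality is automatic since the endpoints are lattice points (equation~(\ref{marygates})), and both the membership $F(t)\in\agreedcolors_{y',i,N}$ and the color identity $c_t=d_{F(t)}$ fall out of the definition of $\agree_{i,N}$ in one line.

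So your approach is not different in kind; it is the same argument, only with the key simplification left unconsummated. One small point: your line ``$F(t):=g_{y'}(2i,F(l))$'' overloads $F$ on both positions and colors and, as written, collapses to $F(t)=t$ by the very definition of $F$; this is in fact consistent with the paper's picture (where $F$ is the identity on agreed intervals), but you should state clearly which domain $F$ is acting on. Your appeals to Lemma~\ref{peter} and Definition~\ref{vancouver} are not needed here---the paper works entirely inside the single $2\times2$ block and uses only the definition of $\agree_{i,N}$.
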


\begin{proof}
By the definitions of $g_{x'}$, $g_{y'}$ and $\agree_{i,N}$,  we have that
\be \label{meb} g_{x'}(2i,*)-g_{y'}(2i,*)|_{[j,j+2]}=0\ee
and
\be g_{x'}^{-1}(2i,2j),g_{y'}^{-1}(2i,2j) \in \Z \label{marygates} \ee
for any $j \in \agree_{i,N}.$  By (\ref{meb}) $F$ is linear with slope 1 on any interval $I$ such that
$$g_{x'}(2i,I) \subset \agree_{i,N}+[0,2].$$  Combined with (\ref{marygates}) this implies that  $F(t)\in  \agreedcolors_{y',i,N}$ if  $t \in  \agreedcolors_{x',i,N}.$  The definition of $\agree_{i,N}$ also implies that
$c_{t}=d_{F(t)} .$
\end{proof}

\begin{pfoflem}{\ref{quadruple}}
If
\begin{equation} \label{ifb}
\bar d_{[-N,N]^2}(\mu|_{(\tilde x,c)},\mu|_{(\tilde y,d)})<.0001
\end{equation}
 then there exists $x'\in \tilde
x_N$ and $y'\in \tilde y_N$ and $a,b \in [-20N,20N]$ such that
\begin{equation} \label{dbargood}
\bar d_{[-N,N]^2}((x',\sigma^{a}c),(y',\sigma^{b}d))<.0001.
\end{equation}
Then we can find $i \in [-N,N]$ such that
\begin{equation} \label{dbargood2}
\bar d_{i\times[-N,N]}((x',\sigma^{a}c),(y',\sigma^{b}d))<.0001.
\end{equation}

Since (\ref{dbargood2}) is satisfied then
 $$|\agree_{i,N}|>(1-.0001)(2N+1).$$

 Then
choose the sequence $n_1,\dots,n_k$ to be elements of
$\agreedcolors_{x',i,N}$ in increasing order and chose the sequence
$m_1,\dots,m_k$ to be elements of $\agreedcolors_{y',i,N}$ in
increasing order.  Then by Lemma \ref{ninetysix} 
$F(n_j)=m_j$ and $c_{a+n_j}=d_{b+m_j}$ for all $j$.

$$k>(1-.0015)|\colors_{x',i,N}|$$ and
$$k>(1-.0015)|\colors_{y',i,N}|.$$ Then set
$$K=\max
(|\colors_{x',i,N}|,|\colors_{y',i,N}|).$$  Thus $K\geq N$.
 This subsequence shows that
$$\bar f_{K}(\sigma^{g_{x'}(2i,-2N)}c,\sigma^{g_{y'}(2i,-2N)}d)<.01.$$
\end{pfoflem}

\begin{theorem}
The colored domino tiling $(\dominotilings^*,\prob^*,T_{\text{left}}^*,T_{\text{down}}^*)$ is not isomorphic to a Bernoulli shift.
\end{theorem}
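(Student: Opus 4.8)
The plan is to argue by contradiction using the fact that a Bernoulli shift must be very weak Bernoulli, and to show that the rigidity of the coloring along the level sets of $g_x$ is incompatible with this property. Almost all of the work has already been packaged into two earlier results: Lemma \ref{quadruple}, which converts $\bar d$-closeness of two conditional measures into $\bar f$-closeness of the two underlying color sequences, and the large deviation estimate (\ref{fbar}), which says that two independent $\colorprob$-colorings are $\bar f$-far apart (even after bounded shifts and for any window $K>N$) with overwhelming probability. The theorem should then follow by feeding the very weak Bernoulli comparison into Lemma \ref{quadruple} and contradicting (\ref{fbar}).

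More precisely, suppose for contradiction that $(\dominotilings^*,\mu^*,T_{\text{left}}^*,T_{\text{down}}^*)$ is isomorphic to a Bernoulli shift. Then it is very weak Bernoulli (a standard consequence of Ornstein theory for $\Z^2$ actions): fixing $\epsilon=.00004$, there is an $N_0$ so that for every $N\geq N_0$ there is a set of outside configurations $(\tilde x,c)$ of measure at least $1-\epsilon$ for which the conditional law $\mu|_{(\tilde x,c)}$ on the box $[-N,N]^2$ lies within $\epsilon$ of the unconditional law in the $\bar d_{[-N,N]^2}$ metric. I would sample two outside configurations $(\tilde x,c)$ and $(\tilde y,d)$ independently from $\mu^*$; with probability at least $1-2\epsilon$ both are good, and the triangle inequality then gives
$$\bar d_{[-N,N]^2}\!\left(\mu|_{(\tilde x,c)},\mu|_{(\tilde y,d)}\right)<2\epsilon<.0001.$$
On this event Lemma \ref{quadruple} produces $a,b\in[-100N,100N]$ and $K>N$ with $\bar f_K(\sigma^{a}c,\sigma^{b}d)<.01$. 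Since the color coordinates of two independent $\mu^*$-samples are independent $\colorprob$-distributed sequences, this forces the event in (\ref{fbar}) to have probability at least $1-2\epsilon$, whereas (\ref{fbar}) bounds it by $e^{-rN}$. For $N$ large enough $e^{-rN}<1-2\epsilon$, which is a contradiction, so the colored domino tiling is not isomorphic to a Bernoulli shift.

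The main obstacle is the first step: correctly invoking the very weak Bernoulli characterization for a $\Z^2$ action and extracting from it the comparison of two conditional measures in exactly the form required by Lemma \ref{quadruple}. One must check that conditioning on the outside configuration $(\tilde x,c)_N$ genuinely exposes the full color sequence $c$ (so that $c$ and $d$ really are the independent Bernoulli sequences to which (\ref{fbar}) applies) and that the metric in the very weak Bernoulli statement coincides with the box metric $\bar d_{[-N,N]^2}$ of Lemma \ref{quadruple}. Once these identifications are made, the remaining bookkeeping — the union bound giving a good set of measure $1-2\epsilon$ and the comparison with $e^{-rN}$ — is routine. The conceptual content is that, although the height function fluctuates enough to give completely positive entropy, the colors are frozen along the level sets of $g_x$, so any $\bar d$-efficient matching is forced to align the two color sequences; independent colorings cannot be aligned, and this is precisely the obstruction to the very weak Bernoulli property.
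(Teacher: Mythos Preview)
Your proposal is correct and follows essentially the same route as the paper: both invoke a $\bar d$-characterization of Bernoulli (the paper cites \cite{dHS} directly for the integral of $\bar d_{[-N,N]^2}(\mu^*|_{(\tilde x,c)},\mu^*|_{(\tilde y,d)})$ tending to $0$, while you derive the same conclusion via very weak Bernoulli plus a triangle inequality), then apply Lemma~\ref{quadruple} and the estimate~(\ref{fbar}) to reach a contradiction. The difference is purely cosmetic.
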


\begin{proof}
By \cite{dHS} the colored domino tiling is isomorphic to a Bernoulli shift if and only if the integral of
$$d_{[-N,N]^2}(\mu^*|_{(\tilde x,c)},\mu^*|_{(\tilde y,d)})$$
over all $x,y,c$ and $d$ approaches 0 as $N$ goes to $\infty$.  But by the comment after Lemma \ref{fbarlemma} for most quadruples
$x,y,c$ and $d$
$$\bar f_K (\sigma^a c,\sigma^b d)>.01$$
for all $a,b$ and $K$ such that $-100N \leq a,b\leq 100N$ and $K>N$.  By Lemma \ref{quadruple} for those quadruples the integrand is bigger than .0001.  Thus the colored domino tiling is not isomorphic to a Bernoulli shift.
\end{proof}

 \begin{theorem}
 The colored domino tiling $(\dominotilings^*,T_{\text{left}}^*,T_{\text{down}}^*)$ is a subshift of finite type. The measure
 $\prob^*$ on $\dominotilings^*$ is a measure of maximal entropy for which the subshift
 has completely positive entropy
 but is not isomorphic to a Bernoulli shift.
 \end{theorem}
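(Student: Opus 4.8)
The plan is to assemble the theorem from the four results already proved, since each of its assertions has been established separately for the very system $(\dominotilings^*,\prob^*,T_{\text{left}}^*,T_{\text{down}}^*)$ under consideration. First I would observe that the subshift-of-finite-type assertion is exactly the content of Lemma~\ref{easter}: the map $M$ is a shift-invariant bijection from $\dominotilings^*$ onto $S$, the state space of the subshift of finite type built from the tile set $\tiles$, the alphabet $A$, and the adjacency rule of Section~\ref{subshift}. Because $M$ intertwines the shifts $T_{\text{left}}^*,T_{\text{down}}^*$ with the coordinate shifts on $A^{\Z^2}$, the colored domino tiling is topologically conjugate to an honest $\Z^2$ subshift of finite type, which gives the first claim.

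Next I would record that $\prob^*$ is a measure of maximal entropy. Lemma~\ref{topologicalentropy} computes the topological entropy of the colored domino shift to be $4h(\text{domino})$, and the discussion following it shows that $\prob^*=\prob\times\colorprob$ attains this value: pushing any measure of maximal entropy forward to $\dominotilings$ yields a measure of maximal entropy there, which by Theorem~\ref{dominoisbernoulli} must be $\prob$, and conditioned on the domino tiling the colors are forced to be uniform and independent, pinning the measure down to $\prob^*$. This gives the second claim.

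For the remaining two claims I would simply invoke the two main theorems already proved. The theorem of Section~\ref{isk} states precisely that $(\dominotilings^*,\prob^*,T_{\text{left}}^*,T_{\text{down}}^*)$ has completely positive entropy, and the theorem of Section~\ref{notbernoulli} states that the same system is not isomorphic to a Bernoulli shift. Both are proved with respect to $\prob^*$, so no translation is required.

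The point I would stress --- and the only place requiring care rather than new work --- is that these conclusions are genuinely statements about the subshift of finite type. Since $M$ carries $\mu^*$ on $\dominotilings^*$ to its pushforward (also denoted $\mu^*$) on $S$ and is a measure-preserving isomorphism of dynamical systems, completely positive entropy and the non-Bernoulli property transfer verbatim to $(S,\mu^*)$. Thus the subshift of finite type $S$, equipped with the maximal-entropy measure $\mu^*$, has completely positive entropy yet is not Bernoulli, which is the assertion of the theorem. There is no substantive obstacle remaining; the work lives entirely in the earlier lemmas and theorems, and this final statement is their synthesis.
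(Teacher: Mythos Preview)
Your proposal is correct and takes essentially the same approach as the paper: the paper's proof is the single sentence ``This is a combination of Lemma~\ref{topologicalentropy} and Theorems~\ref{isk} and~\ref{notbernoulli},'' and you have simply unpacked that synthesis, additionally citing Lemma~\ref{easter} to justify the subshift-of-finite-type claim that the paper leaves implicit.
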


\begin{proof}
This is a combination of Lemma
\ref{topologicalentropy} and Theorems \ref{isk} and
\ref{notbernoulli}\end{proof}

\noindent
{\bf Acknowledgements:}  I would like to thank Rick Kenyon, Bob Burton and Dan Rudolph for helpful conversations on this problem.  I would also like to thank an anonymous referee who had many helpful suggestions on how to improve the presentation.  This work was partially supported by NSF grant DMS-\#086024 and an AMS Centennial Fellowship.

\end{document}